\newtheorem{theorem}{Theorem}[section]
\newtheorem{prop}[theorem]{Proposition}
\newtheorem{cor}[theorem]{Corollary}
\let\oldtocsection=\tocsection
\let\oldtocsubsection=\tocsubsection
\renewcommand{\tocsection}[2]{\hspace{0em}\oldtocsection{#1}{#2}}
\renewcommand{\tocsubsection}[2]{\hspace{1em}\oldtocsubsection{#1}{#2}}
\title[Evaluating thin flat surfaces]{Evaluating thin flat surfaces}
\author{Mikhail Khovanov}
 \address{Department of Mathematics, Columbia University, New York, NY 10027, USA}
 \email{\href{mailto:khovanov@math.columbia.edu}{khovanov@math.columbia.edu}}
 \author{You Qi}
 \address{Department of Mathematics, University of Virginia, Charlottesville, VA 22904, USA}
 \email{\href{mailto:yq2dw@virginia.edu}{yq2dw@virginia.edu}}
 \author{Lev  Rozansky}
 \address{Department of Mathematics, University of North Carolina, Chapel Hill, NC 27599, USA}
 \email{\href{rozansky@math.unc.edu}{rozansky@math.unc.edu}}
\date{September 4, 2020}
\begin{document}

\def\R{\mathbb R}
\def\Q{\mathbb Q}
\def\Z{\mathbb Z}
\def\N{\mathbb N} 
\def\C{\mathbb C}
\def\S{\mathbb S}
\def\CP{\mathbb P}
\renewcommand\SS{\ensuremath{\mathbb{S}}}

\def\l{\lbrace}
\def\r{\rbrace}
\def\o{\otimes}
\def\lra{\longrightarrow}
\def\Ext{\mathrm{Ext}}
\def\mc{\mathcal}
\def\mf{\mathfrak} 
\def\uFr{\underline{\mathrm{Fr}}}
\def\mcC{\mathcal{C}}
\def\undb{\underline{b}}
\def\undc{\underline{c}}

\def\mfgl{\mathfrak{gl}}
\def\mfglN{\mathfrak{gl}_N} 
\def\Cat{\mathrm{Cat}}
\def\ovF{\overline{F}}
\def\ovb{\overline{b}}
\def\tr{{\sf tr}} 
\def\det{{\sf det }} 
\def\tral{\tr_{\alpha}}
\def\one{\mathbf{1}}   
\def\tZ{\widetilde{Z}}  
\def\wtP{\widetilde{P}}
\def\wtQ{\widetilde{Q}}
\def\ha{\widehat{\alpha}}

\def\lra{\longrightarrow}
\def\kk{\mathbf{k}}  

\def\gdim{\mathrm{gdim}}  
\def\rk{\mathrm{rk}}

\def\tfs{\mathrm{TFS}} 
\def\ktfs{\kk\tfs}  
\def\vtfs{\mathrm{VTFS}}  
\def\stfs{\mathrm{STFS}} 
\def\gtfs{\mathrm{TFS}} 
\def\dtfs{\mathrm{DTFS}} 
\def\udtfs{\underline{\dtfs}}  
\def\SS{\mathcal{C}} 
\def\vSS{\mathcal{V}\SS}    
\def\dSS{\mathcal{D}\SS}    
\def\sSS{\mathcal{S}\SS}  
\def\udSS{\underline{\dSS}}  

\def\tfsr{\tfs^{(r)}} 
\def\vtfsr{\vtfs^{(r)}}
\def\stfsr{\mathrm{STFS}^{(r)}}
\def\dtfsr{\mathrm{D}\tfsr}  
\def\udtfsr{\underline{\mathrm{DTFS}}^{(r)}_{\,\alpha}}

\def\undern{\mathbf{n}} 
\def\underm{\mathbf{m}}

\def\mcT{\mathcal{T}} 
\def\mcTcheck{\mcT^{\vee}} 
\def\Rec{\mathrm{Rec}} 

\def\Pa{\mathrm{Pa}}   
\def\Cob{\mathrm{Cob}} 
\def\Cobtwo{\Cob_2}   
\def\Kob{\mathrm{Kob}}
\def\PCobn{\mathrm{PCob}}  
\def\Cobal{\Cob_{\alpha}}  
\def\Cobalp{\Cob_{\alpha}'} 
\def\Kobal{\Kob_{\alpha}}   
\def\PCob{\mathrm{PCob}}
\def\PCobal{\PCob_{\alpha}}
\def\Kar{\mathrm{Kar}}   

\def\dmod{\mathrm{-mod}}   
\def\pmod{\mathrm{-pmod}}    

\newcommand{\brak}[1]{\ensuremath{\left\langle #1\right\rangle}}
\newcommand{\oplusop}[1]{{\mathop{\oplus}\limits_{#1}}}
\newcommand{\ang}[1]{\langle #1 \rangle } 
\newcommand{\bbn}[1]{\mathbb{B}^{#1}}

\newcommand{\mcA}{{\mathcal A}}
\newcommand{\cZ}{{\mathcal Z}}
\newcommand{\sq}{$\square$}
\newcommand{\bi}{\bar \imath}
\newcommand{\bj}{\bar \jmath}

\newcommand{\undn}{\mathbf{n}}
\newcommand{\undm}{\mathbf{m}}

\newcommand{\Hom}{\mbox{Hom}}
\newcommand{\Ind}{\mbox{Ind}}
\newcommand{\id}{\mbox{id}}
\newcommand{\Id}{\mbox{Id}}
\newcommand{\End}{\mathrm{End}}
\newcommand{\iHom}{\underline{\mbox{Hom}}}

\begin{abstract}
We consider recognizable evaluations for a suitable category of oriented two-dimensional cobordisms with corners between finite unions  of  intervals. We call such cobordisms  thin  flat  surfaces. An evaluation is  given by a power series  in two  variables. Recognizable evaluations correspond  to  series that are ratios of a  two-variable polynomial by the product of two  one-variable polynomials, one for each variable. They are also in a bijection with isomorphism  classes of commutative Frobenius algebras on two generators with a nondegenerate trace fixed. The latter algebras of  dimension n correspond to  points on the  dual tautological bundle on the Hilbert scheme of n points on the affine  plane, with  a certain divisor removed  from the bundle. A  recognizable evaluation gives  rise  to a functor  from the above cobordism  category of  thin  flat surfaces to the category of finite-dimensional  vector spaces. These functors may  be non-monoidal  in interesting cases. To a  recognizable evaluation we  also assign an analogue of the Deligne category and of its quotient by the ideal of  negligible morphisms. 
\end{abstract}

\maketitle
\tableofcontents

%
%

\section{Introduction}
Universal constructions of topological theories~\cite{BHMV,Kh1,RW} that are not necessarily multiplicative~\cite{FKNSWW} are interesting even in dimension two~\cite{Kh2,KS},  providing examples somewhat different from  commutative Frobenius algebras for the invariants of two-dimensional cobordisms. In this note we consider  the analogue of  the  latter construction for oriented  two-dimensional  cobordisms with corners. For simplicity we restrict to cobordisms between finite unions of  intervals;  boundary points of the intervals give rise to corners of   cobordisms. Furthermore, we require  that each connected component of a cobordism has non-empty  boundary, which is a natural  condition  when excluding cobordisms with corners that  have circles as some boundary components. 

Cobordisms that we consider can  be ``thinned" to  consist of ribbons glued  to  disks and can be depicted in the plane  as regular neighbourhoods of immersed  graphs, see Figure~\ref{fig1_2} below for an  example. For this  reason we refer to these  cobordisms as \emph{thin flat  surfaces} or \emph{tf-surfaces} throughout the paper. When viewed as a   morphism in the appropriate category $\tfs$ of thin flat cobordisms, a particular immersion of  the surface into  the plane  is  inessential,  and  morphisms are  equivalence classes of such  cobordisms modulo diffeomorphisms that fix the  boundary. 

\vspace{0.07in}

The category $\tfs$ admits an analogue of $\alpha$-evaluations from~\cite{Kh2,KS,KKO}. This  time closed connected morphisms $S$ (connected endomorphisms of the  unit object $0$, the empty union of  intervals) are  parametrized by two non-negative integers $(\ell,g)$, where $\ell+1$ is  the number of boundary  components of $S$  and $g$  is the genus. Assigning an  element $\alpha_{\ell,g}$ of  the ground field $\kk$  (or a ground commutative  ring $R$) to such a component and extending multiplicatively to disjoint unions gives an evaluation $\alpha$ on endomorphisms of the unit object $0$. Evaluation $\alpha$ can be conveniently encoded as power series 
\begin{equation}\label{eq_Z_rat_2}
    Z_{\alpha}(T_1,T_2) = \sum_{k,g\ge 0}\alpha_{k,g} T_1^k T_2^g, \ \ 
     \alpha=(\alpha_{k,g})_{k,g\in \Z_+}, \ \ \alpha_{k,g} \in \kk , 
\end{equation} 
where the degree of the first  variable $T_1$ counts  ``holes" in a  cobordism (a disk has no holes  and  an annulus has one hole) and $T_2$ keeps track of the genus.  

\vspace{0.1in} 

With $\alpha$ as above and $n\ge 0$, one can  define a bilinear form on 
a $\kk$-vector  space with a basis given by equivalence  classes of thin  flat  surfaces with $n$  boundary intervals.  The quotient by the kernel of the bilinear form is a vector space  $A_{\alpha}(n)$. 
The collection  of  quotient spaces $\{A_{\alpha}(n)\}_{n\ge 0}$ is what we refer to as \emph{the universal construction} for the category $\tfs$, given $\alpha$.

\vspace{0.1in} 

The spaces $A_{\alpha}(n)$ rarely satisfy the Atiyah factorization axiom, that is, the relation 
\begin{equation*}
A_{\alpha}(m+n)\cong A_{\alpha}(m)\otimes A_{\alpha}(n)
\end{equation*}  
does not  hold. From the quantum field theory (QFT) perspective, this violation may happen if the 2-dimensional QFT is embedded as a 2-dimensional defect inside a higher-dimensional QFT. 

\vspace{0.1in} 

It  is straightforward to see that $A_{\alpha}(n)$  is finite-dimensional for all  $n$ iff $A_{\alpha}(1)$ is finite-dimensional iff the series  (\ref{eq_Z_rat_2}) is \emph{recognizable} or \emph{rational} (terms from the control theory and the theory of noncommutative  power series). Recognizable power series in this case have the  form 
\begin{equation}\label{eq_recog_2}
    Z_{\alpha}(T_1,T_2) = \frac{P(T_1,T_2)}{Q_1(T_1)Q_2(T_2)},
\end{equation}
that is, $Z_{\alpha}$ can be  written  as a ratio of  a polynomial in $T_1,T_2$ and  two one-variable polynomials,  see Proposition~\ref{prop_recog_alpha} and Fliess~\cite{F}. 

Constructions of~\cite{KS} go through for the category of thin flat surfaces and any recognizable $\alpha$ as above. We define the category $\stfs_{\alpha}$ (skein thin flat surfaces) where homs are finite linear  combinations of cobordisms, closed components evaluate to coefficients of $\alpha$, and there are skein relations given by adding holes and handles to a component of a cobordism and equating to zero linear combinations corresponding to elements  of  the kernel ideal $I_{\alpha}\subset \kk[T_1,T_2]$ associated to $\alpha$ and also known  as \emph{the  syntactic ideal} of rational series $\alpha$. A  two variable polynomial $z=z(T_1,T_2)$ is in $I_{\alpha}$ iff $\alpha(zf)=0$ for any polynomial $f\in \kk[T_1,T_2]$, with $\alpha(T_1^{\ell}T_2^g)=\alpha_{\ell,g}$ extended to a linear map  $\kk[T_1,T_2]\stackrel{\alpha}{\lra}\kk$.

For the rest of the paper  we change our  terminology and call connected components of a  thin  flat surface that  have neither top nor  bottom  boundary intervals \emph{floating} components instead of \emph{closed} components, since they otherwise have  boundary, what we call \emph{side} boundary, that  is present inside the  cobordism but not at its top or bottom. This avoid possible confusion with the  usual  notion  of a closed surface. A non-empty thin flat  surface  is never closed in the latter sense.  

\vspace{0.07in}

The category  $\stfs_{\alpha}$ has finite-dimensional hom  spaces.  Taking the additive Karoubi envelope
of this category to form 
\begin{equation*}
    \dtfs_{\alpha} \ := \ \Kar(\stfs^{\oplus}_{\alpha})
\end{equation*}
gives an idempotent-complete $\kk$-linear rigid symmetric monoidal category $\dtfs_{\alpha}$ which is the analogue of the Deligne category~\cite{D,CO,EGNO} for $\tfs$ and recognizable series  $\alpha$  in two variables. 

\vspace{0.07in}

Once we pass to $\kk$-linear combinations of cobordisms,  and $\alpha$ is available to evaluate floating  cobordisms, there is a trace map on endomorphisms of any  object $n$. It  is given by closing each term in the linear  combination of tf-surfaces  describing the endomorphism via $n$ strips into a floating tf-surface and evaluating it via $\alpha$. Consequently, one can form the ideal  $J_{\alpha}$ of negligible morphisms~\cite{D,CO,EGNO,KS} and quotient the category by that ideal. 

We call the quotient category \emph{gligible quotient} to avoid the awkward-sounding word ``non-negligible quotient"  and mirroring the terminology from~\cite{KKO}. The gligible quotient $\gtfs_{\alpha}$ of the skein category  $\stfs_{\alpha}$ carries non-degenerate bilinear forms on its hom spaces and otherwise shares key properties of $\stfs_{\alpha}$: objects are  non-negative integers, category $\gtfs_{\alpha}$ is rigid symmetric  tensor, and the hom spaces  are finite-dimensional over $\kk$. 

Likewise, the Deligne category $\dtfs_{\alpha}$  has the gligible  quotient  $\udtfs_{\,\alpha}$ by the ideal of negligible  morphisms. 
The same category can be recovered as the  additive Karoubi  closure of  $\gtfs_{\alpha}$. 

Section~\ref{subsec_summary} and diagram (\ref{eq_seq_cd}) contain a summary of  these categories and key functors  relating them.

\vspace{0.07in}

Similar to~\cite{EO,EGNO,KKO}, it is natural to ask under what conditions will $\udtfs_{\,\alpha}$ be semisimple. Unlike~\cite{KKO,Kh2}, we do not work out any specific examples of these categories here and leave that to an interested reader or another time.

\vspace{0.07in}

Our evaluation ${\alpha}$ is encoded by a power series $Z_{\alpha}$ in two  variables  (\ref{eq_Z_rat_2}), and the recognizable series $Z_{\alpha}$  gives rise to a finite-codimension ideal $I_{\alpha}$ in $\kk[T_1,T_2]$, the largest ideal contained in the  hyperplane $\ker(\alpha)$. Such an ideal defines a point  on  the Hilbert scheme of the affine plane $\mathbb{A}^2$. We discuss the relation to the Hilbert  schemes in Section~\ref{sec_sample} and explain a bijection between recognizable power series with  the ideal $I_{\alpha}$ of codimension $k$ and points in the complement $\mcTcheck_k\setminus D_k$ of the dual tautological bundle $\mcTcheck_k$ on the Hilbert scheme and  a  suitable divisor $D_k$ on it. 

It is not  clear whether the appearance of the Hilbert scheme of  $\mathbb{A}^2$ is a bug or a feature. In Section~\ref{sec_modif} we explain two generalizations of our construction.  One of them involves ``coloring" side  boundary components of a thin flat surface into  $r$ colors. For the resulting  category,  recognizable series depends  on $r+1$ parameters (generalizing from $2$ parameters for $r=1$), and one would  get a generalization of our construction from the Hilbert scheme of  $\mathbb{A}^2$ to that of $\mathbb{A}^{r+1}$, with the appropriate divisor removed from the dual tautological bundle in both cases. Of course, the Hilbert scheme has vastly different properties and  uses in the case of  algebraic surfaces versus higher dimensional varieties.

\vspace{0.07in}

The other  generalization considered in that section is given by extending the $\tfs$ (thin flat surfaces) category by allowing  closed components and circles  as boundaries. This  corresponds to the usual  category 
of two-dimensional oriented cobordisms with boundary and  corners studied in~\cite{MS,LP,C,SP} and  other  papers. Objects  of that category are finite disjoint unions of intervals and circles. 
We  briefly touch on this generalization  and  explain encoding of recognizable series  via certain  rational power series in this case  as well. 

\vspace{0.07in}

Relations between Frobenius algebras, recognizable  power series, codes and two-dimensional TFTs are considered in Friedrich~\cite{Fr}, which is quite close in  spirit  to  this paper. 

A possible relation between moduli spaces of $SU(m)$  instantons on $\R^4$ (the Hilbert scheme of $\C^2$ corresponds to $U(1)$ case) and control theory is explored  in~\cite{H,S} and the  follow-up papers. We do not know how  to connect it to the constructions in the present paper. 

\vspace{0.1in} 

{\bf Acknowledgments.} M.K. was partially supported by the NSF grant  DMS-1807425 while working on this paper. Y. Q. was partially supported by the NSF grant DMS-1947532.
L.R. was partially supported by the NSF grant DMS-1760578.

%
%

\section{The category of thin flat surfaces}


\subsection{Category \texorpdfstring{$\tfs$}{TFS}} 

We introduce the category $\tfs$ of \emph{thin  flat surfaces}. Its objects are non-negative integers $n\in \Z_+=\{0,1,2,\dots\}$. An object is represented by $n$  intervals $I_1,\dots, I_n$ placed along the  $x$-axis in the  $xy$-plane. A morphism from $n$ to  $m$ is a ``thin'' surface   $S$ immersed  in $\R\times [0,1]$ connecting $n$ intervals on the line $\R^2\times \{0\}$ with  $m$ intervals on the line $\R^2\times \{1\}$. The immersion  map $S\lra \R^2\times [0,1]$ is a local diffeomorphism, but the image of $S$ may have overlaps, that can be thought of as virtual overlaps and ignored. The surface $S$ inherits an orientation from its immersion into $\R\times [0.1]$. Restricting  to the complement of the boundary of $S$, the immersion is open. 

Alternatively, the immersion can be perturbed to an embedding of $S$ into $\R^2\times [0,1]$ by turning overlaps into over- and under-crossings of strips of a surface. This can be done just for  aesthetic  purposes, and whether one chooses  an over- or  an under-crossing  does not matter  for the morphism associated to the  surface. 

The boundary of  $S$ consists of  several circles  (at  least one circle unless $S$ is the  empty surface) and decomposes  into $n+m$ disjoint intervals that  constitute \emph{horizontal} boundary $\partial_h S$ and $n+m$ intervals and  some  number of circles that constitute \emph{side}, or \emph{vertical}, or \emph{inner} boundary $\partial_v S$ of $S$:
\begin{equation*}
    \partial S = \partial_h S \cup \partial_v  S.
\end{equation*}
Horizontal intervals that  constitute $\partial_h S$  are the intersections of $S$ with $\R\times\{0,1\} \subset\R\times [0,1]$. Vertical boundary $\partial_v S$  is  the  closure of  the intersection of  $\partial S$ with $\R\times (0,1)$. The intersection  $\partial_h S \cap \partial_v S$ consists of $2(n+m)$ boundary points of  the horizontal intervals. These are also the \emph{corners} of the  surface $S$. 

In the graphical depicitions of thin flat surfaces below, we will draw horizontal boundary segments as red intervals, and vertical boundary components as green arcs for better visualization (Figure~\ref{fig1_1} and Figure~\ref{fig1_2} right),  but the figures can also be  viewed and carry full  information  in greyscale. Starting from Figure~\ref{fig1_2}, we  depict tf-surfaces in light aquamarine.  

\begin{figure}[h]
\psfrag{1}{$1$}
\psfrag{2}{$2$}
\psfrag{3}{$3$}
\psfrag{m}{$m$}
\psfrag{n}{$n$}
\begin{center}
\includegraphics[scale=0.35]{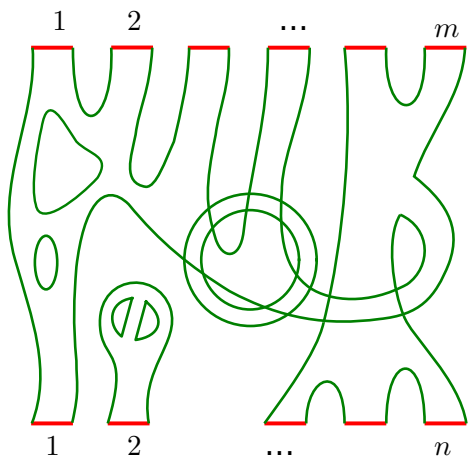}
\caption{A thin  flat surface  in $\R\times[0,1]$.}
\label{fig1_1}
\end{center}
\end{figure}

Another way to describe $S$  is to  immerse a  finite unoriented graph $\Gamma$, possibly with multiple edges and loops, into  the strip $\R\times [0,1]$, via the immersion $\jmath: \Gamma \lra \R\times [0,1]$. The graph $\Gamma$ may have several boundary vertices $v$ of valency $1$ such  that  $\jmath(v)\in \R\times\{0,1\}   $.  The  remaining  vertices are mapped inside the strip. The immersion is disjoint on vertices. Edges of $\Gamma$ may intersect in $\R\times [0,1]$. We consider these virtual intersections and not vertices. An example of $\Gamma$ and $\jmath$ is  shown in Figure~\ref{fig1_2} left. 

\begin{figure}[h]
\psfrag{1}{$1$}
\psfrag{2}{$2$}
\psfrag{3}{$3$}
\psfrag{m}{$m$}
\psfrag{n}{$n$}
\begin{center}
\includegraphics[scale=0.70]{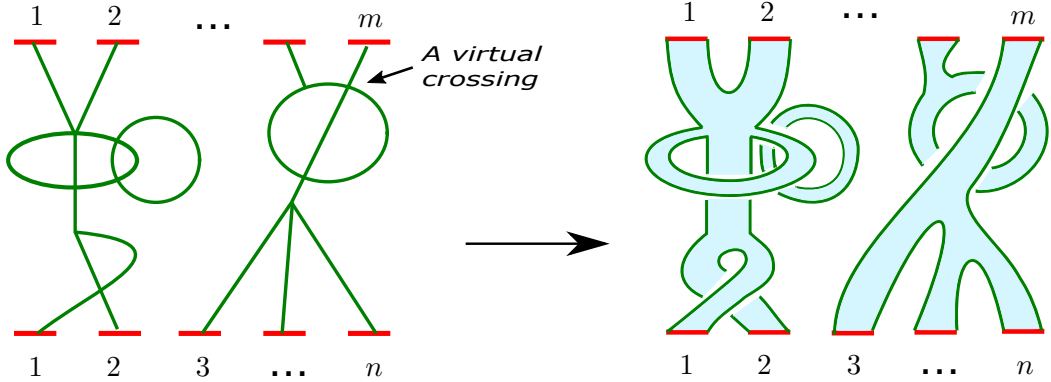}
\caption{An immersed graph $\Gamma$ in $\R\times [0,1]$ and associated thin flat surface $N(\Gamma,\jmath)$.}
\label{fig1_2}
\end{center}
\end{figure}

Taking a regular neighbourhood $N(\Gamma,\jmath)$ of  $\Gamma$  under $\jmath$, locally  in $\Gamma$,  results in a thin flat surface  $N(\Gamma)$.  Vice versa, any thin flat surface  $S$  can be deformed  to  the surface $N(\Gamma)$  for some $\Gamma$.

Take a thin flat surface $S$ and forget the embedding into $\R\times [0,1]$, only remembering boundary intervals and their order, on both top and bottom lines. In this way we view $S$ as a cobordism between ordered collections of oriented intervals (induced by the  orientation  of $\R$, say from left to right). The cobordism $S$ has corners (unless $n=m=0$) and two types  of boundary, as discussed. By definition, two cobordisms $S_1,S_2$ represent the same morphism if  they are diffeomorphic rel horizontal boundary, that is,  keeping all horizontal  boundary points fixed. 

The category $\tfs$ is symmetric monoidal, and a  possible set of  generating  morphisms is shown  in Figure~\ref{fig1_5}.  We have included the  identity morphism $\id_1$ into the Figure to emphasize that the  identity morphism $\id_n$ is represented by the surface which is the direct product of the   disjoint union of $n$ intervals (representing object $n$) and $[0,1]$. The permutation morphism $P$ of $2=1\otimes  1$ , shown on the right, is part of the symmetric  monoidal structure on  $\tfs$ and squares  to the  identity.

\begin{figure}[h]
\begin{center}
\psfrag{i}{$\iota$}
\psfrag{E}{$\epsilon$}
\psfrag{M}{$m$}
\psfrag{D}{$\Delta$}
\psfrag{Id}{$\mathrm{id}_1$}
\psfrag{P}{$P$}
\includegraphics[scale=0.75]{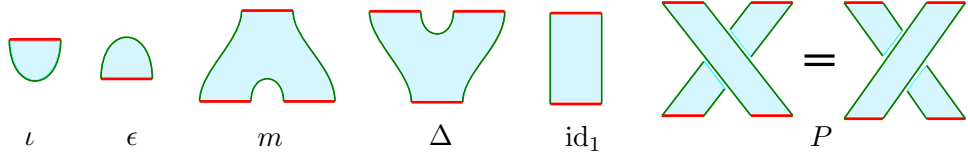}
\caption{A set  of  generating morphisms.  From left to right: $\iota, \epsilon, m, \Delta$ are morphisms from $0$ to $1$, from $1$ to $0$, from $2$ to  $1$ and  from $1$ to $2$, respectively. The rightmost morphism $P$ is the permutation morphism on  $1\otimes 1= 2$. Identity morphism $\id_1$ of object $1$ is  shown for completeness.}
\label{fig1_3}
\end{center}
\end{figure}

The elements $\iota,\epsilon,m,\Delta,P$
constitute a set  of monoidal generators  of $\tfs$.  Together with the  identity morphism  $\id_1$ they can be used to build any morphism in $\tfs$, via  horizontal and vertical  compositions. In  particular, from these generators we can build the  self-duality morphisms for the object $1$, see Figure~\ref{fig1_4}. 

\begin{figure}[h]
\begin{center}
\includegraphics[scale=0.4]{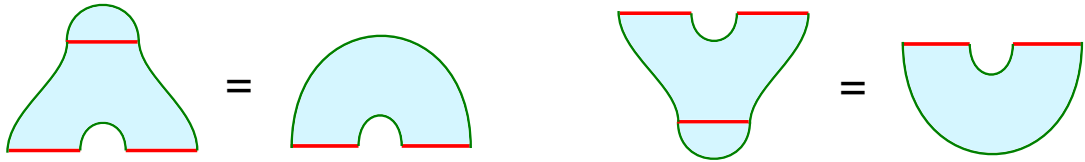}
\caption{Self-duality morphisms $\epsilon\,  m:1\otimes  1 \lra 0$ and  $\Delta\,\iota: 0 \lra 1\otimes 1$ for the  object $1$.}
\label{fig1_4}
\end{center}
\end{figure}

Some relations in the  category $\tfs$ are shown in Figure~\ref{fig1_5}. 

\begin{figure}[h]
\begin{center}
\includegraphics[scale=0.7]{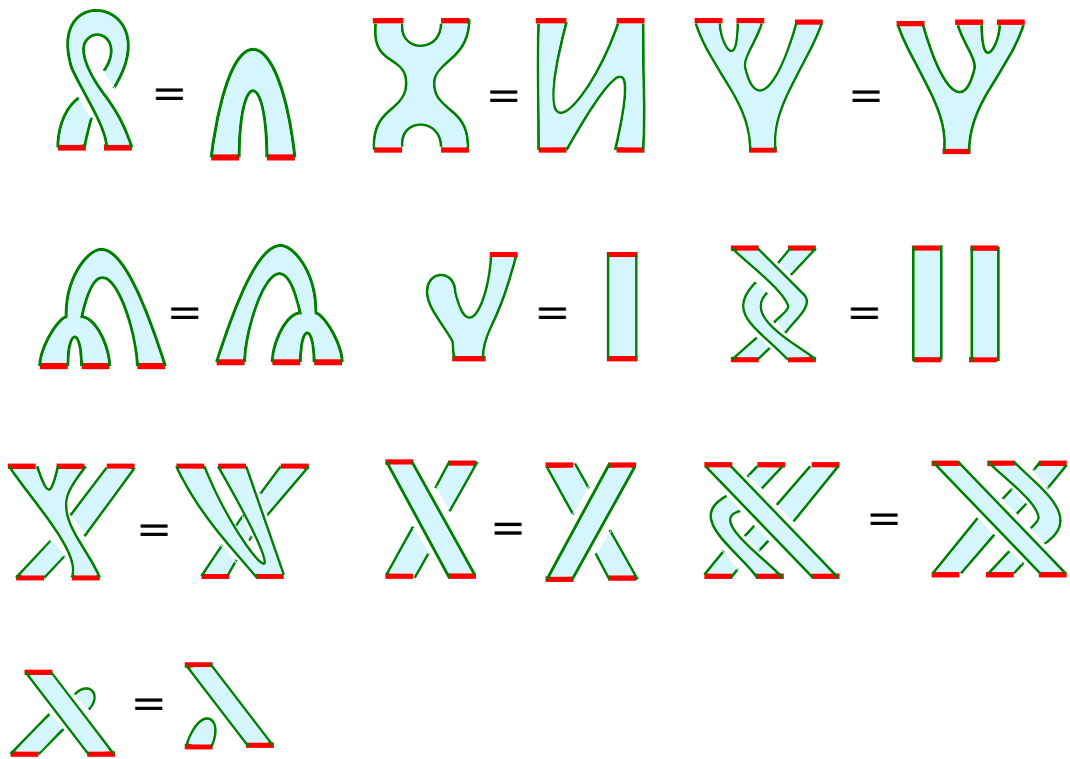}
\caption{Some relations  in $\tfs$.}
\label{fig1_5}
\end{center}
\end{figure}

We call a surface $S$ representing a morphism  from $n$  to $m$  in $\tfs$ a  \emph{thin flat cobordism}   from $n$ to $m$. A thin flat  cobordism $S$  is  a  disjoint  union of  its connected  components $S_1,  \dots,S_k$. Consider one such component $S'$.  It necessarily has non-empty boundary, and  we  can assign to $S'$ non-negative integers $\ell,g$, where $\ell+ 1$ is the number of boundary components and  $g\ge  0$ is the  genus of $S'$. The surface  $S'$ carries an orientation, inherited  via an  immersion from the orientation of the plane. 

We will also call a thin flat surface  a  \emph{tf-surface} and, when viewed as a cobordism, a \emph{tf-cobordism}. 

\vspace{0.1in} 

The morphisms 
\begin{equation*}
  \iota: 0 \lra  1, \ \epsilon: 1\lra 0, \ 
  m : 1\otimes 1 \lra 1, \ \Delta: 1 \lra 1\otimes 1
\end{equation*}
and relations on them show that the object $1$ is a symmetric Frobenius  algebra object in $\tfs$ (top left relation in Figure~\ref{fig1_5} shows that the trace map is  symmetric). It's not a commutative algebra object, since the two morphisms in Figure~\ref{fig1_5_1} left are not equal in $\tfs$.

\begin{figure}[h]
\begin{center}
\includegraphics[scale=0.7]{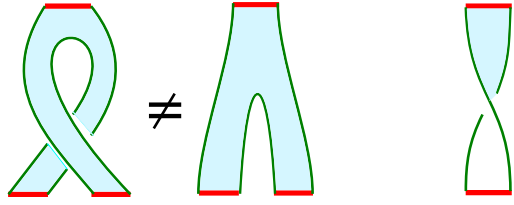}
\caption{Left: object $1$ is not commutative Frobenius. That  the two diagrams on the left  are  not diffeomorphic rel horizontal boundary can be seen easily by examining the matchings on the six corner points in  each diagram provided  by side boundaries. The two matchings of the six points are different, a sufficient condition for the two cobordisms not to be diffeomorphic rel boundary. Right: a diagram that's not a morphism in $\tfs$.}
\label{fig1_5_1}
\end{center}
\end{figure}


\subsection{Classification of thin flat surfaces}

By a \emph{closed} or \emph{floating} tf-surface  $S$ we mean one without horizontal boundary. A floating tf-surface necessarily has side boundary, unless it is the empty surface. Diffeomorphism equivalence  classes of floating   tf-surfaces are in a bijection with  endomorphisms of the object  $0$  of  $\tfs$. Such a surface is a disjoint  union of  its  connected  components, and a component is uniquely determined by its pair $(\ell+1,g)$, $\ell,g\in \Z_+$, the number of boundary components and the genus, respectively.  Any  such pair is realized  by some surface,  since pairs $(0+1,0)$, $(1+1,0)$, and  $(0+1,1)$ are  realized by a disk, an annulus, a flat  punctured torus, see  Figure~\ref{fig1_6}, and  taking band-connected  sum of  surfaces  with invariants  $(\ell_1+1,g_1),$ $(\ell_2+1,g_2)$ yields a  surface with the invariant $(\ell_1+\ell_2+1,g_1+g_2)$.  
Choose a  closed connected tf-surface $S_{\ell+1,g}$, one for each value of these parameters.

\begin{figure}[h]
\begin{center}
\includegraphics[scale=0.5]{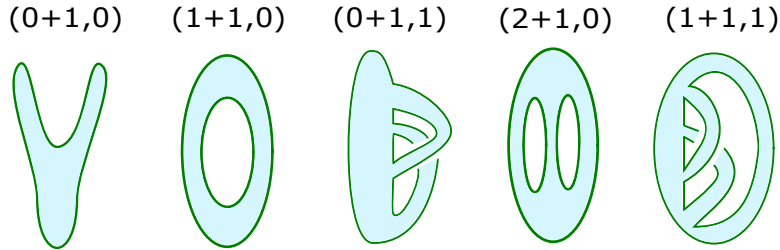}
\caption{Examples of closed  connected tf-surfaces $S_{\ell+1,g}$ for small values of  $\ell$ and  $g$. We explicitly write $\ell+1$ to remember that a surface always have at  least one  boundary component.} 
\label{fig1_6}
\end{center}
\end{figure}

Connected morphisms from  $0$ to $0$ in $\tfs$ are in a bijection with $S_{\ell+1,g}$  as above. Endomorphisms of $0$ in  $\tfs$ is  a free commutative monoid  on  generators $S_{\ell+1,g}$, over all $\ell,g\in \Z_+$, 
\begin{equation*}
    \End_{\tfs}(0) \cong  \langle S_{\ell+1,g}\rangle_{\ell,g\ge 0}. 
\end{equation*}
An element $a \in \End_{\tfs}(0)$ has a unique presentation as a finite product of $S_{\ell+1,g}$'s with positive integer multiplicities, 
\begin{equation*} 
 a  = \prod_{i=1}^k S_{\ell_i+1,g_i}^{r_i}, \ r_i\in \{1,2,\dots\}. 
\end{equation*}
Consider a tf-surface $S$ describing a morphism  from $n$ to $m$ in $\tfs$. It may have some \emph{floating} connected components, that is, those that are disjoint from the  horizontal boundary of $S$.  Each of these components is  homeomorphic  to  $S_{\ell+1,g}$ as above for a unique $\ell,g$. Components of $S$ that have non-empty horizontal boundary are called \emph{viewable} or \emph{visible} components. Any component of $S$ is either \emph{floating} or \emph{viewable}.  We call $S$  \emph{viewable} if it  has no floating components. The empty cobordism is viewable. 

\vspace{0.1in}

The commutative monoid $\End_{\tfs}(0)$ acts on the  set $\Hom_{\tfs}(n,m)$ by taking a cobordism  to its disjoint union with  a  floating cobordism. 
Any morphism $S\in \Hom_{\tfs}(n,m)$ has a unique presentation $S=S_0 \cdot S_1$ where $S_0\in \End_{\tfs}(0)$, $S_1$ is a viewable cobordism in $\Hom_{\tfs}(n,m)$ and dot $\cdot$ denotes the monoid action. In particular, $\Hom_{\tfs}(n,m)$ is a free $\End_{\tfs}(0)$-set with a  ``basis'' of viewable cobordisms. 

\vspace{0.1in} 

Let us specialize to viewable cobordisms $S$. All connected  components of $S$ are viewable and determine a set-theoretic partition of $n+m$ horizontal boundary intervals of $S$. Let us label these boundary intervals from left to right by $1,2,\dots, n$ for the bottom  intervals and $1',\dots, m'$ for the top intervals. 

Each viewable component contains a non-empty subset of this  set of intervals and together  viewable components give a decomposition $\lambda$ of this set  into disjoint sets. We denote  by $D^m_n$  the set of partitions  of these  $n+m$ intervals, so that $\lambda\in D^m_n$. 
To further understand the structure of morphisms, we restrict to the case of connected $S$, thus  
a surface  with one viewable connected component. All horizontal  intervals are in $S$. 

\vspace{0.1in} 

The surface $S$ and its horizontal  boundary  segments inherit orientation from $\R\times [0,1]$ and from induced orientations of the  top and bottom boundary of   $\R\times [0,1]$, 
see  Figure~\ref{fig1_7}. 

\begin{figure}[h]
\begin{center}
\psfrag{1}{$1$}
\psfrag{2}{$2$}
\psfrag{n}{$n$}
\psfrag{1'}{$1^\prime$}
\psfrag{m'}{$m^\prime$}
\psfrag{d1}{$\partial_1$}
\psfrag{d0}{$-\partial_0$}
\psfrag{R01}{$\R\times [0,1]$}
\psfrag{S}{$S$}
\includegraphics[scale=0.75]{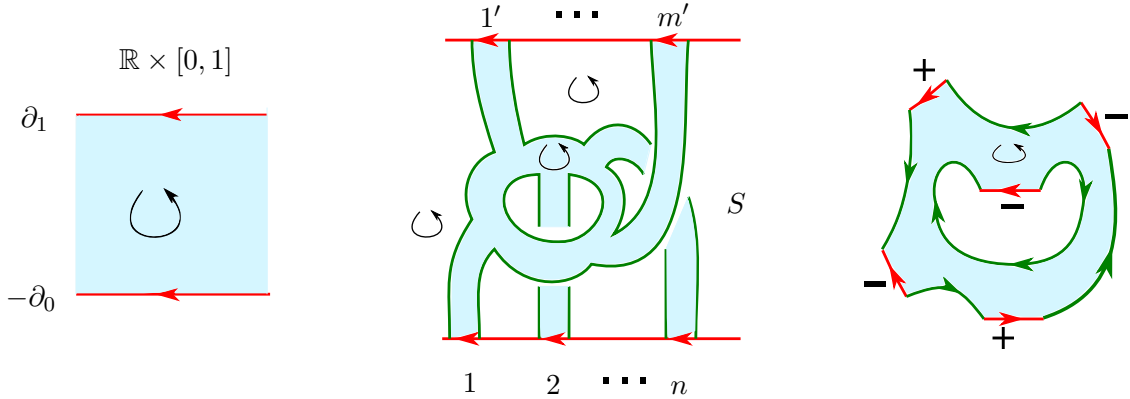}
\caption{Orientation convention for $\R\times[0,1]$, its  top and bottom boundary, surface $S$ and  its  horizontal  and  side boundary.}
\label{fig1_7}
\end{center}
\end{figure}

We use the convention of reversing  the orientation on the source (bottom)  part  of the boundary of a cobordism, see Figure~\ref{fig1_7}. Consequently, bottom intervals $I_1,\dots, I_n$  in $\partial  S$ are oppositely oriented  from the rest of the  boundary, while top intervals $I_{1'},\dots, I_{m'}$ are oriented compatibly with the side boundary orientations, inherited  from that of $S$ and in turn inherited from the orientation of $\R\times [0,1]$. In  Figure~\ref{fig1_7} right we shrank ``tentacles'' of  $S$  into the ``core'' of  $S$ to  make it easier  to see compatible and reverse orientations of the horizontal boundary segments of  $S$. 

\vspace{0.1in} 

We can now classify isomorphism classes of connected tf-cobordisms $S$ from $n$ to  $m$. Such a cobordism has $\ell+1$ boundary circles  and genus  $g$. On  $\ell+1$ boundary circles choose $n+m$ non-overlapping intervals and label them $1,\dots, n, 1',\dots, m'$. Choose an orientation of the interval $1'$ or, if $m=0$, orientation of interval  $1$. 

The orientation of the interval $1'$  induces an orientation of that  boundary component of $S$  and hence of $S$ itself. One then  gets induced orientations for all boundary components  of $S$. Horizontal parts  of $\partial  S$ for the intervals $2',\dots,m'$  are then oriented  compatibly  with the boundary, while  those corresponding  to the intervals  $1,\dots, n$ in  the opposite way from that for the boundary. 

Horizontal intervals  on the $\ell+1$ boundary components determine a partition  of 
\begin{equation*}
\N^m_n \ := \ \{1,\dots,n,1',\dots ,m'\}
\end{equation*} into $\ell+1$ disjoint  subsets, possibly with some  subsets  empty. Orientations of boundary components induce a cyclic order on elements of  each subset, where one goes along a component in the  direction  of its orientation and records horizontal intervals that  one encounters. We call an instance of this data a locally cyclic partition of $\N^m_n$ together with a choice of genus $g\ge 0$. Denote the  set of locally cyclic partitions of $\N^m_n$ by $D^m_{n,cyc}$ and by $D^m_{n,cyc}(\ell)$ if the number  of components is fixed to be $\ell+1$. This  time, empty components are allowed.  They correspond to components of  $\partial S$ disjoint from the  boundary $\R\times \{0,1\}$ of the strip. We have
\begin{equation*}
    D^m_{n,cyc} = \bigsqcup_{\ell\ge 0} d^m_{n,cyc}(\ell). 
\end{equation*}

For the example in Figure~\ref{fig1_7} we have  $n=3$, $m=2$, the  set of horizontal intervals is $\{1,2,3,1',2'\}$, there  are  two components  ($\ell=1$), and the  cyclic orders  are 
$(1',1,2',3)$  and  $(2)$. 

\begin{figure}[h]
\begin{center}
\psfrag{1}{$1$}
\psfrag{2}{$2$}
\psfrag{3}{$3$}
\psfrag{4}{$4$}
\psfrag{2'}{$2^\prime$}
\psfrag{3'}{$3^\prime$}
\psfrag{1'}{$1^\prime$}
\includegraphics[scale=0.7]{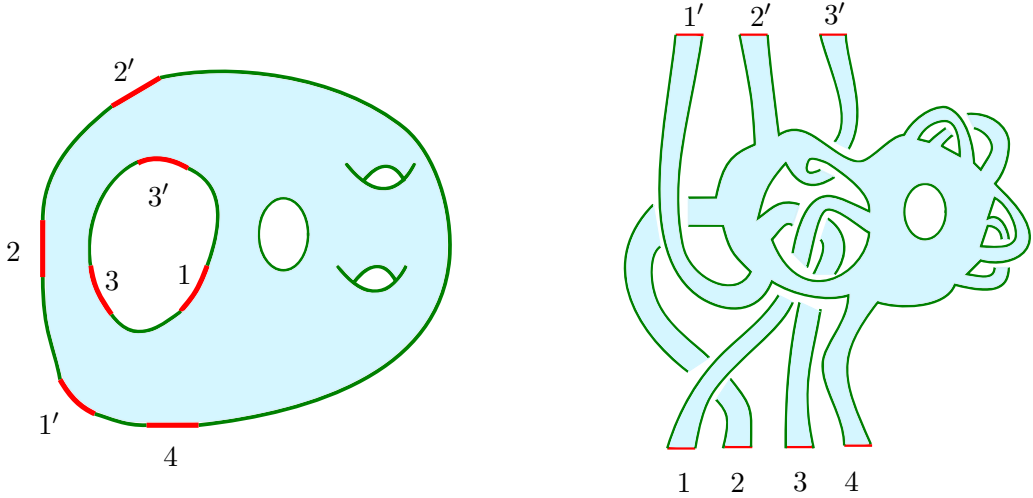}
\caption{ Left:  converting the partition and genus data into a surface with  boundary and labelled  edges on  the boundary. One boundary component (inner right)  does  not carry labelled edges, since  the partition contains one copy  of   the empty set.  Genus  two is indicated by schematically showing two handles. Right: stretching out labelled  edges into corresponding horizontal intervals to produce a  morphism in   $\tfs$.}
\label{fig1_8}
\end{center}
\end{figure}

Vice versa, suppose given $(\ell,g)$ as above, a locally cyclic partition $\lambda\in D^m_{n,cyc}(\ell)$ of $\N^m_n$  into  $\ell+1$ subsets, possibly with some subsets empty, with a cyclic order on each subset. To such data we can assign a connected thin flat surface $S(\lambda,g)$ of  genus $g$ with the horizontal boundary these $n+m$ intervals, $\ell+1$ boundary components, and horizontal  intervals placed according to  the cyclic  order for the subset along each component.  

For another  example, for $n=4$, $m=3$, the partition $\{(1',4,2',2),(3',1,3),()\}$, which includes one copy  of the  empty set, with cyclic orders as indicated and genus $g=2$  
the resulting tf-surface is  shown 
 in Figure~\ref{fig1_8} right. 

\vspace{0.1in}

This bijection between connected  morphisms from $n$ to  $m$ and elements of the set $D^m_{n,cyc}\times\Z_+$ leads  to  a  classification of morphisms in  $\tfs$.
An  arbitrary morphism $S\in \Hom_{\tfs}(n,m)$ is the  union  of the viewable subcobordism of  $S$ and  the floating subcobordism.  The latter are classified by elements of  $\Hom_{\tfs}(0,0)$ and admit  a very explicit  description, via pairs $(\ell,g)$ of the number of circles minus one and the genus  of  each  connected  component. The viewable subcobordism $S'$ of $S$ determines a  partition of $\N^m_n$ by with the set of horizontal  intervals for each component of $S'$ being a part of that  partition. Each part of  this partition is non-empty. 

Next, for each part of the  partition, remove the connected components of $S'$ for all other  parts, downsizing to just  one  component  $S''$. Relabel the horizontal intervals for $S''$ into $1,2,\dots, n^{\prime \prime}$ and  $1,2,\dots, m^{\prime\prime}$. Then such components  $S''$ are classified by data as above: a locally cyclic partition of $\N^{m''}_{n''}$ (possibly with empty subsets included) and a choice of  genus  $g\ge 0$.  

Putting the steps of this algorithm together gives a classification of  morphisms from $n$ to $m$ in $\tfs$. 

\vspace{0.1in}


\subsection{Endomorphisms  of 
\texorpdfstring{$1$}{1} and  homs  between \texorpdfstring{$0$}{0} 
and \texorpdfstring{$1$}{1} 
in \texorpdfstring{$\tfs$}{TFS}}

$\quad$
\vspace{0.1in} 

The category $\tfs$ is rigid symmetric monoidal, with the unit object $0$ and  the generating self-dual object $1$, with all objects being tensor powers  of the generating object, $n=1^{\otimes n}$. 

In the rest of this section, since we only consider the category $\tfs$, we may write  $\Hom(n,m)$ instead of $\Hom_{\tfs}(n,m)$, $\End(n)$ instead of $\End_{\tfs}(n)$, etc. 

\vspace{0.1in} 

{\it Connected endomorphisms of $1$:}
Endomorphisms $\End(1)=\End_{\tfs}(1)$  of the object $1$ in the category $\tfs$ constitute  a monoid. Consider the submonoid $\End^c(1)$ of $\End(1)$ that  consists of  connected endomorphisms of $1$. 
Define endomorphisms $b_1,b_2,b_3\in  \End^c(1)$ via diagrams in  Figure~\ref{fig2_1}. 

\begin{figure}[h]
\begin{center}
\psfrag{b1}{$b_1$}
\psfrag{b2}{$b_2$}
\psfrag{b3}{$b_3$}
\includegraphics[scale=0.75]{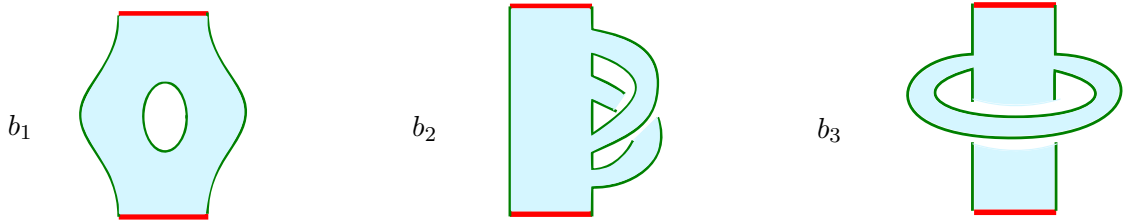}
\caption{Endomorphisms $b_1,b_2,b_3$.}
\label{fig2_1}
\end{center}
\end{figure}

Note that $b_2$ has equivalent presentations, as shown in Figure~\ref{fig2_2}. 
The last diagram is not a  tf-surface, but describes a diffeomorphism   class of  one (rel boundary). The tf-cobordism  $b_2$ has genus one and one  boundary component, with two horizontal segments labelled $1$ and  $1'$ on it, which uniquely determines  it as  an  element of  $\End(1)$.  

\begin{figure}[h]
\begin{center}
\includegraphics[scale=0.7]{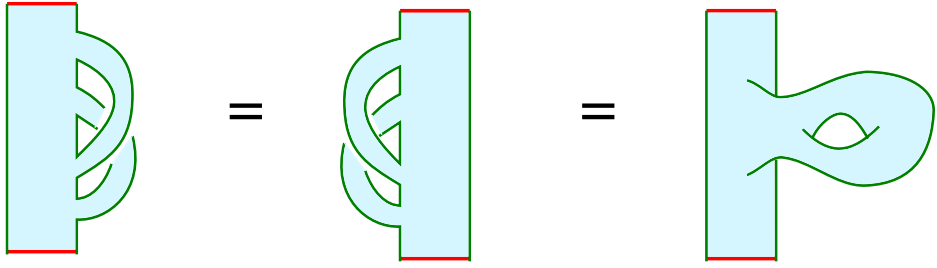}
\caption{Presentations of $b_2$. The diagram  on the right is  not a thin flat presentation but shows a cobordism that can be deformed to a diagram in $\tfs.$ }
\label{fig2_2}
\end{center}
\end{figure}

We refer to $b_1$ as the ``hole'' cobordism, $b_2$ as the ``handle'' cobordism, $b_3$ as the ``cross'' cobordism. 

\begin{prop} The endomorphisms $b_1,b_2,b_3\in\End_{\tfs}(1)$ pairwise commute: 
 \begin{equation*}
        b_1 b_2 = b_2 b_1,\  b_1 b_3 = b_3 b_1, \ b_2  b_3  = b_3 b_2.
    \end{equation*}
\end{prop}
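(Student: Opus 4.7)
The plan is to use the classification of connected tf-cobordisms established in the previous subsection: a connected morphism from $n$ to $m$ in $\tfs$ is determined up to equivalence by its genus $g\ge 0$ together with a locally cyclic partition of $\N^m_n$. Since $n=m=1$ here, we have $\N^1_1=\{1,1'\}$, and all that is needed is to compute, for each composition, the number of boundary components, the genus, and the induced locally cyclic partition of $\{1,1'\}$, and then to observe that $b_ib_j$ and $b_jb_i$ yield the same triple of invariants.

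First I would read off the invariants of the three generators from their diagrams: the hole cobordism $b_1$ is an annulus with both horizontal intervals on its outer boundary circle, giving $\ell+1=2$, $g=0$, partition $\{(1,1'),\emptyset\}$; the handle cobordism $b_2$ is a once-punctured torus with both intervals on its single boundary circle, giving $\ell+1=1$, $g=1$, partition $\{(1,1')\}$; and the cross cobordism $b_3$ is a genus-zero surface with two boundary circles, interval $1$ on one and interval $1'$ on the other, giving $\ell+1=2$, $g=0$, partition $\{(1),(1')\}$.

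Next, for each composition $b_i\circ b_j$, I would track what happens when the bottom interval of $b_i$ (the copy of $1$) is glued to the top interval of $b_j$ (the copy of $1'$). Since both factors are connected, their union along an arc remains connected, and the two boundary circles that contain the glued arcs merge into a single new circle; the Euler characteristic formula $\chi(F\cup_A G)=\chi(F)+\chi(G)-1$ shows that the total genus is additive. Running through the three cases, one finds: $b_1b_2$ and $b_2b_1$ each produce a genus-$1$ connected surface with two boundary components and partition $\{(1,1'),\emptyset\}$; $b_1b_3$ and $b_3b_1$ each produce a genus-$0$ connected surface with three boundary components and partition $\{(1),(1'),\emptyset\}$; $b_2b_3$ and $b_3b_2$ each produce a genus-$1$ connected surface with two boundary components and partition $\{(1),(1')\}$. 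Invoking the classification then gives $b_ib_j=b_jb_i$ in each case.

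The main obstacle is purely bookkeeping: making sure that after identifying the middle arc, the two boundary circles incident to that arc really do merge (rather than split) into one, and that the leftover ``empty'' boundary components contributed by $b_1$ or $b_3$ end up in the same slot of the partition regardless of stacking order. Both facts follow from the connectedness of the factors and the fact that the intervals $1$ and $1'$ remaining in the composition come from the bottom and top factors respectively, in a way symmetric under swapping the order of composition. No subtlety from the cyclic orders arises because each boundary circle in $b_ib_j$ contains at most two horizontal intervals, for which the cyclic order carries no information beyond the underlying set.
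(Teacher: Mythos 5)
Your proof is correct, but it takes a genuinely different route from the paper. The paper's argument is a one-liner: composing with $b_1$ (on either side) just inserts an extra hole into a connected cobordism, composing with $b_2$ just inserts an extra handle, and since the other factor is connected it does not matter \emph{where} the hole or handle is inserted --- so the two orders of composition produce diffeomorphic surfaces. This argument is short, conceptual, and requires no appeal to the classification theorem; it also explains at a glance why $b_3$ commutes with $b_1$ and $b_2$ (each of $b_1b_3$, $b_3b_1$ is ``$b_3$ with an extra hole''). Your proof instead invokes the classification of connected tf-cobordisms from $n$ to $m$ by the triple (number of boundary circles, genus, locally cyclic partition of $\N^m_n$), reads off these invariants for each of $b_1,b_2,b_3$, and then tracks them through all six compositions using Euler characteristic additivity, $\chi(F\cup_A G)=\chi(F)+\chi(G)-1$, and the merging of the two boundary circles incident to the glued arc. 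Your bookkeeping is accurate: all six triples come out as you state, and your remark that cyclic orders carry no extra information when each circle meets at most two horizontal intervals is exactly what keeps the cases clean. What your approach buys is a systematic, verifiable computation that makes visible exactly which invariant changes under each composition; what the paper's approach buys is brevity and the observation (used implicitly in the rest of the section) that ``add a hole'' and ``add a handle'' are well-defined operations on connected components, independent of attachment point. Both are valid; the paper's is the quicker argument, and yours is the more mechanical one that scales to the kind of case-by-case verification used elsewhere in the paper when establishing bases of hom spaces.
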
 
\begin{proof} Note that the product with $b_1$ just adds a hole with no horizontal segments on it to a connected cobordism. Product with $b_2$ adds a handle to a connected cobordism. 
\end{proof}

\begin{prop}
  \begin{itemize}
  \item 
  $\End^c(1)$ is  a  commutative monoid generated by commuting elements $b_1,b_2,b_3$ with an additional defining relation
  \begin{equation*}
      b_3^2 = b_1 b_3. 
  \end{equation*}
  \item $\End^c(1)$ consists of the following distinct elements: 
  \begin{equation*}
      b_1^n b_2^m, \  b_1^n b_2^m b_3, \ \  n,m\ge 0.
  \end{equation*}
  \end{itemize} 
\end{prop}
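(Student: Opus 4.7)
The proof plan is to combine the preceding commutativity proposition with the classification of connected tf-cobordisms $1\to 1$ established earlier in this section. Such a cobordism is determined up to diffeomorphism rel horizontal boundary by a genus $g\geq 0$ together with a locally cyclic partition of $\N^1_1=\{1,1'\}$ into $\ell+1$ possibly-empty subsets. Since $|\{1,1'\}|=2$, exactly two partition types occur: \emph{Type $(A)$}, in which $1$ and $1'$ share one boundary circle while the remaining $\ell$ circles are empty, and \emph{Type $(B)$}, in which $1$ and $1'$ lie on distinct boundary circles with $\ell-1$ extra empty circles (forcing $\ell\geq 1$). Reading off the pictures, $\id_1$, $b_1$, $b_2$, $b_3$ correspond to the classes $(A;0,0)$, $(A;1,0)$, $(A;0,1)$, $(B;1,0)$ respectively.

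The key step is tracking the effect of composition on these invariants. When two connected cobordisms are stacked, they are glued along a single horizontal arc, which merges one boundary circle of each into one circle. A short Euler-characteristic count then shows that the genera add and the boundary-circle counts satisfy $(\ell+1)_{\mathrm{comp}}=(\ell+1)_{\mathrm{bot}}+(\ell+1)_{\mathrm{top}}-1$. Applied with one factor equal to a generator this yields three rules: multiplying by $b_1$ increments $\ell$ and preserves both type and $g$; multiplying by $b_2$ increments $g$ and preserves type and $\ell$; multiplying by $b_3$ increments $\ell$ by one, leaves $g$ unchanged, and \emph{always outputs a Type $(B)$ cobordism}, whether the other factor is Type $(A)$ or Type $(B)$. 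This last statement is the main (modest) technical step -- one chases the two circles of $b_3$ through the gluing in each of the two input-type cases and checks that the endpoints $1,1'$ of the product land on distinct circles.

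From these rules the proposition follows. Any element of $\End^c(1)$ of Type $(A)$ with parameters $(\ell,g)$ equals $b_1^\ell b_2^g$, and any Type $(B)$ element with parameters $(\ell,g)$, $\ell\geq 1$, equals $b_1^{\ell-1}b_2^g\,b_3$; this is exactly the normal-form list of the second bullet, and these elements are pairwise distinct because the triple (type, $\ell$, $g$) is a diffeomorphism invariant. Specializing the multiplication rules shows that both $b_3\cdot b_3$ and $b_1\cdot b_3$ equal the class $(B;2,0)$, verifying the relation $b_3^2=b_1b_3$. To see that this is the only defining relation, let $M$ denote the commutative monoid $\langle b_1,b_2,b_3\mid b_3^2=b_1b_3\rangle$; the relation reduces $b_3^k$ to $b_1^{k-1}b_3$ for $k\geq 1$, so $M$ consists exactly of the words $b_1^n b_2^m$ and $b_1^n b_2^m b_3$. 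The tautological map $M\to\End^c(1)$ sends these bijectively to the normal forms above, so $M\cong\End^c(1)$, completing the proof.
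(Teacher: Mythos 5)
Your proof is correct and follows essentially the same route as the paper: classify connected endomorphisms of $1$ by the triple consisting of whether $1,1'$ lie on the same boundary circle, the number $\ell$ of "extra" circles, and the genus $g$, and read off the normal forms $b_1^\ell b_2^g$ and $b_1^{\ell-1}b_2^g b_3$. The paper merely asserts this classification; you additionally derive the composition rules (via the Euler-characteristic/circle-merge count and the type-chasing for $b_3$) and verify that the relation $b_3^2=b_1 b_3$ is a complete presentation, details the paper leaves implicit.
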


\begin{proof}
A cobordism $S\in \End^c(1)$ is a  connected surface with $\ell+1$ boundary circles, genus $g$, and two horizontal intervals on it. If the  intervals are  on the  same connected  component of the boundary, $S=b_1^{\ell}b_2^g$. 
If the intervals lie  on  distinct  boundary components then  $\ell\ge 1$ and $S=b_1^{\ell-1}b_2^gb_3.$
\end{proof}

\vspace{0.1in}

{\it Spaces $\Hom(0,1)$ and  $\Hom(1,0)$:}
An element $y \in  \Hom(0,1)$ is a tf-cobordism with one horizontal interval, at the top. It is a    product $y_1 y_0$ of  one viewable component  $y_1\in \Hom(0,1)$  and a  closed cobordism $y_0\in \Hom(0,0)$.
Assume  that  $y$ is viewable, thus connected, since it has a unique horizontal  segment. Then $y$  is determined  by  the number $\ell+1$ of its boundary components and the genus  $g$ and can be written as
\begin{equation*}
    y = b_1^{\ell}b_2^g \iota,  
\end{equation*}
where  $\iota$ is the morphism $0\lra 1$ shown in  Figure~\ref{fig1_3} on far  left. 
Note that $b_3 \iota = b_1 \iota$. 
\begin{prop}\label{prop_y} A morphism $y\in \Hom_{\tfs}(0,1)$ has a  unique presentation  $y= b_1^{\ell} b_2^g \iota \cdot y_0$, where  $y_0\in \End(0)$ is a floating cobordism. 
\end{prop}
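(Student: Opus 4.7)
The plan is to reduce the statement to the uniqueness of the viewable–floating decomposition together with the classification of connected tf-cobordisms with prescribed horizontal boundary.

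First, I would invoke the unique decomposition $y = y_1 \cdot y_0$ established in the discussion of $\End_{\tfs}(0)$-action on $\Hom_{\tfs}(n,m)$: every tf-cobordism splits uniquely into its viewable part $y_1$ and its floating part $y_0$. Both pieces are determined by $y$, and $y_0$ is exactly the floating subcobordism.

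Next I would pin down the viewable part $y_1\in\Hom_{\tfs}(0,1)$. Since $y_1$ has no floating components and the only horizontal boundary present is the single top interval, every connected component of $y_1$ must contain that interval; hence $y_1$ is connected. By the classification of connected tf-cobordisms, $y_1$ is then determined up to diffeomorphism rel boundary by the pair $(\ell+1,g)$, where $\ell+1$ is the number of boundary circles and $g$ the genus. (The locally cyclic partition data collapse here because there is only one horizontal interval to place.) This gives the existence and uniqueness of a pair $(\ell,g)\in\Z_+^2$ such that $y_1\cong S(\ell+1,g)\cdot\iota$ as the distinguished connected tf-cobordism with these invariants.

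Finally I would identify $b_1^{\ell}b_2^{g}\iota$ with that connected cobordism. The morphism $\iota$ is a disk with one horizontal interval, so $(\ell+1,g)=(1,0)$. Multiplication by $b_1$ attaches a disjoint boundary circle (a hole) without adding genus, increasing $\ell$ by $1$; multiplication by $b_2$ attaches a handle, increasing $g$ by $1$; and the two operations commute by the previous proposition. Consequently $b_1^{\ell}b_2^{g}\iota$ is a connected tf-cobordism from $0$ to $1$ with invariants $(\ell+1,g)$, and it therefore coincides with $y_1$. Combining with the decomposition from the first step yields the unique expression $y=b_1^{\ell}b_2^{g}\iota\cdot y_0$.

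The only mild subtlety, which I would double-check, is that every viewable morphism in $\Hom_{\tfs}(0,1)$ really is forced to be connected; this is immediate because a disjoint component with no horizontal interval would be floating, contradicting viewability. After that, the proof is essentially bookkeeping on top of the classification already established.
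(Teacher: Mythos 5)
Your proof is correct and follows essentially the same route as the paper's (terse) justification immediately preceding the proposition: split off the floating part, observe the viewable part is forced to be connected because there is only one horizontal interval, and then invoke the classification of connected tf-cobordisms by $(\ell+1,g)$ together with the identification of $\iota,b_1,b_2$ with the disk, hole, and handle operations. You have merely spelled out the bookkeeping in more detail than the paper does.
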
 

Reflecting cobordisms about  the  horizontal  line, we obtain a  classification of elements  in $\Hom_{\tfs}(1,0)$.

\begin{prop} A morphism $y\in \Hom_{TFS}(1,0)$ has a  unique presentation  $y= y_0 \cdot \epsilon  b_1^{\ell} b_2^g$, where  $y_0\in \End(0)$ is a floating cobordism. 
\end{prop}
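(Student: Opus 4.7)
The plan is to reduce to Proposition~\ref{prop_y} using the reflection symmetry of the category $\tfs$ about the horizontal midline of the strip $\R\times[0,1]$, as the author himself suggests in the sentence preceding the statement.

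First, I would introduce the reflection map $r\colon \Hom_{\tfs}(n,m)\to \Hom_{\tfs}(m,n)$ induced by the self-diffeomorphism $(x,t)\mapsto(x,1-t)$ of $\R\times[0,1]$. Applied to a representative tf-surface $S$, this swaps top and bottom horizontal intervals and leaves side boundary alone (up to reparametrization), so it descends to a well-defined map on diffeomorphism-classes-rel-horizontal-boundary. Since $r$ is involutive on representatives, it gives a bijection with $r^2=\id$. It obviously respects disjoint unions, hence commutes with the $\End_{\tfs}(0)$-action on Hom-sets and sends floating cobordisms to floating cobordisms. It reverses the order of vertical composition of cobordisms.

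Next, I would identify the image under $r$ of the distinguished morphisms. Directly from the pictures in Figure~\ref{fig1_3} and Figure~\ref{fig2_1}, $\iota$ and $\epsilon$ are mirror images of each other, so $r(\iota)=\epsilon$ and $r(\epsilon)=\iota$; the hole and handle cobordisms $b_1,b_2\in\End_{\tfs}(1)$ are each diffeomorphic rel boundary to their reflections, so $r(b_1)=b_1$ and $r(b_2)=b_2$. Combined with the order-reversal property, this gives
\[
 r(b_1^{\ell}b_2^g\iota) \;=\; \epsilon\, b_2^g b_1^{\ell} \;=\; \epsilon\, b_1^{\ell}b_2^g,
\]
where the last equality uses commutativity of $b_1,b_2$ (proved just above in the excerpt).

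Now for the main step: given $y\in\Hom_{\tfs}(1,0)$, apply Proposition~\ref{prop_y} to $r(y)\in\Hom_{\tfs}(0,1)$ to obtain a \emph{unique} presentation $r(y)=b_1^{\ell}b_2^g\iota\cdot y_0'$ with $y_0'\in\End(0)$ floating. Applying $r$ once more and using that $r$ intertwines the $\End(0)$-action gives
\[
 y \;=\; r(r(y)) \;=\; r(b_1^{\ell}b_2^g\iota)\cdot r(y_0') \;=\; y_0\cdot \epsilon\, b_1^{\ell}b_2^g,
\]
where $y_0:=r(y_0')\in\End(0)$ is floating. Uniqueness of $(\ell,g,y_0)$ follows from uniqueness in Proposition~\ref{prop_y} together with the bijectivity of $r$: any other presentation of $y$ reflects to another presentation of $r(y)$ of the form in Proposition~\ref{prop_y}, forcing $\ell,g$ and $y_0'=r(y_0)$ to coincide.

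The only points that might require care are verifying that $r$ is well-defined on diffeomorphism classes rel horizontal boundary (straightforward, since the reflection is itself an ambient diffeomorphism) and that the reflected viewable component really equals $\epsilon\, b_1^{\ell}b_2^g$ as an element of $\Hom_{\tfs}(1,0)$; both are immediate from the classification of connected tf-cobordisms by the pair $(\ell,g)$ given earlier in the section. Thus there is no real obstacle—the statement is a formal dual of Proposition~\ref{prop_y}.
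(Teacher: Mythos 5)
Your proof is correct and takes essentially the same approach as the paper, which obtains the classification of $\Hom_{\tfs}(1,0)$ in one sentence by appealing to "reflecting cobordisms about the horizontal line." You have simply made explicit the properties of the reflection functor $r$ (involutivity, contravariance, compatibility with the $\End(0)$-action, fixing $b_1,b_2$, swapping $\iota$ and $\epsilon$) that the paper leaves implicit.
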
 

\vspace{0.1in}

{\it Endomorphism  monoid $\End(1)$.}
Recall that we continue with a minor abuse of  notation, where  we denote by $1$ the generating object of $\tfs$, also use it as the label for the bottom left horizontal interval of a cobordism in $\Hom(n,m)$, and use it convenionally as the label for the first natural number. 

An element $y$ of $\End_{\tfs}(1)$ may be one of the two types: 
\begin{enumerate}
    \item[1).] Horizontal intervals $1$ and $1'$ belong to  the same  connected component of $y$.
    \item[2).] Intervals $1$ and $1'$ belong to  different connected components of $y$. 
\end{enumerate}
Denote by $U_i$ the  set of elements of type $i\in \{1,2\}$, so  that
\begin{equation}\label{eq_disj_union}
  \End(1) = U_1 \sqcup  U_2 \, . 
\end{equation} 
The set $U_2$ is closed  under left and  right multiplication by elements of 
$\End(1)$, thus  constitutes a 2-sided ideal in this  monoid. The set $U_1$ is a unital submonoid in  $\End(1)$. These maps
\begin{equation*}
  U_1 \lra \End(1) \longleftarrow U_2 
\end{equation*}
upgrade decomposition (\ref{eq_disj_union}). 
The monoid $U_1$ is commutative and naturally decomposes 
\begin{equation*} 
U_1 \cong  \End^c(1) \times \End(0)
\end{equation*} 
into the  direct product, both  terms of  which we  have already described.  The direct product corresponds to splitting an element of  $U_1$ into the viewable  connected component and  a floating cobordism. 

Likewise, an element $y$ of $U_2$ splits into a floating cobordism $y_0$ and a viewable one $y_1$. 
A viewable element $y_1$ of $U_2$ consists of two connected components, one bounding horizontal inteval $1$, the other  bounding $1'$. Such an  element  can be written  as 
\begin{equation*}
    y_1 = b_1^{\ell_1}b_2^{g_1}\iota \, \cdot \, \epsilon b_1^{\ell_2}b_2^{g_2} ,
\end{equation*}
with a general $y\in U_2$ given by 
\begin{equation*}
    y = b_1^{\ell_1}b_2^{g_1}\iota \, \cdot y_0 \cdot  \, \epsilon b_1^{\ell_2}b_2^{g_2} .
\end{equation*}
Multiplication of two viewable elements as above produces an additional connected component, see Figure~\ref{fig2_3}, where by the $(\ell,g)$ coupon  we denote the endomorphism $b_1^{\ell}b_2^g$  of $1$.  
\begin{figure}[h]
\psfrag{l1g1}{$l_1,g_1$}
\psfrag{l2g2}{$l_2,g_2$}
\psfrag{l3g3}{$l_3,g_3$}
\psfrag{l4g4}{$l_4,g_4$}
\psfrag{l5g5}{$l_2+l_3,g_2+g_3$}
\begin{center}
\includegraphics[scale=0.75]{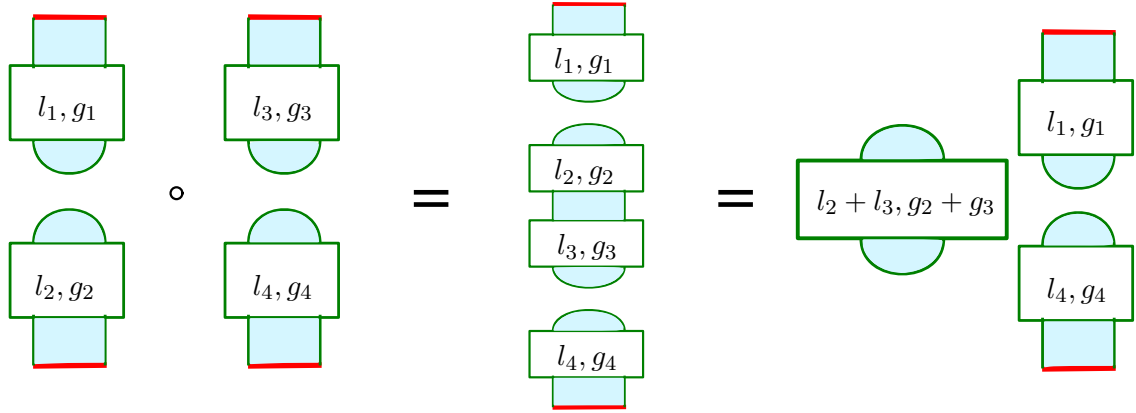}
\caption{Product of two viewable elements  of $U_2$ produces a floating component $\epsilon\, b_1^{\ell_2+\ell_3}b_2^{g_2+g_3}\iota=S_{\ell_2+\ell_3+1,g_2+g_3}$, in addition to the components bounding intervals $1$ and  $1'$.}
\label{fig2_3}
\end{center}
\end{figure}

\vspace{0.1in} 

\emph{Remark:} 
Unlike the monoids $\End(0)$, $\End^c(1)$, and  their  direct product $U_1$, monoid $\End(1)$ and its subsemigroup $U_2$ are not commutative. 

\vspace{0.1in}

%
%

\section{Linearizations of the category \texorpdfstring{$\tfs$}{TFS}}\label{sec_linear}

In this section we work over a  field $\kk$, but  the construction  and  some results may be generalized to an arbitrary commutative ring $R$ (or a commutative ring with  additional conditions, such as being noetherian). A  definitive  starting reference for recognizable series with coefficients in commutative rings is Hazewinkel~\cite{Ha}.

\vspace{0.1in} 


\subsection{Categories \texorpdfstring{$\ktfs$}{kTFS} and \texorpdfstring{$\vtfs_{\alpha}$}{VTFS(a)} for  recognizable \texorpdfstring{$\alpha$}{a}}

$\quad$

\vspace{0.1in} 

{\it Category $\ktfs$.}
Starting with $\tfs$ we can pass  to its preadditive closure $\ktfs$. Objects of $\ktfs$ are the same as those  of $\tfs,$ that  is, non-negative integers $n\in \Z_+$. A morphism in $\ktfs$ from $n$ to $m$ is a finite $\kk$-linear combination of morphisms from $n$ to $m$ in $\tfs$. In particular, $\Hom_{\ktfs}(n,m)$ is a $\kk$-vector space with  a basis $\Hom_{\tfs}(n,m)$. Composition of morphisms is  defined in the obvious   way. 

Category $\ktfs$  is a $\kk$-linear preadditive category. It is also a   rigid symmetric monoidal category. 

\vspace{0.1in}

{\it Power series $\alpha$.}
The ring $\Hom_{\ktfs}(0,0)$ of endomorphisms of the unit object $0$ of $\ktfs$ is naturally isomorphic to the monoid algebra of $\Hom_{\tfs}(0,0).$ The  latter is a free commutative monoid  on generators $S_{\ell+1,g}$, over all $\ell,g\in\Z_+$, so that  
\begin{equation*}
    \Hom_{\ktfs}(0,0) \cong \kk[S_{\ell+1,g}]_{\ell,g\in \Z_+}
\end{equation*}
is the polynomial algebra on countably many generators, parametrized by pairs $(\ell,g)$ of non-negative integers. Homomorphisms of $\kk$-algebras
\begin{equation*}
    \Hom_{\ktfs}(0,0)  \lra \kk
\end{equation*}
are in a bijection with doubly-infinite sequences
\begin{equation*}
    \alpha = (\alpha_{\ell,g})_{\ell,g\in\Z_+}, \ \ \alpha_{\ell,g}\in \kk. 
\end{equation*}
The bijection associates to a sequence $\alpha$ the homomorphism, also denoted  $\alpha$, 
\begin{equation*}
    \Hom_{\tfs}(0,0)\cong \kk[S_{\ell+1,g}]_{\ell,g\in \Z_+}\stackrel{\alpha}{\lra} \kk, \ \  \alpha(S_{\ell+1,g})=\alpha_{\ell,g}.
\end{equation*}
Sequences $\alpha$ are also in a bijection with \emph{multiplicative} $\kk$-valued evaluations of  floating cobordisms in $\tfs$. These evaluations  are maps from the  set of floating cobordisms (endomorphisms of object $0$) in $\tfs$ to  $\kk$ that take disjoint  union of cobordisms  to the product  of evaluations, 
\begin{equation*}
    \alpha(S\sqcup S') = \alpha(S) \cdot \alpha(S').
\end{equation*}

Thus, $\alpha$ is a map of sets 
\begin{equation*}
    \alpha: \Z_+ \times \Z_+ \lra \kk 
\end{equation*}
that we can think  of  a $\Z_+\times\Z_+$-matrix with coefficients in $\kk$

\begin{equation*}
     \alpha=
  \left( {\begin{array}{ccccc}
   \alpha_{0,0} & \alpha_{0,1} & \alpha_{0,2} & \alpha_{0,3} & \dots \\
   \alpha_{1,0} & \alpha_{1,1} & \alpha_{1,2} & \alpha_{1,3} & \dots \\
   \alpha_{2,0} & \alpha_{2,1} & \alpha_{2,2} & \alpha_{2,3} & \dots \\
   \alpha_{3,0} & \alpha_{3,1} & \alpha_{3,2} & \alpha_{3,3} & \dots \\
   \vdots & \vdots & \vdots & \vdots & \ddots \\
  \end{array} } \right)
\end{equation*}
We encode $\alpha$ into  power series in two variables $T_1,T_2$: 
\begin{equation}\label{eq_Z_rat_1}
    Z_{\alpha}(T_1,T_2) = \sum_{k,g\ge 0}\alpha_{k,g} T_1^k T_2^g, \ \ 
     \alpha=(\alpha_{k,g})_{k,g\in \Z_+}, \ \ \alpha_{k,g} \in \kk . 
\end{equation} 
A doubly-infinite sequence $\alpha$ can also be thought of as a linear functional on the space of polynomials in two  variables: 
\begin{equation*}
    \alpha\in \kk[T_1,T_2]^{\ast} := \Hom_{\kk}(\kk[T_1,T_2],\kk).
\end{equation*} 
We assume that $\alpha$ is not identically zero (the theory is trivial otherwise). Then $\ker(\alpha)\subset \kk[T_1,T_2]$ is a codimension one subspace. 

\vspace{0.1in} 

{\it Category $\vtfs_{\alpha}$.}  Given $\alpha$, we can form the quotient $\vtfs_{\alpha}$ of  category $\ktfs$ by adding the relation that  a floating surface  $S_{\ell+1,g}$ of genus $g$ with $\ell+1$ boundary components evaluates to  $\alpha_{\ell,g}\in \kk$. Objects  of  $\vtfs_{\alpha}$ are still non-negative  integers $n$. Morphisms from  $n$ to $m$ are finite $\kk$-linear combinations of \emph{viewable} cobordisms from $n$ to $m$. 
Composition of cobordisms from $n$ to $m$ and from $m$  to  $k$ is  a cobordism from  $n$  to $k$  which  may have  floating  components.
These components are removed simultaneously with multiplying the viewable cobordism that  remains by the product of $\alpha_{\ell,g}$'s, for every component $S_{\ell+1,g}$. 

The space of  homs from $n$  to  $m$  in this category  has a  basis  of  viewable  cobordisms  from $n$ to  $m$. Letter V in the notation $\vtfs_{\alpha}$ stands for \emph{viewable}. 

\vspace{0.1in} 

{\it Recognizable series.}
Borrowing terminology from control theory~\cite{F,FM}, we say that a linear functional or series $\alpha$ is 
\emph{recognizable} if $\ker(\alpha)$ contains an ideal $I\in \kk[T_1,T_2]$ of finite codimension. 

\begin{prop} \label{prop_recog_alpha}
$\alpha$ is recognizable iff  the power series $Z_{\alpha}$ has the form
\begin{equation}\label{eq_recog}
    Z_{\alpha}(T_1,T_2) = \frac{P(T_1,T_2)}{Q_1(T_1)Q_2(T_2)},
\end{equation}
where $Q_1(T_1),Q_2(T_2)$ are one-variable polynomials and $P(T_1,T_2)$ is a two-variable polynomial, all with coefficients in the field $\kk$.  
\end{prop}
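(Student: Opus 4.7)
The plan is to prove each implication by translating between the two sides using two elementary coefficient identities. For a one-variable polynomial $Q(T) = \sum_{i=0}^d q_i T^i$ of degree $d$, write $\tilde Q(T) := T^d Q(1/T) = \sum_{i=0}^d q_{d-i} T^i$ for its reciprocal. A direct reindexing of the sum $\sum_i q_i \alpha_{k-i,g}$ establishes the twin identities, valid for all $k \geq d$ and $g \geq 0$:
\begin{equation*}
[T_1^k T_2^g]\bigl(\tilde Q(T_1) Z_\alpha\bigr) = \alpha\bigl(Q(T_1) T_1^{k-d} T_2^g\bigr), \qquad [T_1^k T_2^g]\bigl(Q(T_1) Z_\alpha\bigr) = \alpha\bigl(\tilde Q(T_1) T_1^{k-d} T_2^g\bigr),
\end{equation*}
together with their symmetric analogues in $T_2$. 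These identities form the bridge between ideal-theoretic vanishing of $\alpha$ and polynomial boundedness of shifted power series.

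For the forward direction, suppose $I \subset \ker(\alpha)$ is an ideal of finite codimension. Since $\kk[T_1,T_2]/I$ is finite-dimensional, the kernel of the composition $\kk[T_1] \hookrightarrow \kk[T_1,T_2] \twoheadrightarrow \kk[T_1,T_2]/I$ is a nonzero ideal of $\kk[T_1]$, generated by some $Q_1(T_1)$ of degree $d_1$; symmetrically one obtains $Q_2(T_2)$ of degree $d_2$. Because $I$ is an ideal contained in $\ker(\alpha)$, we have $\alpha(Q_1 \cdot T_1^a T_2^g) = 0$ for all $a, g \geq 0$, and the analogous statement for $Q_2$. The first identity then gives $[T_1^k T_2^g](\tilde Q_1 Z_\alpha) = 0$ for $k \geq d_1$, showing that $\tilde Q_1(T_1) Z_\alpha$ has $T_1$-degree strictly below $d_1$; symmetrically $\tilde Q_2(T_2) Z_\alpha$ has $T_2$-degree below $d_2$. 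Consequently $\tilde Q_1(T_1) \tilde Q_2(T_2) Z_\alpha$ is a polynomial $P$ of bidegree $< (d_1,d_2)$, yielding $Z_\alpha = P/(\tilde Q_1(T_1) \tilde Q_2(T_2))$.

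For the reverse direction, assume $Z_\alpha = P/(Q_1(T_1) Q_2(T_2))$, normalized so that $Q_1(0) Q_2(0) \neq 0$ and the identity holds in $\kk[[T_1,T_2]]$. Then $Q_1 Z_\alpha = P/Q_2$ has $T_1$-degree at most $e_1 := \deg_{T_1} P$ (with coefficients in $\kk[[T_2]]$), so the second identity gives $\alpha(\tilde Q_1(T_1) T_1^a T_2^g) = 0$ for all $a \geq A := \max(e_1+1-d_1, 0)$ and $g \geq 0$; symmetrically there exists $B \geq 0$ with $\alpha(\tilde Q_2(T_2) T_2^b T_1^a) = 0$ for $b \geq B$ and all $a$. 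Hence the ideal $J := (\tilde Q_1(T_1) T_1^A,\,\tilde Q_2(T_2) T_2^B) \subset \kk[T_1,T_2]$ lies inside $\ker(\alpha)$, and the tensor identification $\kk[T_1,T_2]/J \cong \kk[T_1]/(\tilde Q_1 T_1^A) \otimes_\kk \kk[T_2]/(\tilde Q_2 T_2^B)$ shows it has finite codimension $(d_1+A)(d_2+B)$, proving recognizability.

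The main obstacle is nothing conceptual but bookkeeping: one must carefully distinguish a polynomial $Q$ from its reciprocal $\tilde Q$ at every step, since the vanishing of $\alpha$ on $Q$-multiples naturally corresponds to polynomial boundedness of $\tilde Q \cdot Z_\alpha$ (not $Q \cdot Z_\alpha$) and vice versa. Beyond this, the argument reduces to the principality of ideals in $\kk[T]$, elementary formal-power-series manipulations, and the standard tensor decomposition of quotients by sums of one-variable ideals in $\kk[T_1,T_2]$.
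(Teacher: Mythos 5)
Your proof is correct, and both directions are carried out carefully. The forward direction shares its core with the paper's argument --- both rest on the reciprocal-polynomial trick showing that $\widehat{Q}_1(T_1)\widehat{Q}_2(T_2) Z_\alpha$ is a polynomial --- but you obtain the one-variable polynomials $Q_1(T_1), Q_2(T_2)$ more cleanly: where the paper argues geometrically via the support of $\mathrm{Spec}(\kk[T_1,T_2]/I)$ (first over an algebraically closed field, then asserting the general case), you simply observe that the composition $\kk[T_1]\hookrightarrow \kk[T_1,T_2]\twoheadrightarrow \kk[T_1,T_2]/I$ must have nonzero kernel for dimension reasons, giving a nonzero $Q_1\in I\cap\kk[T_1]$ immediately and uniformly over any field. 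For the reverse direction the paper defers to Fliess, while you give an explicit self-contained argument: your point that $\alpha$ kills the ideal $(\widetilde{Q}_1 T_1^A,\ \widetilde{Q}_2 T_2^B)$, which has finite codimension by the tensor decomposition of $\kk[T_1,T_2]$ modulo a sum of one-variable ideals, is exactly the right construction and fills the gap the paper leaves to the reader. The "twin identities" you isolate at the start are the right bookkeeping device to make both implications mechanical; the only hypothesis you should flag as load-bearing (as the paper does, just after stating the proposition) is the normalization $Q_1(0)Q_2(0)\neq 0$, which ensures $\widetilde{Q}_i$ has the full degree $d_i$ and that $1/(Q_1 Q_2)$ makes sense as a power series.
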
 

We assume that $Q_1(0)\not=0,$ $ Q_2(0)\not=0$, otherwise at least one of these  polynomials is not  coprime with $P(T_1,T_2)$ and  either $T_1$ or  $T_2$ cancels out from the numerator and denominator. With the  denominator not  zero at  $T_1,T_2=0$ the  power series expansion makes sense. 

\begin{proof}
 See~\cite{F} for a proof. This result is also mentioned in~\cite[Remark 2]{FM}. To prove it, assume that $\alpha$ is recognizable.  We start with the case when $\kk$ is algebraically closed. A finite codimension ideal $I\subset \kk[T_1,T_2]$ necessarily contains a sum,  
 \begin{equation}\label{eq_sum_1}
     I_1 \otimes \kk[T_2]+ \kk[T_1]\otimes I_2\subset I  \subset \kk[T_1,T_2]
 \end{equation}
 for some finite codimension ideals $I_1\subset \kk[T_1]$ and  $I_2\subset  \kk[T_2]$. To see this, note  that the finite affine  scheme $\mathrm{Spec}(\kk[T_1,T_2]/I)$ is supported over finitely many points of the affine plane $\mathbb{A}^2$. Projecting these points onto the coordinate lines and counting  them with multiplicities produces two one-variable polynomials $U_1(T),U_2(T)$ such that $I$ contains the ideal $(U_1(T_1))+(U_2(T_2))$ of $\kk[T_1,T_2]$. We can now take principal ideals  $I_i=(U_i(T_i))$, $i=1,2$ to get the inclusion on the LHS of  (\ref{eq_sum_1}). This also gives a quotient map 
 \begin{equation*}
     \kk[T_1]/(U_1(T_1)) \otimes  \kk[T_2]/(U_2(T_2)) \lra \kk[T_1,T_2]/I 
 \end{equation*}
 lifting to the identity map on $\kk[T_1,T_2]$. Existence of such finite codimension ideals $I_1,I_2$ over an arbitrary  field  $\kk$ follows as well. 
 
 Hence, recognizable  series  $\alpha$  has  the  property that  $\alpha(U_1(T_1)T_1^k T_2^m)=0$ for any $k,m\ge 0$. We can assume that $U_1(T)$ is a polynomial of some  degree $r$ with the  lowest degree term $u_sT^s$ for $s\le  r$ and write 
 \begin{equation*}
     U_1(T) = u_r T^r + u_{r-1}T^{r-1}+\dots + u_{s+1} T^{s+1}+u_sT^s, \ 0\le s \le r,  \ u_r,u_s\not= 0, \ u_j \in \kk.
 \end{equation*}
 Then, for any $k,m\ge 0$ 
 \begin{equation} \label{eq_ua_1}
 u_r\alpha_{r+k,m}+ u_{r-1}\alpha_{r-1+k,m}+\dots + u_{s+1}\alpha_{s+k+1,m}+ u_s\alpha_{s+k,m} = 0.
 \end{equation} 
 We obtain  a similar relation on the  coefficients with $U_2$ and  $T_2$ in place of $U_1$ and $T_1$  and varying  the second  index. Let us write  
 \begin{equation*}
     U_2(T) = v_{r'} T^{r'} + v_{r'-1}T^{r'-1}+\dots + v_{s'+1} T^{s'+1}+v_{s'}T^{s'}, \ 0\le s' \le r',  \ v_{r'},v_{s'}\not= 0, \ v_j \in \kk.
 \end{equation*}
 Then, for any $k,m\ge 0$ 
 \begin{equation} \label{eq_ua_2}
 v_{r'}\alpha_{r'+k,m}+ v_{r'-1}\alpha_{r'-1+k,m}+\dots + v_{s'+1}\alpha_{s'+k+1,m}+ v_s\alpha_{s'+k,m} = 0.
 \end{equation} 
 
 Consequently,  $\alpha$ is eventually recurrent in both $T_1$ and $T_2$ directions and  its values are  determined by 
 $\alpha_{i,j}$ with $0\le i <r, 0\le j< r'$. 
 
 Consider polynomials 
 \begin{eqnarray*}
     \widehat{Q}_1(T) &  = &  T^r U_1(T^{-1}) = u_s T^r + u_{s+1}T^{r-1}+u_{s+2}T^{r-2}+\dots + u_r T^{r-s}, \\
     \widehat{Q}_2(T) &  = &  T^{r'} U_2(T^{-1}) = v_{s'}T^{r'} + v_{s'+1}T^{r'-1}+v_{s'+2}T^{r'-2}+\dots + v_{r'} T^{r'-s'}.
 \end{eqnarray*}
 Form the product  
 \begin{equation*}
   \widehat{P}(T_1,T_2):=Z_\alpha(T_1,T_2)\widehat{Q}_1(T_1)\widehat{Q}_2(T_2) = \sum_{i,j\ge 0} w_{i,j} T_1^i T_2^j
 \end{equation*}
and  examine coefficients of its power series expansion. Formulas  (\ref{eq_ua_1}), (\ref{eq_ua_2}) show that  $w_{i,j}=0$ if $i\ge r$ of $j\ge r'$. Therefore, $\widehat{P}(T_1,T_2)$ is a polynomial with $T_1,T_2$ degrees bounded by $r-1$, $r'-1$, respectively. We can  then form the quotient 
\begin{equation*} 
  \frac{\widehat{P}(T_1,T_2)}{\widehat{Q}_1(T_1)\widehat{Q}_2(T_2)}
  \end{equation*}
The numerator and denominator may share common factors, including $T_1^{r-s}T_2^{r'-s'}$. After canceling those out, we arrive at the presentation (\ref{eq_recog}) for $Z_{\alpha}(T_1,T_2)$. 
  
We leave the proof of the opposite  implication of the proposition to the reader or refer to~\cite{F}. 
  
Note that the proof works for any finite  number of  variables $T_1,\dots,  T_c$, not only for two. 
\end{proof}

The condition that $\alpha$ is  \emph{recognizable} can also  be expressed via its Hankel matrix $H_{\alpha}$.  The latter matrix has rows and  columns enumerated by pairs $(m,k)\in  \Z_+\times\Z_+$,  equivalently by  the monomial basis elements $T_1^mT_2^k$.  The $((m_1,k_1),(m_2,k_2))$-entry  of $H_{\alpha}$ is $\alpha_{m_1+m_2,k_1+k_2}$.  The following result is proved in~\cite{F}. 

\begin{prop} The series  $\alpha$ is recognizable iff the Hankel matrix $H_{\alpha}$ has finite rank. 
\end{prop}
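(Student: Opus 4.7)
The plan is to interpret the Hankel matrix as the matrix of the symmetric bilinear form $B_\alpha : \kk[T_1,T_2] \times \kk[T_1,T_2] \to \kk$ defined by $B_\alpha(f,g) = \alpha(fg)$, where we use the monomial basis $\{T_1^m T_2^k\}_{m,k \ge 0}$. Then the $((m_1,k_1),(m_2,k_2))$-entry of the matrix is $\alpha(T_1^{m_1+m_2}T_2^{k_1+k_2}) = \alpha_{m_1+m_2,k_1+k_2}$, matching the definition of $H_\alpha$. Consequently, $\rk(H_\alpha)$ equals the codimension of the (left) radical of $B_\alpha$.

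The next step is to identify this radical with the syntactic ideal $I_\alpha$ already appearing earlier in the paper. Namely, set
\begin{equation*}
I_\alpha \ = \ \{ f \in \kk[T_1,T_2] \ : \ \alpha(fg) = 0 \text{ for all } g \in \kk[T_1,T_2]\}.
\end{equation*}
A direct check shows $I_\alpha$ is an ideal (if $f \in I_\alpha$ and $h \in \kk[T_1,T_2]$, then $\alpha((hf)g) = \alpha(f(hg)) = 0$), and $I_\alpha \subseteq \ker(\alpha)$ by taking $g = 1$. Moreover, if $J \subseteq \ker(\alpha)$ is any ideal and $f \in J$, then $fg \in J \subseteq \ker(\alpha)$ for every $g$, so $f \in I_\alpha$; hence $I_\alpha$ is the largest ideal contained in $\ker(\alpha)$. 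By construction $I_\alpha$ is precisely the left radical of $B_\alpha$, so $\rk(H_\alpha) = \dim_\kk \kk[T_1,T_2]/I_\alpha$.

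With this identification, both implications become essentially tautological. If $H_\alpha$ has finite rank, then $I_\alpha$ itself is a finite-codimension ideal of $\kk[T_1,T_2]$ sitting inside $\ker(\alpha)$, so $\alpha$ is recognizable by definition. Conversely, if $\alpha$ is recognizable, there is some finite-codimension ideal $I \subseteq \ker(\alpha)$; then $I \subseteq I_\alpha$ by the maximality above, so $\dim \kk[T_1,T_2]/I_\alpha \le \dim \kk[T_1,T_2]/I < \infty$, giving $\rk(H_\alpha) < \infty$.

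The only step that requires genuine verification, rather than unwinding definitions, is the claim that $I_\alpha$ coincides with the left radical of the Hankel form; but this is immediate once one rewrites $B_\alpha(f,g) = \sum c_{m,k} d_{m',k'} \alpha_{m+m',k+k'}$ in coordinates and checks that vanishing against all basis monomials is the same as vanishing against all polynomials. I expect no real obstacle — the proposition is essentially a restatement of Proposition~\ref{prop_recog_alpha} in matrix-theoretic language — and the argument works verbatim over any field $\kk$, and in fact over any commutative ring once one replaces ``codimension'' with a suitable finiteness condition as in Hazewinkel~\cite{Ha}.
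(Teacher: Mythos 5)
Your proof is correct, and it supplies an argument where the paper itself supplies none: for this proposition the paper simply cites Fliess~\cite{F} and adds a remark on what ``finite rank'' means for an infinite matrix, without reproducing a proof. The key move you make is to observe that the row of $H_\alpha$ indexed by $(m_1,k_1)$ is exactly the functional $T_1^{m_1}T_2^{k_1}\alpha \in \kk[T_1,T_2]^\ast$ (written in the dual monomial basis), so the row space of $H_\alpha$ is the image of the map $h:\kk[T_1,T_2]\to\kk[T_1,T_2]^\ast$, $z \mapsto z\alpha$, from~(\ref{eq_h_hom}); since $\ker h = I_\alpha$, this gives $\rk H_\alpha = \dim \kk[T_1,T_2]/I_\alpha$, and the equivalence follows from the maximality of $I_\alpha$ among ideals inside $\ker(\alpha)$. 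This is more elementary and self-contained than routing through the rational-function characterization of Proposition~\ref{prop_recog_alpha}, and it is in fact logically independent of that proposition (so your closing sentence claiming it is ``essentially a restatement'' of Proposition~\ref{prop_recog_alpha} is a little loose — both results characterize recognizability, but neither is derived from the other). The one step you leave implicit, that the rank of an infinite matrix (supremum of ranks of finite minors, per the paper's remark) agrees with the dimension of its row space, is standard and holds over any field, so there is no gap.
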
 
Note that $H_{\alpha}$ has finite rank iff there exists $M$ such that any $M\times M$ minor of $H_{\alpha}$ has determinant zero. The rank is  $M-1$  if  in  addition there is an $(M-1)\times (M-1)$ minor with a non-zero determinant.   

\vspace{0.1in}


\subsection{Skein category \texorpdfstring{$\stfs_{\alpha}$}{STFS(a)}.} 
\label{subset_skein}   
$\quad$

\vspace{0.1in} 

{\it Recognizable series and commutative Frobenius algebras.} 
Assume  that $\alpha$ is recognizable. Among all finite-codimension ideals $I\subset\ker(\alpha)$ there is a unique largest ideal $I_{\alpha}$, given by the sum over  all such $I$. Equivalently, it can  be described as follows. There  is a homomorphism of $\kk[T_1,T_2]$-modules  
\begin{equation}\label{eq_h_hom}
    h :  \kk[T_1,T_2] \lra \kk[T_1,T_2]^{\ast}
\end{equation}
given by sending  $1$ to $\alpha$  and $z\in\kk[T_1,T_2]$ to $z\alpha\in \kk[T_1,T_2]^{\ast}$ with $(z\alpha)(f)=\alpha(zf)$. The ideal $I_{\alpha}$ is the kernel  of $h$.  

Notice that $\alpha$ descends to  a  nondegenerate bilinear  form on the   quotient algebra 
\begin{equation}\label{eq_A_alpha}
    A_{\alpha} \ := \  \kk[T_1,T_2]/I_{\alpha}.
\end{equation}
In particular, $A_{\alpha}$ is a commutative Frobenius algebra on two  generators $T_1,T_2$  with a nondegenerate trace  form $\alpha$. 

Vice versa, assume  given a commutative Frobenius $\kk$-algebra $B$ with the nondegenerate trace form $\beta:B\lra \kk$ and a pair  of generators $g_1,g_2$. To such data we can associate  a surjective homomorphism 
\begin{equation*}
   \psi: \kk[T_1,T_2] \lra B,\  \psi(T_i)=g_i, \ i = 1,2, 
\end{equation*}
the trace map $\alpha=\beta\circ   \psi$ on $\kk[T_1,T_2]$ given by composing  $\psi$  with $\beta$, 
and recognizable series 
\begin{equation*}
    \alpha_{\beta}= \sum_{\ell,g\ge 0} \beta(g_1^{\ell}g_2^g) T_1^{\ell}T_2^g . 
\end{equation*}
Thus, recognizable power series on  $\kk[T_1,T_2]$ are classified by isomorphism classes  of data $(B,g_1,g_2,\beta)$: a commutative Frobenius algebra $B$  generated by  $g_1,g_2\in B$ and a non-degenerate trace $\beta$. 

\vspace{0.1in} 

{\it Category $\stfs_{\alpha}$.}
We can now define the category $\stfs_{\alpha}$ (where first S stands for ``skein'') to be a quotient of  $\vtfs_{\alpha}$ by the skein relations  in the ideal $I_{\alpha}$. The category $\stfs_{\alpha}$ has the same objects as all the other cobordism categories  we've  considered so far,  that is, nonnegative  integers $n$. Morphisms from $n$  to $m$ are $\kk$-linear combinations of  viewable cobordisms modulo the relations in $I_{\alpha}$. Precisely, let 
\begin{equation}\label{eq_poly_p}
    p(T_1,T_2)=\sum_{i,j}  p_{i,j}T_1^i T_2^j \in I_{\alpha}
\end{equation}
be a polynomial in the ideal $I_{\alpha}$. Given a viewable cobordism $x$ choose a component $c$ of $x$ and denote by  $x_c(i,j)$ the cobordism  given by  inserting  $i$ holes and adding $j$  handles to $x$ at  the component $c$. We  now mod out the  hom  space  $\Hom_{\vtfs_{\alpha}}(n,m)$, which is a  $\kk$-vector space with a basis of all viewable cobordisms from $n$ to  $m$,  by  the relations 
\begin{equation*}
\sum_{i,j}  p_{i,j}\, x_c(i,j)=0, 
\end{equation*}
one  for each component $c$ of  $x$, over all viewable cobordisms $x$.

It is  easy to see that these ``skein''  relations are compatible with $\alpha$-evaluation of floating cobordisms. Namely, if instead of a viewable cobordism  $x$ we consider  a floating cobordism  $y$ and  choose a  component $c$ of  $y$  to add holes and handles,  resulting in cobordisms $y_c(i,j)$, then 
\begin{equation*}
\sum_{i,j}  p_{i,j}\alpha(y_c(i,j))=0. 
\end{equation*}
This compatibility condition, immediate from  our definition of  $I_{\alpha}$ as the kernel of the module map (\ref{eq_h_hom}), ensures non-triviality of this quotient  and its  compatibility with the composition of morphisms.

\vspace{0.1in} 

Viewing  $\vtfs_{\alpha}$ as a tensor category, it is enough to write down corresponding relations on homs from $0$ to $1$ and then mod  out by them in the tensor category (by gluing  each term in the  resulting linear  combination of products  of  holes  and  handles on a disk to any component along a segment on its side  boundary).  Choose a generating set $v_1,\dots,v_r$  of  $I_{\alpha}$ viewed as $\kk[T_1,T_2]$-module. Specializing to  a single   basis element $v_j$, assume that it is given by the polynomial $p$  on the right hand side  of (\ref{eq_poly_p}). Form the element 
\begin{equation*}
    b(v_j) := \sum_{i,j} p_{i,j} b_1^i b_2^j \, \iota \in \Hom(0,1). 
\end{equation*}

The skein  category $\stfs_{\alpha}$ can  be defined as the quotient of  $\vtfs_{\alpha}$ by the tensor ideal generated by elements $b(v_1),\dots, b(v_r)$. Figure~\ref{fig3_1}  shows an example of an element  $b(v)$.

\begin{figure}[h]
\begin{center}
\includegraphics[scale=0.6]{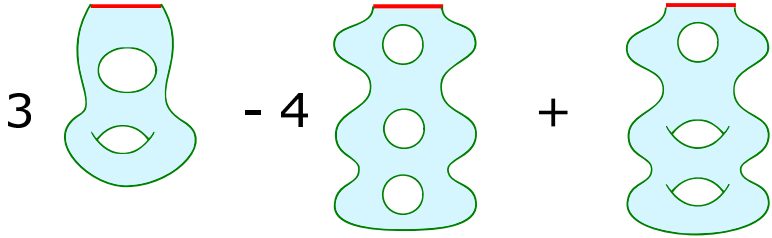}
\caption{$b(v)$, for $v=3 T_1 T_2 - 4 T_1^3 + T_1 T_2^2 $; handles are shown schematically.}
\label{fig3_1}
\end{center}
\end{figure}

\vspace{0.1in} 

{\it Remark:}
For a recognizable series $\alpha$ there are unique minimal degree monic polynomials $q_{\alpha,1},q_{\alpha,2}$, 
\begin{equation*}
    q_{\alpha,1}(x) = x^{t}+a_{t-1}x^{t-1}+\dots + a_0, \ \  q_{\alpha_2}(x) =  
    x^{t'}+a'_{t'-1}x^{t'-1}+\dots + a'_0,
\end{equation*}
such  that 
\begin{equation*} 
q_{\alpha,1}(T_1)\in I_{\alpha}, \ \ 
q_{\alpha,2}(T_2)\in I_{\alpha}.
\end{equation*} 
Among skein relations associated to elements of $I_{\alpha}$ in $\stfs_{\alpha}$ there is a polynomial relation that utilizes only adding holes to a component of the cobordism.  This  relation is  given by the polynomial $q_{\alpha,1}(T_1)$: 
\begin{equation*}
    b_1^{t}+a_{t-1}b_1^{t-1}+\dots + a_0=0,
\end{equation*}
describing an equality in the ring of  endomorphisms of  object $1$ of $\stfs_{\alpha}$, where  $b_1$ is the  \emph{hole} cobordism, see  Figure~\ref{fig2_2}. Equivalently, it can be rewritten as a relation  in $\Hom(0,1)$: 
\begin{equation*}
    (b_1^{t}+a_{t-1}b_1^{t-1}+\dots + a_0)\iota=0,
\end{equation*}

Likewise, 
there  is  a  skein  relation on cobordisms that differ only by genus  of a given component. The  relation  is given by the polynomial $q_{\alpha,2}(T_2)$:
\begin{equation*}
    b_2^{t'}+a'_{t'-1}b_2^{t'-1}+\dots + a'_0 =0, 
\end{equation*}
where  $b_2$ is the \emph{handle} morphism, see Figure~\ref{fig2_2}.

\vspace{0.1in} 

{\it Minimal viewable cobordisms, $B_{\alpha}$-companions, and bases of hom spaces of  $\stfs_{\alpha}.$}

Consider a connected viewable cobordism $x$. 
We say that  $x$ is \emph{minimal} if it has genus zero and no \emph{holes}, that  is, each boundary component of  $x$ contains at least one horizontal segment. Equivalently $x$ is  minimal if it  cannot  be  factored  into $x'b_1 x''$ or $x' b_2 x''$ for some morphisms $x',x''$. Note that if such a factorization exists, then there exists one with $x''$ the identity cobordism and one with $x'$ the identity cobordism. Any viewable connected cobordism $x$ from $n$  to $m$ with $m>0$ can  be written as $(b_1^ib_2^j\otimes \id_{m-1})y$ for some  minimal $y$ and, if $n>0$,  as 
$y(b_1^ib_2^j\otimes \id_{n-1})$  for the  same  $y$, see Figure~\ref{fig3_1_1}. If one of $n$ or $m$ is zero, only  one of these two presentations exist.
\begin{figure}[h]
\begin{center}
\psfrag{x}{$~~x$}
\psfrag{y}{$~~y$}
\psfrag{b1b2}{$b_1^ib_2^j$}
\includegraphics[scale=0.7]{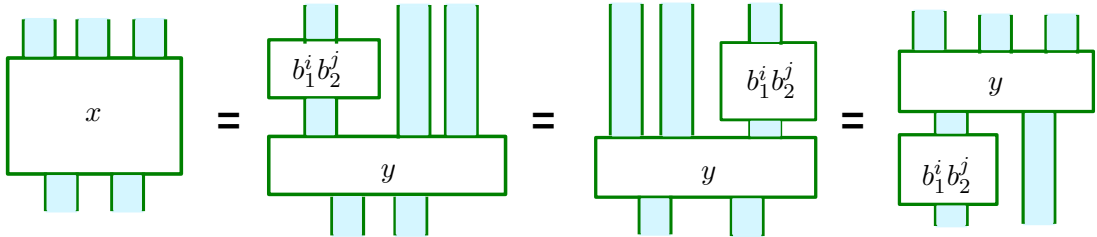}
\caption{Factorization of a connected  cobordism $x$ into a coupon and a minimal cobordism is shown schematically. Since $y$ is  connected, $b_1^i b_2^j$ coupon can be moved to any leg of $y$. }
\label{fig3_1_1}
\end{center}
\end{figure}
Equivalently, a connected viewable cobordism 
$x$ is minimal if it is \emph{handless} and has no holes. 

\vspace{0.1in}

A viewable cobordism $y$ is called \emph{minimal} if each connected component  of $y$ is  minimal. A viewable cobordism $x$ factors into a product of a minimal  cobordism and ``coupons'' carrying powers of $b_1,b_2$, one for each connected component 
of $x$. That is, for each connected component $c$ of $x$ count holes and handles on it and then remove them to get a minimal connected component $c'$. The original component can be recovered by inserting holes and handles back anywhere along $c'$. For instance, they may be inserted at one  of its top or bottom legs by  multiplying $c'$ by the corresponding powers of $b_1$ and  $b_2$ there. 

To any viewable $x$ we can associate its minimal counterpart  $y$  by removing  holes and handles  from each  connected component of $x$. Given $y$, we can recover $x$ by multiplying  by appropriate powers  of $b_1$ and $b_2$ at horizontal intervals for different  components of $y$.  

Denote  by $\mc{M}(n,m)$ the set of minimal viewable cobordisms from $n$ to $m$. 

\begin{prop} $\mc{M}(n,m)$ is a finite set. 
\end{prop}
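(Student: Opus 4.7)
The plan is to apply the classification of morphisms from Subsection 2.2 directly and show that each of the finitely many book-keeping choices that make up a minimal viewable cobordism ranges over a finite set. I will first partition the horizontal data, then describe each connected component, and finally multiply the counts.

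First, any viewable cobordism $x \in \Hom_{\tfs}(n,m)$ determines a set-theoretic partition $\lambda$ of $\N^m_n = \{1,\dots,n,1',\dots,m'\}$ into non-empty subsets, where each part records which horizontal intervals belong to a single connected viewable component of $x$. The number of such partitions is the Bell number $B_{n+m}$, which is finite.

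Second, for each part $P \in \lambda$ of size $k$, the corresponding connected viewable component is, by the classification recalled in Subsection 2.2, determined by a locally cyclic partition of $P$ (with empty subsets allowed in general) together with a choice of genus $g \in \Z_+$. Minimality imposes two constraints on this data: the genus must satisfy $g = 0$, and every subset of the locally cyclic partition must be non-empty, since an empty subset would correspond precisely to a boundary component of the surface disjoint from the horizontal boundary of the strip, i.e., a ``hole''. These two conditions leave only finitely many possibilities for the component associated to $P$, because the set partitions of a $k$-element set are counted by the Bell number $B_k$, and each block of size $k_i$ admits exactly $(k_i - 1)!$ cyclic orders, yielding the finite upper bound $\sum_{\text{partitions of } P} \prod_i (k_i - 1)!$.

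Multiplying the finite number of choices across the parts of $\lambda$, and then summing over the finitely many partitions $\lambda$ of $\N^m_n$, produces a finite upper bound for $|\mc{M}(n,m)|$. There is no real obstacle beyond bookkeeping; the content of the statement is simply that ruling out holes removes the only source of infinite families of boundary components (empty subsets in the locally cyclic partition) and that ruling out handles removes the only source of an unbounded genus parameter $g \in \Z_+$, so the classification reduces to a finite combinatorial count.
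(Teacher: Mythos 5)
Your proof is correct and takes essentially the same approach as the paper: both reduce to the classification of connected viewable cobordisms from Subsection~2.2 (partition of $\N^m_n$ into components, then a locally cyclic partition of each part, with minimality killing genus and empty subsets), and the paper simply states the resulting bijection with nested partition-and-cyclic-order data while you make the finite counting explicit via Bell numbers and $(k_i-1)!$ factors.
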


\begin{proof} From  our  classification of morphisms in  $\tfs$ it is clear that minimal cobordisms from $n$  to $m$ are in a bijection with partitions $\lambda$ of the  set $\N^m_n$ of $n+m$ horizontal intervals, together with a  choice of a partition $\mu_i$ of each part $\lambda_i$ of $\lambda$ and a cyclic order on each  part of $\mu_i$. 
\end{proof} 

Recall finite codimension ideal $I_{\alpha}$ (the \emph{syntactic} ideal) associated with recognizable series $\alpha$. 
Let
\begin{equation*}
    d_{\alpha}=\dim (\kk[T_1,T_2]/I_{\alpha}).
\end{equation*}
 Choose a  set of pairs 
\begin{equation*}
P_{\alpha}= \{(i_t,j_t)\}_{t=1}^{d_{\alpha}}, \ \  i_t,j_t \in \Z_+
\end{equation*} 
such  that monomials $T_1^{i_t}T_2^{j_t}$ constitute a basis of the algebra  $\kk[T_1,T_2]/I_{\alpha}$. Denote this basis by $B_{\alpha}$. It is  well-known~\cite{MiS} that a basis can always be  choosen so that the exponents $(i_t,j_t)$ of  the monomials, when placed into corresponding points of  the square lattice, constitute a partition of $d_{\alpha}$, but we do  not need this result here. 

Choose a  minimal cobordism $y$ and assign an element $v_c\in  B_{\alpha}$ to each connected  component $c$ of $y$. This  assignment  gives rise to a cobordism $x$ obtained from  $y$ by inserting cobordisms  $b(v_c)$ at all  components $c$ of $y$. For $v_c=T_1^iT_2^j$ we add $i$  holes and $j$ handles to the component $c$ or, equivalently,  multiply it at one of its horizontal  boundary  intervals by $b_1^ib_2^j$. 

In this way to $y\in \mc{M}(n,m)$ there are assigned  $d_{\alpha}^r$ cobordisms $x$, where $r $ is the number of components  of $y$. These  $x$ are called $B_{\alpha}$-companions  of $y$. Denote  the set of  such $x$ by $B_{\alpha}(y). $

\begin{prop} Elements of sets $B_{\alpha}(y)$, over all $y\in \mc{M}(n,m)$,  constitute a basis of $\Hom_{\stfs_{\alpha}}(n,m).$
\end{prop}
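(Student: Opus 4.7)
The strategy is to prove spanning and linear independence separately.

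\emph{Spanning.} Any viewable cobordism $x\in\Hom_{\vtfs_\alpha}(n,m)$ factors uniquely as its minimal counterpart $y\in\mc{M}(n,m)$ decorated, at a chosen horizontal segment of each connected component $c$, by $b_1^{i_c}b_2^{j_c}$. Since $B_\alpha=\{T_1^{i_t}T_2^{j_t}\}_{t=1}^{d_\alpha}$ is a basis of $A_\alpha=\kk[T_1,T_2]/I_\alpha$, each monomial $T_1^{i_c}T_2^{j_c}$ rewrites, modulo $I_\alpha$, as a $\kk$-linear combination of $B_\alpha$-monomials. Translating this congruence into the skein relation on component $c$ and iterating over all components expresses $x$ in $\stfs_\alpha$ as a linear combination of elements of $B_\alpha(y)$. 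Since viewable cobordisms span $\Hom_{\vtfs_\alpha}(n,m)$, the disjoint union $\bigsqcup_y B_\alpha(y)$ spans each $\Hom_{\stfs_\alpha}(n,m)$.

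\emph{Linear independence.} The crucial observation is that the defining skein relation $\sum_{i,j}p_{i,j}\,x_c(i,j)=0$ only alters holes and handles on a single component $c$ of $x$, so every term shares the same minimal counterpart as $x$. Tensoring or composing with any morphism preserves this property, acting uniformly across the terms of a given skein relation. Consequently, $\Hom_{\stfs_\alpha}(n,m)$ is naturally graded by $\mc{M}(n,m)$, and any putative dependence among elements of $\bigsqcup_y B_\alpha(y)$ splits into separate dependences inside each $B_\alpha(y)$.

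For a fixed $y$ with $r$ connected components, the decoration data lives in $\kk[T_1,T_2]^{\otimes r}$, one tensor factor per component, and the component-local skein relations cut this down to $A_\alpha^{\otimes r}$ of dimension $d_\alpha^r$, with monomial basis indexed precisely by $B_\alpha(y)$. To rule out any further collapse I would construct an Atiyah-style evaluation: close $y$ by capping each component into a floating surface and apply $\alpha$, then pair $B_\alpha(y)$ against the Frobenius-dual basis of $A_\alpha^{\otimes r}$ (which exists because, by definition, $I_\alpha$ is the kernel of the trace form, so $\alpha$ is nondegenerate on $A_\alpha$). The resulting Gram matrix is a tensor product of $r$ copies of the nondegenerate Frobenius form on $A_\alpha$, hence itself nondegenerate, forcing the $d_\alpha^r$ elements of $B_\alpha(y)$ to remain linearly independent in $\stfs_\alpha$.

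The main obstacle is the careful verification of the locality claim: showing that the tensor ideal generated by the skein relations does not secretly introduce cross-component or cross-$y$ relations through composition or monoidal product. Intuitively this follows from the fact that adding holes and handles commutes with all categorical operations and does not alter the combinatorial matching data of horizontal segments; but rigorously it requires checking that every application of the tensor ideal to a viewable cobordism reduces to a sum of component-local skein moves on a fixed minimal counterpart, which is exactly what the $A_\alpha^{\otimes r}$ model already accounts for.
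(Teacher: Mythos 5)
Your overall plan is sound and in fact more explicit than the paper's very terse proof, which merely asserts that the result follows from the construction once one checks ``consistency.'' You correctly reduce spanning to the decoration-by-$A_\alpha$-basis observation, correctly note that the defining skein relations preserve the minimal counterpart so that $\Hom_{\stfs_\alpha}(n,m)$ splits by $y\in\mc{M}(n,m)$, and correctly identify the locality of the tensor ideal as the crux. That much matches the paper's intent.

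However, the Atiyah-style pairing you use to ``rule out further collapse'' contains a genuine error. You claim that closing each component of $y$ into a floating surface and applying $\alpha$ yields a Gram matrix that is a tensor product of $r$ copies of the nondegenerate Frobenius form on $A_\alpha$. This is false except in degenerate cases. Closing a component $c$ with $k$ horizontal intervals and $b$ boundary circles (with $y$ minimal, so $g=0$ and each circle carries an interval) against its reflection, or against disks capping each interval, produces a floating surface of genus $g_c$ with $\ell_c+1$ boundary circles where $(\ell_c,g_c)\neq(0,0)$ in general: a disk with $2$ intervals closes to an annulus ($\ell_c=1$), an annulus with one interval per circle closes to a surface with $\ell_c\geq 1$ or $g_c\geq 1$ depending on the closure, and so on. The resulting form on $A_\alpha$ is $(u,v)\mapsto\alpha(T_1^{\ell_c}T_2^{g_c}\,uv)$, which is degenerate exactly when $T_1^{\ell_c}T_2^{g_c}$ is a zero divisor in $A_\alpha$ — and this happens, e.g.\ for $A_\alpha=\kk[T_1,T_2]/(T_1^2,T_2)$ with $\alpha(1)=0,\,\alpha(T_1)=1$. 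Indeed this degeneracy is unavoidable: the quotient map $\stfs_\alpha\to\tfs_\alpha$ to the gligible category is not injective in general precisely because such trace pairings are degenerate, so a trace-pairing argument cannot establish linear independence in $\stfs_\alpha$.

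The fix is to carry out directly the locality claim you defer to the end. Each generator $g\circ(b(v_j)\otimes\id)\circ f$ of the tensor ideal expands to a linear combination in which all terms differ only in the $(b_1,b_2)$-decoration on a single component with the polynomial being $T_1^{\ell}T_2^{g_0}\cdot v_j\in I_\alpha$ (and when that component becomes floating the combination vanishes because $I_\alpha\subset\ker\alpha$). Hence the tensor ideal intersected with each $y$-graded piece of $\Hom_{\vtfs_\alpha}(n,m)\cong\kk[T_1,T_2]^{\otimes r}$ is exactly the kernel of the map to $A_\alpha^{\otimes r}$, and the $y$-piece of $\Hom_{\stfs_\alpha}(n,m)$ is precisely $A_\alpha^{\otimes r}$. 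No pairing is needed — and indeed once locality is in hand, the pairing step is redundant.
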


In other words, to get a basis of homs from  $n$ to  $m$ in the skein category $\stfs_{\alpha}$ we take all minimal cobordisms $y$ from $n$ to $m$ and  insert a basis element from  $B_{\alpha}$ into each component of $y$. 

\begin{proof} 
The proposition follows immediately  from our construction of $\stfs_{\alpha}$. One needs to check consistency, that our rules do not force additional relations when composing cobordisms. This is  straightforward. 
\end{proof} 

\begin{cor}
 Hom spaces  in the category $\stfs_{\alpha}$ are finite dimensional. 
\end{cor}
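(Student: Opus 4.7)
The plan is to deduce the corollary directly from the preceding proposition, which gives an explicit basis of $\Hom_{\stfs_{\alpha}}(n,m)$ indexed by pairs $(y, \phi)$ where $y\in\mc{M}(n,m)$ is a minimal viewable cobordism and $\phi$ assigns to each connected component $c$ of $y$ an element of the finite basis $B_{\alpha}$ of $\kk[T_1,T_2]/I_{\alpha}$. So I only need to check that the indexing set is finite.

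First I would note that $d_{\alpha}=\dim(\kk[T_1,T_2]/I_{\alpha})<\infty$ since $I_{\alpha}$ has finite codimension in $\kk[T_1,T_2]$ (this is the defining property of recognizable series, established earlier). Hence $|B_{\alpha}|=d_{\alpha}$ is finite. Second, by the preceding proposition, $\mc{M}(n,m)$ is finite: a minimal viewable cobordism is determined by a partition $\lambda$ of $\N^m_n$, a refinement $\mu_i$ of each part $\lambda_i$, and a cyclic order on each part of $\mu_i$, and each of these combinatorial data has only finitely many choices since $|\N^m_n|=n+m<\infty$.

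Finally, for each $y\in\mc{M}(n,m)$ the number of connected components $r=r(y)$ is bounded by $n+m$, so $|B_{\alpha}(y)|=d_{\alpha}^{r(y)}\le d_{\alpha}^{n+m}$. Summing over the finite set $\mc{M}(n,m)$ yields
\begin{equation*}
\dim_{\kk}\Hom_{\stfs_{\alpha}}(n,m) \ = \ \sum_{y\in\mc{M}(n,m)} d_{\alpha}^{r(y)} \ \le \ |\mc{M}(n,m)|\cdot d_{\alpha}^{n+m} \ < \ \infty,
\end{equation*}
which proves the corollary.

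There is really no obstacle here: this is a bookkeeping consequence of (i) the finiteness of $B_{\alpha}$ (which is where recognizability of $\alpha$ enters) and (ii) the finiteness of $\mc{M}(n,m)$ already established. The only thing worth double-checking is the bound $r(y)\le n+m$, which holds because every component of a \emph{viewable} cobordism carries at least one horizontal segment, and there are only $n+m$ such segments available.
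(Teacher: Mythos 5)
Your proof is correct and is exactly the argument the paper intends: the corollary follows immediately from the two preceding propositions (finiteness of $\mc{M}(n,m)$ and the basis description via $B_{\alpha}$-companions), and you have simply spelled out the bookkeeping, including the correct observation that $r(y)\le n+m$ since each viewable component carries at least one horizontal segment.
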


\vspace{0.1in}

{\it Remark:} In a seeming discrepancy, object $1$ of the category  $\tfs$ is a symmetric Frobenius object but not a commutative Frobenius object, see Figure~\ref{fig1_5_1} left, since the multiplication map  $1\otimes  1\lra  1$ does not commute with  the  permutation endomorphism of  $1\otimes 1$. 
Yet, in  the category $\stfs_{\alpha}$ the state space $\Hom(0,1)$ of  the interval is a commutative Frobenius algebra $A_{\alpha}$, 
defined in (\ref{eq_A_alpha}), with the multiplication on $\Hom(0,1)$ given by the  \emph{thin flat pants} cobordism in Figure~\ref{fig3_2} left.  This  is explained by the  observation that  the thin  flat pants multiplication is  commutative  in the  categories we consider, including $\tfs$  and  $\vtfs_{\alpha}$  and $\stfs_{\alpha}$. Indeed, viewable morphisms from  $0$ to $1$ in $\tfs$ have the form $b_1^n b_2^m \iota$, and the product of two such morphisms does not depend on  their order, see  Figure~\ref{fig3_2} right. Adding floating  components (or passing to  linear combinations, or taking quotients) does not break commutativity.

Later, in Section~\ref{subsec_adding}, in a similar situation we  also denote $b_1^n b_2^m \iota$ by  $\undb_1^n\undb_2^m$.

\begin{figure}[h]
\psfrag{c1}{$b_1^{n_1}b_2^{n_2}$}
\psfrag{c2}{$b_1^{m_1}b_2^{m_2}$}
\psfrag{c3}{$b_1^{n_1+m_1}b_2^{n_2+m_2}$}
\begin{center}
\includegraphics[scale=0.8]{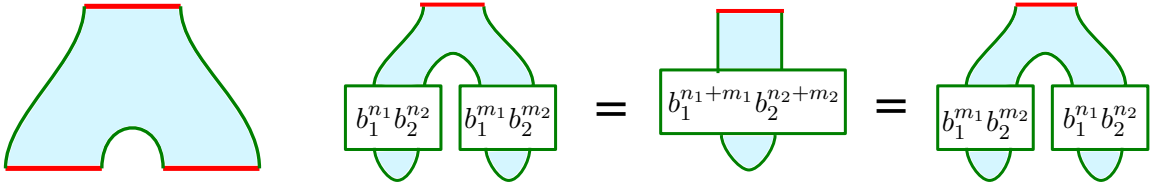}
\caption{Left: thin flat pants  cobordism  from $1\otimes 1$ to  $1$. Right: commutativity of multiplication in  $\Hom(0,1)$. }
\label{fig3_2}
\end{center}
\end{figure}

\vspace{0.1in} 


\subsection{Quotient by negligible  morphisms and  Karoubi envelopes.}

$\quad$

\vspace{0.1in}

{\it Category  $\tfs_{\alpha}$.} Consider the ideal $J_{\alpha}\subset \stfs_{\alpha}$ of negligible morphisms, relative to the trace form $\tral$ associated with $\alpha$, and form the quotient category  
\begin{equation*}
    \tfs_{\alpha} := \stfs_{\alpha}/J_{\alpha}.
\end{equation*}

The trace form is given on a cobordism $x$ from $n$ to $n$ by closing it via $n$ annuli connecting $n$ top with $n$ bottom circles of the horizontal boundary  of $x$ into a floating cobordism $\widehat{x}$ and  applying $\alpha$, 
\begin{equation*}
  \tral(x) := \alpha(\widehat{x}). 
\end{equation*} 
This operation is depicted in  Figure~\ref{fig3_3}. 

\begin{figure}[ht]
\psfrag{x}{$~~x$}
\psfrag{xh}{$\widehat{x}~=$}
\psfrag{al}{$\alpha$}
\psfrag{axh}{$\alpha(\widehat{x})$}
\begin{center}
\includegraphics[scale=0.65]{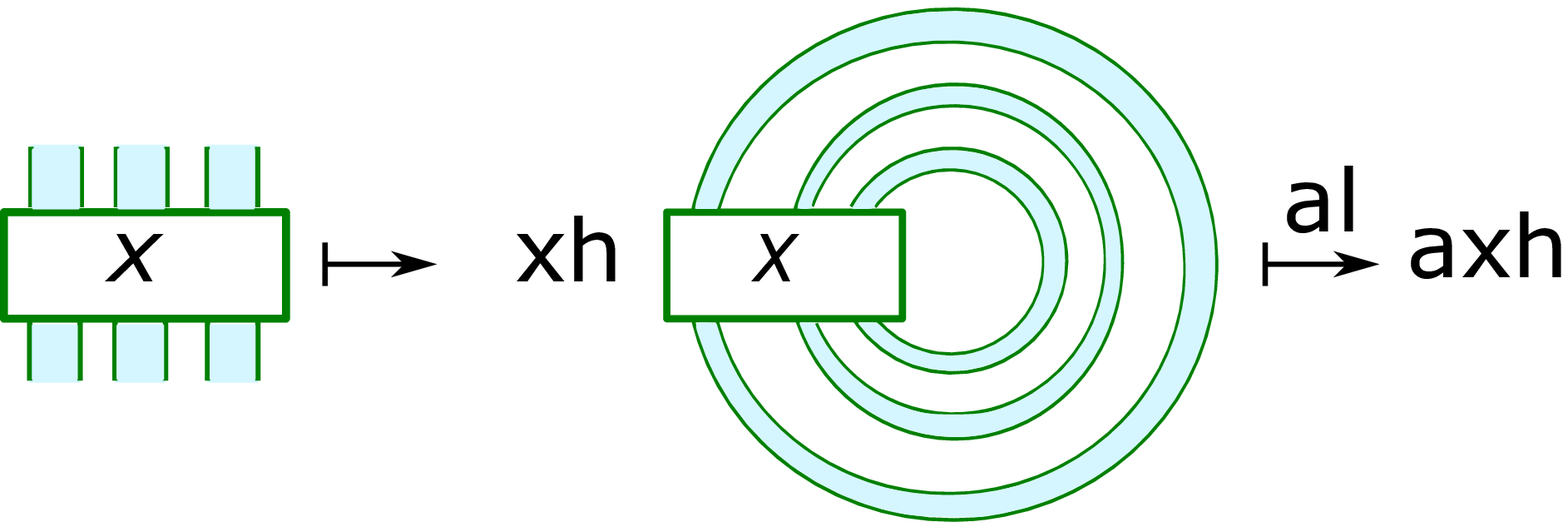}
\caption{The trace map: closing endomorphism $x$ of $n$ into  $\widehat{x}$ and applying $\alpha$.  }
\label{fig3_3}
\end{center}
\end{figure}
 
 A morphism $y\in\Hom(n,m)$ is called \emph{negligible} if $\tral(zy)=0$ for any morphism $z\in\Hom(m,n)$. Negligible morphisms constitute a two-sided ideal in the pre-additive category $\stfs_{\alpha}$. 
 
 The quotient category $\tfs_{\alpha}$ has finite-dimensional hom spaces, as does  $\stfs_{\alpha}$ (recall that $\alpha$ is recognizable). The trace is nondegenerate on $\tfs_{\alpha}$ and defines perfect bilinear  pairings
 \begin{equation*}
     \Hom(n,m) \otimes \Hom(m,n) \lra \kk
 \end{equation*}
 on its hom spaces. We may call $\tfs_{\alpha}$ the  \emph{gligible quotient} of $\stfs_{\alpha}$, having modded out by  the ideal of negligible morphisms.

 \vspace{0.1in}
 
 Let us go back to the category $\tfs$ and  its linear version  $\kk\tfs$. Fix the number $n$ of intervals  and  consider the vector space $V_n$ with a basis of all viewable tf-surfaces with that boundary, that is  viewable cobordisms in $\tfs$  from $0$  to $n$. Given $\alpha$, define a bilinear form on $V_n$ via its values on pairs of basis vectors: 
 \begin{equation*}
 (x,y) = \alpha(\overline{y}x) \in\kk, 
 \end{equation*}
 where  $\overline{y}$ is given by reflecting  $y$ about a horizontal  line to  get a cobordism from $n$ to $0$, and $\overline{y}x$ is a floating cobordism from  $0$ to $0$ given by composing $\overline{y}$ and $x$. This bilinear form on $V_n$ is symmetric. Define $A_{\alpha}(n)$ as the quotient of $V_n$ by  the  kernel of this  bilinear form. Then there is a  canonical isomorphism
 \begin{equation*}
     A_{\alpha}(n) \cong \Hom_{\tfs_{\alpha}}(0,n)
 \end{equation*}
 as  well as isomorphisms 
 \begin{equation*}
     A_{\alpha}(n+m) \cong \Hom_{\tfs_{\alpha}}(0,n+m)  \cong \Hom_{\tfs_{\alpha}}(m,n)
 \end{equation*}
 given by moving $m$ invervals from bottom to top via the duality morphism. 
 
 \vspace{0.1in} 
 
 The symmetric  group $S_n$ acts by permutation  cobordisms on $A_{\alpha}(n)$. Furthermore, at each circle there is an action of the endomorphism algebra $\End(1)=\End_{\tfs_{\alpha}}(1)$. Consequently, the cross-product algebra $\kk\S_n \ltimes \End(1)^{\otimes k}$ acts on $A_{\alpha}(n)$.
 
 Multiplication  maps 
 \begin{equation*} 
 A_{\alpha}(n) \otimes  A_{\alpha}(m)\lra A_{\alpha}(n+m)
 \end{equation*} 
 turn the direct sum 
 \begin{equation*}
     A_{\alpha} \ := \ \bigoplus_{n\ge 0} A_{\alpha}(n)
 \end{equation*}
 into a unital commutative associative graded algebra, with  $A_{\alpha}(0)\cong \kk$. 
 All  of this  data, including the  power series $\sum_{n\ge 0}\dim A_{\alpha}(n)z^n$ encoding dimensions of  $A_{\alpha}(n)$, are invariants of recognizable series $\alpha$.
 \vspace{0.1in}

In the diagram of five categories and four functors 
\begin{equation*}
    \tfs \lra \kk\tfs\lra \vtfs_{\alpha} \lra \stfs_{\alpha} \lra \tfs_{\alpha}
\end{equation*}
one can get  from $\kk\tfs$ to  $\tfs_{\alpha}$ in one step, bypassing  
$\vtfs_{\alpha}$ and $\stfs_{\alpha}$,  
by taking the ideal of negligible morphisms in $\kk\tfs$ (for essentially the same  trace map, shown in  Figure~\ref{fig3_3}) and modding  out by it. It is convenient to introduce those  intermediate categories, though. For  instance, $\stfs_{\alpha}$ already has finite-dimensional hom spaces  and   allows  to define the analogue of  the Deligne category in our case.

\vspace{0.1in} 

{\it The Deligne category $\dtfs_{\alpha}$ and its gligible  quotient $\udtfs_{\,\alpha}$.}
The skein category  $\stfs_{\alpha}$  is a rigid symmetric monoidal $\kk$-linear category with  objects $n\in \Z_+$ and finite-dimensional hom spaces. 
We form the additive Karoubi  closure 
\begin{equation*}
    \dtfs_{\alpha} \ := \ \Kar(\stfs^{\oplus}_{\alpha})
\end{equation*}
by allowing formal finite direct sums of objects in $\stfs$, extending morphisms correspondingly, and then adding idempotents to get a Karoubi-closed  category. Category $\dtfs_{\alpha}$ plays the role of the Deligne category in  our construction.  

\vspace{0.1in}

In the Deligne category  $\dtfs_{\alpha}$ endomorphisms of an object $(n,e)$, where $e$ is an idempotent  endomorphism of $n$, inherit the trace map $\tr_{\alpha}$ into the ground  field. Consequently, category $\dtfs_{\alpha}$  also has a two-sided ideal  of  negligible morphisms  $J_{D,\alpha}$. Taking the quotient by  this ideal  
\begin{equation*}
    \udtfs_{\,\alpha} \ := \ \dtfs_{\alpha}/J_{D,\alpha}
\end{equation*}
gives us an idempotent-complete category with non-degenerate symmetric bilinear   forms on hom  spaces $\Hom(0,(n,e))$, where $(n,e)$ is an object  as above, and more  generally non-degenerate  bilinear pairings  on hom spaces
\begin{equation*}
    \Hom((n,e),(m,e'))\otimes \Hom((m,e'),(n,e)) \lra  \kk
\end{equation*}
where $e'$ is an  idempotent endomorphism of object $m$. Due to the symmetry between homs given by the contravariant  involution on all categories that we have considered so far (reflection about a horizontal line), the  above  bilinear pairings can be converted into  non-degenerate symmetric bilinear forms  on 
$\Hom((n,e),(m,e'))$ in $\udtfs_{\,\alpha}$. 

Category $\udtfs_{\,\alpha}$ is also equivalent  to the additive  Karoubi closure of  the category  $\tfs_{\alpha}$, see  the  commutative square  in (\ref{eq_seq_cd}).


\subsection{Summary  of the categories  and  functors} \label{subsec_summary} 

Below is a summary for each category  that has been considered. 

\begin{itemize}
    \item  $\tfs$: the category of thin flat  surfaces  (tf-surfaces). Its objects are non-negative integers and morphisms are thin  flat  surfaces. 
    \item $\kk\tfs$: this category has the  same objects as $\tfs$; its  morphisms are formal finite $\kk$-linear combinations of morphisms in $\tfs$. 
    \item  $\vtfs_{\alpha}$: in this  quotient category of $\kk\tfs$ we reduce morphisms to linear combinations  of  viewable cobordisms. Floating  connected components are  removed  by  evaluating them  via  $\alpha$.  
    \item $\stfs_{\alpha}$: to define this category,   specialize to recognizable $\alpha$ and add skein relations by modding out  by elements of the ideal  $I_{\alpha}$ in  $\kk[T_1,T_2]$ along each connected  component of a  surface ($T_1$  is a hole,  $T_2$  a  handle). Hom spaces in this category are finite-dimensional. 
    \item  $\tfs_{\alpha}$: the quotient  of  $\stfs_{\alpha}$ by the  ideal $J_{\alpha}$ of  negligible morphisms.  This category  is also equivalent  (even isomorphic) to the quotients   of $\kk\tfs$ and $\vtfs_{\alpha}$ by the corresponding ideals of negligible morphisms in  them. The trace pairing in $\tfs_{\alpha}$ between $\Hom(n,m)$ and $\Hom(m,n)$ is perfect. 
    \item $\dtfs_{\alpha}$: it is the  analogue  of the Deligne category obtained from $\stfs_{\alpha}$ by allowing finite direct sums of objects and then  adding  idempotents  as objects to get a Karoubi-closed category. 
    \item $\udtfs_{\,\alpha}$: the quotient  of $\dtfs_{\alpha}$ by the two-sided ideal of negligible  morphisms. This category is equivalent  to the additive  Karoubi closure of $\tfs_{\alpha}$ and  sits in  the bottom right corner of the  commutative square below.  
\end{itemize}

We arrange these  categories and   functors, when $\alpha$ is recognizable,
into  the following diagram: 

\begin{equation} \label{eq_seq_cd}
\begin{CD}
\tfs  @>>> \kk\tfs @>>> \vtfs_{\alpha} @>>> \stfs_{\alpha} @>>>  \dtfs_{\alpha} \\
@.   @. @.   @VVV   @VVV  \\    
 @.  @.   @.  \tfs_{\alpha}  @>>>  \udtfs_{\,\alpha}
\end{CD}
\end{equation}
All seven  categories  are rigid symmetric monoidal. All but the leftmost  category $\tfs$ are  $\kk$-linear. Except for the two categories on the far right, the  objects of  each category are non-negative integers. 
The four categories on the right  all  have finite-dimensional hom spaces. The  two categories on the far right are additive and Karoubi-closed. The four categories in the middle of the  diagram  are pre-additive but not additive.  

The arrows show functors between these   categories considered  in the paper. The square is commutative. An analogous diagram of functors and categories can be found in~\cite{KS} for  the  category of oriented 2D cobordisms in  place of $\tfs$.   

It is possible to go directly from $\kk\tfs$ to  $\tfs_{\alpha}$ by modding  out by  the ideal of negligible morphisms  in the former category. We found  it convenient to get to this  quotient  in several steps, introducing  categories  $\vtfs_{\alpha}$ and  $\stfs_{\alpha}$ along the way.

\vspace{0.07in} 
 
\emph{Remark:}  For possible  future use,  it may be  convenient to relabel the categories  above using shorter strings. For instance, writing  $\mc{S}$ (for ``surfaces'') in place of $\tfs$ we can rename the  categories as follows: 
\begin{equation} \label{eq_seq_cd_2}
\begin{CD}
\mc{S}  @>>> \kk\mc{S} @>>> \mc{VS}_{\alpha} @>>> \mc{SS}_{\alpha} @>>>  \mc{DS}_{\alpha} \\
@.   @. @.   @VVV   @VVV  \\    
 @.  @.   @.  \mc{S}_{\alpha}  @>>>  \underline{\mc{DS}}_{\,\alpha}
\end{CD}
\end{equation}

\vspace{0.07in} 

For convenience we wrote below short reminders of what these  categories are: 

\begin{equation}\label{eq_words}
\begin{gathered}
 \xymatrix{
\textrm{cobordisms}\ar[r] & \Bbbk\textrm{-linear} \ar[r] & \textrm{viewable} \ar[r] & \textrm{skein}\ar[d] \ar[r] &
{\textrm{Deligne (Karoubian)}} \ar[d] \\
& & & \textrm{gligible} \ar[r] & 
\textrm{gligible and Karoubian}
}
\end{gathered}
\end{equation}

If $\alpha$  is not recognizable, we can still  define categories $\vtfs_{\alpha}$, $\tfs_{\alpha}$ and $\udtfs_{\,\alpha}$ (in the streamlined notation, categories $\mc{VS}_{\alpha}$, $\mc{S}_{\alpha}$ and $\underline{\mc{DS}}_{\,\alpha}$), but  it is not clear whether  these categories may be  interesting for some such $\alpha$.

\vspace{0.1in} 

%
%

\section{Hilbert scheme and recognizable series}  
\label{sec_sample} 

\vspace{0.1in} 

{\it Recognizable series and points on the dual tautological bundle.}
Recognizable series $\alpha$ gives rise to the ideal $I_{\alpha}$  in $\kk[T_1,T_2]$  of finite codimension $k=d_{\alpha}$  
and the  quotient algebra $A_{\alpha}$  by this ideal, see formula (\ref{eq_A_alpha}) in Section~\ref{subset_skein}. This algebra  is commutative Frobenius and  comes with  two  generators $T_1,T_2$ and a non-degenerate  trace. The ideal $I_{\alpha}$ describes a point in  the Hilbert scheme  of codimension $k$ ideals in $\mathbb{A}^2=\mathrm{Spec}\, \kk[T_1,T_2]$, where 
\begin{equation*}
k=d_{\alpha}=\dim A_{\alpha}.
\end{equation*} 

Let us specialize to the ground field $\kk=\C$.  
 Denote by $\Rec_k$ the  set of recognizable series with the \emph{syntactic ideal} $I_{\alpha}$ of codimension $k$ and  refer to $\alpha\in\Rec_k$ as a \emph{recognizable  series of codimension} $k$. Let also  
\begin{equation*}
    \Rec := \bigsqcup_{k\ge 0} \Rec_k, \ \ \Rec_{\le n}:= \bigsqcup_{k\le n} \Rec_k. 
\end{equation*}

Consider the Hilbert scheme $H^{k}=\mathrm{Hilb}^k(\C^2)$ of $k$  points  in $\C^2$ or, equivalently, 
the scheme of  codimension $k$ ideals in $\C[T_1,T_2]$, see~\cite{N}. 

Denote by $\mcT_k$ the tautological bundle over $H^k$  whose  fiber over the point  associated to an ideal  $I$ of codimension $k$ is  the space $A_I=\C[T_1,T_2]/I$. Points of the dual bundle $\mcTcheck_k$ above a point  $I\in H^k$
describe elements of  $A_I^{\ast}=\Hom_{\C}(A_I,\C)$, that  is, linear functionals on $A_I.$ Let 
\begin{equation*}
    \pi  \ : \ \mcTcheck_k \lra H^k
\end{equation*}
be the projection of the bundle onto its base. 
For a point $p\in\mcTcheck_k$ the element $\pi(p)\in H^k$ is the projection of $p$  onto the base of the bundle, and we denote by $I_{\pi(p)}$ the corresponding codimension $k$ ideal of $\C[T_1,T_2]$. 

The point $p$ also defines a linear functional 
$\alpha_p$ on 
\begin{equation*}
    A_{\pi(p)} \ := \ \C[T_1,T_2]/I_{\pi(p)} , \ \ \alpha_p: A_{\pi(p)} \lra \C ,    
\end{equation*}
associated  to $p$. This functional lifts to a functional 
on $\C[T_1,T_2]$, which is  recognizable, contains $I_{\pi(p)}$ in its kernel, and has  codimension at most  $k$. The  latter functional (equivalently, recognizable  power series)  is also denoted  $\alpha_p$.

This functional has the associated ideal $I_p=I_{\alpha_p}\subset \C[T_1,T_2]$ of finite  codimension, the largest ideal in the kernel of functional  $\alpha_p$ on $\C[T_1,T_2]$. There  is an inclusion of ideals 
\begin{equation*}
I_{\pi(p)} \subset I_p. 
\end{equation*}
For  a generic point $p$ on $\mcTcheck_k$ this inclusion  is the equality  $I_{\pi(p)}=I_p$, but for some points $p$ the inclusion is  proper.

Another way to describe the ideal $I_p$ is to consider the  symmetric bilinear form $(~,~)_p$ on $A_{\pi(p)}$ given by 
\begin{equation*}
    (x,y)_p \ := \ \alpha_p(xy),  \ \  x,y\in A_{\pi(p)}.
    \end{equation*}
The  kernel  of the   form $(~,~)_p$ is an ideal $I_p'$ in  $A_{\pi(p)}$ that lifts to  the above ideal $I_p$ in $\C[T_1,T_2]$, and there is an isomorphism $I_p'\cong I_p/I_{\pi(p)}$. The  inclusion $I_{\pi(p)} \subset I_p$ is proper precisely when $I_p'$ is a nonzero ideal, that is, when the bilinear form $(~,~)_p$ is degenerate.  

These ideals are shown in  the diagram below, where the two squares on the left  are pull-backs. The bottom sequence is short exact, and the top row becomes exact upon replacing $I_{\pi(p)}$ by $0$. 
\begin{equation*}
\begin{gathered}
    \xymatrix{
    I_{\pi(p)} \ar@{^{(}->}[r] \ar[d] & I_p \ar[d] \ar@{^{(}->}[r] & \C[T_1, T_2] \ar@{->>}[r] \ar[d] & A_p \ar@{=}[d] \ar[r] & 0 \ar[d]\\
    0 \ar@{^{(}->}[r] & I_p^\prime \ar[r] & A_{\pi(p)} \ar@{->>}[r] & A_p \ar[r] & 0
    }
    \end{gathered}
\end{equation*}

Denote  by $D_k$ the subset of $\mcTcheck_k$ that  consists of  points $p$ such that the inclusion $I_{\pi(p)} \subset I_p $ is proper: 
\begin{equation*}
    D_k \ := \ \{ p\in \mcTcheck_k | I_{\pi(p)} \not= I_p\}.   
\end{equation*}
Recognizable power series $\alpha_p$ for $p\in \mcTcheck_k$ has codimension $k$ (in our  notations, $\alpha_p \in \Rec_k$) precisely when 
$p\in \mcTcheck_k \setminus D_k$. 

If  $p\in D_k$ so that  
\begin{equation*}
\mathrm{codim}(I_p)=m<k = \mathrm{codim}(I_{\pi(p)}), 
\end{equation*}
then recognizable power series  $\alpha_p$ has codimension $m$ strictly less than $k$  and $ \alpha_p \in \Rec_m$. For  example, if $p\in H^k\subset \mcTcheck_k$ is a  point in the zero section  of  $\mcTcheck_k$, so that the linear  map  $\alpha_p$ is identically zero, the ideal $I_p=\C[T_1,T_2]$ has zero codimension and $m=0$. A mild confusion exists in our notations in this case (and  in  this case only), for  then $p=\pi(p)$. 

\vspace{0.2in} 

Going the other way, to a recognizable series $\alpha$  with the  associated ideal $I_{\alpha}$ of codimension $d_{\alpha}=k$ as above we  associate  a point $p_{\alpha}$  of  $\mcTcheck_k$. It is the point in the fiber of  $\mcTcheck_k$ over the ideal $I_{\alpha}$ which describes  functional $\alpha$
on $\C[T_1,T_2]$ and the  induced functional on the quotient algebra  $A_{\alpha}=A_{I_{\alpha}}$. 
  
The  above discussion implies the proposition below. 
\vspace{0.1in}

\begin{prop} \label{prop_assign}
Assigning  $p_{\alpha}$ to $\alpha\in \Rec_k$ and $\alpha_p$ to $p\in \mcTcheck_k \setminus D_k$  establishes a bijection 
\begin{equation*} 
\Rec_k \ \cong \  \mcTcheck_k \setminus D_k.  
\end{equation*} 
\end{prop}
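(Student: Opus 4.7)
The plan is to verify separately that the two assignments $\alpha \mapsto p_{\alpha}$ and $p \mapsto \alpha_p$ land where the statement claims, and then to show that they are mutually inverse. Since both constructions are essentially tautological once the relevant ideals are correctly identified, the real content lies in tracking the relationship between $I_{\pi(p)}$, $I_p$, and $I_{\alpha}$ and in checking that ``codimension exactly $k$'' is preserved in both directions.

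First I would check that the forward map $\alpha \mapsto p_{\alpha}$ takes $\Rec_k$ into $\mcTcheck_k \setminus D_k$. Given $\alpha \in \Rec_k$, by definition $I_{\alpha}$ has codimension $k$, so it represents a point in $H^k$, and the induced functional $\overline{\alpha} : A_{\alpha} \to \C$ is a well-defined element of the fiber $A_{\alpha}^{\ast}$, which is the fiber of $\mcTcheck_k$ over $I_{\alpha}$. Thus $p_{\alpha} \in \mcTcheck_k$ with $\pi(p_{\alpha}) = I_{\alpha}$ and $I_{\pi(p_{\alpha})} = I_{\alpha}$. Since $I_{\alpha}$ is by construction the largest ideal of $\C[T_1,T_2]$ contained in $\ker(\alpha)$, unraveling the definition of $I_{p_{\alpha}}$ gives $I_{p_{\alpha}} = I_{\alpha} = I_{\pi(p_{\alpha})}$, hence $p_{\alpha} \notin D_k$.

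Next I would verify that the backward map $p \mapsto \alpha_p$ sends $\mcTcheck_k \setminus D_k$ into $\Rec_k$. For such $p$, the equality $I_{\pi(p)} = I_p$ holds by definition of $D_k$, and $I_p$ is the syntactic ideal of the lifted functional $\alpha_p$ on $\C[T_1,T_2]$. Since $\pi(p) \in H^k$, the ideal $I_{\pi(p)}$ has codimension exactly $k$; therefore $\alpha_p$ is recognizable of codimension $k$, i.e.\ $\alpha_p \in \Rec_k$. For the two compositions, starting with $\alpha \in \Rec_k$ and forming $\alpha_{p_{\alpha}}$ we get back the original functional on $\C[T_1,T_2]$ since $p_{\alpha}$ encodes $\alpha$ modulo $I_{\alpha}$ and lifting back through the surjection $\C[T_1,T_2] \twoheadrightarrow A_{\alpha}$ recovers $\alpha$; conversely, starting from $p \in \mcTcheck_k \setminus D_k$, the point $p_{\alpha_p}$ lies in the fiber over $I_{\alpha_p} = I_p = I_{\pi(p)}$ and represents the same functional $\overline{\alpha_p}$ on $A_{\pi(p)}$ as $p$ does, so $p_{\alpha_p} = p$.

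The only delicate step, and what I expect to be the one point worth spelling out carefully, is the identification $I_{p_{\alpha}} = I_{\alpha}$ used to show $p_{\alpha} \notin D_k$; this requires matching two superficially different definitions of the syntactic ideal, namely as the kernel of the module map $h$ in \eqref{eq_h_hom} on the one hand, and as the pullback to $\C[T_1,T_2]$ of the kernel $I'_{p_{\alpha}}$ of the bilinear form $(x,y) \mapsto \overline{\alpha}(xy)$ on $A_{\alpha}$ on the other. Once one observes that both can be rewritten as $\{z \in \C[T_1,T_2] \mid \alpha(zf) = 0 \text{ for all } f \in \C[T_1,T_2]\}$, the equality is immediate and the bijection follows.
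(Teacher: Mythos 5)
Your proof is correct and takes essentially the same approach as the paper, which simply observes that the bijection is immediate from the discussion preceding the proposition; you supply the bookkeeping that the paper leaves implicit, tracking the ideals $I_{\alpha}$, $I_{\pi(p)}$, and $I_p$ to verify that both maps land where claimed and are mutually inverse. The ``delicate step'' you single out can in fact be shortcut: since $\alpha_{p_{\alpha}} = \alpha$ directly (the lift of the induced functional is $\alpha$ itself), one gets $I_{p_{\alpha}} = I_{\alpha_{p_{\alpha}}} = I_{\alpha}$ without needing to reconcile the two descriptions of the syntactic ideal, though your reconciliation is also correct.
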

In  particular, $p_{\alpha_p}=p$ and $\alpha_{p_{\alpha}}=\alpha$ for  $p$ and $\alpha$ as in the proposition, so the two assignments are  mutually-inverse bijections. 
\hfill $\square$

Note that the two ideals coincide, $I_{\pi(p)}=I_p$, precisely  when  $\alpha_p$ is a nondegenerate trace map on  $A_{\pi(p)}$. In particular, in this case $A_{\pi(p)}$ is Frobenius. We obtain the following statement.
 
\begin{prop} Points $p\in \mcTcheck_k \setminus D_k$  classify isomorphism classes of data $(A,\epsilon,t_1,t_2)$: a commutative Frobenius  algebra $A$ over $\C$ of dimension $k$ with a non-degenerate trace $\epsilon$ and generators $t_1,t_2$ of $A$.  
\end{prop}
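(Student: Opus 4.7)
The plan is to compose the bijection of Proposition~\ref{prop_assign} with a bijection between $\Rec_k$ and isomorphism classes of quadruples $(A,\epsilon,t_1,t_2)$, an identification already informally asserted at the end of Section~\ref{subset_skein}. So the real work is to nail down the second bijection cleanly and then invoke Proposition~\ref{prop_assign}.

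First I would define the forward map from $\Rec_k$ to quadruples: given $\alpha \in \Rec_k$, set $A := A_\alpha = \C[T_1,T_2]/I_\alpha$, let $\epsilon$ be the functional induced on this quotient by $\alpha$, and let $t_i$ be the class of $T_i$. By construction $\dim_\C A = k$ and the elements $t_1,t_2$ generate $A$ as an algebra. Because $I_\alpha$ is by definition the kernel of the module map $h$ in (\ref{eq_h_hom}), i.e.\ the largest ideal of $\C[T_1,T_2]$ contained in $\ker\alpha$, the induced bilinear form $(x,y)\mapsto\epsilon(xy)$ on $A$ is nondegenerate, so $(A,\epsilon,t_1,t_2)$ is a quadruple of the required type.

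For the inverse, given $(A,\epsilon,t_1,t_2)$, the universal property of $\C[T_1,T_2]$ produces a surjection $\psi:\C[T_1,T_2]\twoheadrightarrow A$ with $\psi(T_i)=t_i$; set $\alpha := \epsilon\circ\psi$. Since $\ker\psi$ has codimension $k$ and is contained in $\ker\alpha$, the series $\alpha$ is recognizable with $\dim A_\alpha \le k$. The key technical step is to show $I_\alpha = \ker\psi$: for any ideal $J\subset\ker\alpha$, the image $\psi(J)$ is an ideal of $A$ on which $\epsilon$ vanishes, and nondegeneracy of the Frobenius pairing forces any such ideal of $A$ to be zero, so $J\subseteq\ker\psi$. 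Thus $I_\alpha=\ker\psi$ and $\alpha\in\Rec_k$. This nondegeneracy argument is where I expect the main (though mild) obstacle to lie; the rest is bookkeeping.

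Finally I would check that the two constructions are mutually inverse, up to the evident notion of isomorphism of quadruples (an algebra isomorphism $\phi:A\to A'$ with $\phi(t_i)=t_i'$ and $\epsilon'\circ\phi=\epsilon$, uniquely determined since the $t_i$ generate). Starting from $\alpha\in\Rec_k$, the round trip returns $\alpha$ on the nose. Starting from a quadruple, the surjection $\psi$ descends to an isomorphism $A_\alpha\xrightarrow{\sim}A$ that intertwines generators and traces, so the quadruple is recovered up to isomorphism. Composing this bijection $\Rec_k\cong\{\text{quadruples}\}/\!\sim$ with the bijection $\mcTcheck_k\setminus D_k\cong\Rec_k$ of Proposition~\ref{prop_assign} yields the stated classification.
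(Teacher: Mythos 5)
Your proof is correct and follows essentially the same route as the paper. The bijection $\Rec_k \cong \{\text{quadruples}\}/\!\sim$ that you spell out is exactly what the paper asserts (informally) at the end of Section~\ref{subset_skein}, and the paper's one-line justification of this proposition is simply the observation that $I_{\pi(p)}=I_p$ iff $\alpha_p$ is a nondegenerate trace on $A_{\pi(p)}$, i.e.\ iff $A_{\pi(p)}$ is Frobenius with the given trace — which, combined with Proposition~\ref{prop_assign}, is the same argument you give, just without factoring explicitly through $\Rec_k$. Your ``key technical step'' ($I_\alpha=\ker\psi$ via nondegeneracy killing the ideal $\psi(J)$) correctly fills in the detail the paper leaves implicit.
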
 
 
 Not every commutative Frobenius algebra can be generated  by two elements, of course. 
 
 Taking codimension $m\le k$ of $I_p$ into account, one gets the following statement. 
 
 \begin{prop} Associating $\alpha_p$ to $p\in \mcTcheck_k$ gives a  surjective map 
 \begin{equation*}
     \mcTcheck_k \ \lra \ \bigsqcup_{m=0}^k \Rec_m. 
 \end{equation*}
 \end{prop}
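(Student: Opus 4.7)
First I would verify well-definedness: for any $p \in \mcTcheck_k$, the functional $\alpha_p$ vanishes on $I_{\pi(p)}$, so $I_{\alpha_p} \supset I_{\pi(p)}$, hence $\mathrm{codim}(I_{\alpha_p}) \le k$. Thus $\alpha_p \in \Rec_m$ for some $m \le k$, and the map is well-defined.

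For surjectivity, the previous proposition already shows that every $\alpha \in \Rec_k$ is hit by some $p \in \mcTcheck_k \setminus D_k$. So I only need to handle the case $\alpha \in \Rec_m$ with $m < k$. My strategy is to enlarge the syntactic subscheme $\mathrm{Spec}(A_\alpha) \subset \A^2$ by appending $k-m$ disjoint simple points.

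Concretely, given $\alpha \in \Rec_m$, the vanishing locus $V(I_\alpha) \subset \A^2(\C)$ is finite. Since $\C^2$ is infinite, I can choose $k-m$ distinct points $q_1,\dots,q_{k-m} \in \A^2(\C) \setminus V(I_\alpha)$ with maximal ideals $\mathfrak{m}_{q_1},\dots,\mathfrak{m}_{q_{k-m}}$. Set
\begin{equation*}
    I' \ := \ I_\alpha \cap \mathfrak{m}_{q_1} \cap \cdots \cap \mathfrak{m}_{q_{k-m}}.
\end{equation*}
Since the ideals $I_\alpha, \mathfrak{m}_{q_1},\dots,\mathfrak{m}_{q_{k-m}}$ are pairwise comaximal, the Chinese Remainder Theorem gives
\begin{equation*}
    \C[T_1,T_2]/I' \ \cong \ A_\alpha \times \C^{k-m},
\end{equation*}
so $I'$ has codimension exactly $k$, and $I' \subset I_\alpha$. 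This $I'$ is a point of $H^k$.

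On the quotient $\C[T_1,T_2]/I' \cong A_\alpha \times \C^{k-m}$, define a linear functional by $\alpha$ on the first factor and zero on the second; this defines a point $p \in \mcTcheck_k$ in the fiber over $I'$. By construction the composed functional $\C[T_1,T_2] \twoheadrightarrow \C[T_1,T_2]/I' \to \C$ equals $\alpha$ on every polynomial, so $\alpha_p = \alpha$ as elements of $\C[T_1,T_2]^\ast$. (The syntactic ideal of $\alpha_p$ remains $I_\alpha$ because the induced bilinear form on $A_\alpha$ is already nondegenerate, while the added $\C^{k-m}$ factor contributes a radical contained in $I_\alpha/I'$.) This exhibits every $\alpha \in \bigsqcup_{m \le k} \Rec_m$ as $\alpha_p$ for a suitable $p$, proving surjectivity.

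The only subtle point is confirming that $\alpha_p$ indeed has syntactic ideal equal to $I_\alpha$ (so that we land in $\Rec_m$ and not in some $\Rec_{m'}$ with $m' > m$), but this follows immediately from the fact that the bilinear form $(x,y) \mapsto \alpha(xy)$ on $A_\alpha$ is nondegenerate by the very definition of $I_\alpha$, combined with $I_\alpha \subset \ker(\alpha_p)$. Everything else is a routine application of CRT and the fiber description of $\mcTcheck_k$, so I expect no serious obstacle.
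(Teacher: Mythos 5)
Your proof is correct. The paper does not spell out a proof; it only remarks that the statement follows from ``taking codimension $m\le k$ of $I_p$ into account,'' which settles well-definedness (that every $\alpha_p$ lands in some $\Rec_m$ with $m\le k$, since $I_{\pi(p)}\subset I_{\alpha_p}$) but leaves surjectivity implicit. Your CRT construction is exactly the kind of argument needed: given $\alpha\in\Rec_m$, any codimension-$k$ ideal $I'\subset I_\alpha$ together with the induced functional on $\C[T_1,T_2]/I'$ gives a preimage, and intersecting $I_\alpha$ with $k-m$ fresh maximal ideals produces such an $I'$ with the codimension controlled exactly. The final paragraph of your proof is unnecessary: once you know $\alpha_p=\alpha$ as elements of $\C[T_1,T_2]^\ast$, the syntactic ideal of $\alpha_p$ is literally $I_\alpha$ by definition of the syntactic ideal, so there is nothing further to confirm; your remark about the radical of the form and the $\C^{k-m}$ factor, while not wrong in spirit, is redundant and slightly muddies an otherwise clean argument.
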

 
 Restricting this map  to $D_k$ gives a surjective map 
 \begin{equation*}
     D_k \ \lra \ \bigsqcup_{m=0}^{k-1} \Rec_m,
 \end{equation*}
 while  on the complement to $D_k$ this map is the bijection in Proposition~\ref{prop_assign}. 
 
 \vspace{0.07in}
 
 \emph{Example:} The set $\Rec_0$ is a single point corresponding  to the zero series  $\alpha$, $\alpha_{i,j}=0, i,j\in\Z_+$. The ideal for this  point is the entire algebra $\C[T_1,T_2]$. Points of $\Rec_1$ correspond to hyperplanes (codimension one subspaces) that are ideals $J=(T_1-\lambda_1,T_2-\lambda_2)$ together  with  a nonzero functional on $\C\cong \C[T_1,T_2/J$, determined by its  value $\lambda$ on $1$. Consequently, we can  identify $\Rec_1\cong \C\times \C \times \C^{\times}$ by sending  a point in $\Rec_1$  to the triple $(\lambda_1,\lambda_2,\lambda)$.

\vspace{0.1in} 

{\it  Set-theoretic divisor $D_k$.}
Quasi-projective variety $H^k$ admits an open cover by affine sets $U_{\lambda}$, over all partitions  $\lambda$ of $k, $
see Theorem 18.4 in~\cite[Section 18.4]{MiS}, for example. 
Place partition $\lambda$ in the corner of the first quadrant of the plane so that it covers nodes $(i,j)$ of the square  lattice with $0\le i < \lambda_{j+1}.$ In particular,  it covers $\lambda_1$ nodes on the $x$-axis.

Let $T_{\lambda}$ be the set of monomials $T_1^iT_2^j$ with $(i,j)\in  \lambda$ (in particular, $|T_{\lambda}|=k$) and $T'_{\lambda}$ be  the set of complementary monomials, for pairs $(i,j)\in \Z_+\times \Z_+\setminus \lambda$. Open set $U_{\lambda}\subset H^k$ consists of ideals $I$ such that monomials in $T_{\lambda}$ descend to a basis of $A_I=\C[T_1,T_2]/I$, see~\cite[Section 18.4]{MiS} for details. 

The vector bundle $\mcTcheck_k\lra H^k$ can be trivialized  over $U_\lambda$, being naturally isomorphic to the trivial bundle of functions on the set $T_{\lambda}$. A functional $p$ on $\C[T_1,T_2]/I_{\pi(p)}$ is determined by  its  values on the basis elements  $t\in T_{\lambda}$ of this quotient  space. 

To describe the points $p\in \mcTcheck_k$ with $\pi(p)\in U_{\lambda}$ consider an arbitrary linear functional $\alpha\in (\C T_{\lambda})^{\ast}$, given by its values 
\begin{equation*}
\alpha(T_1^iT_2^j)\in  \C, \ \mathrm{for} \   T_1^iT_2^j\in T_{\lambda},
\end{equation*} 
and an ideal $I\in U_{\lambda}$. Such  pair $(\alpha, I)$ trivializes a pair $(p,\pi(p))$ with  $\pi(p)\in  U_{\lambda}$. For a  pair $u,v\in T_{\lambda}$ take the product $uv$, view it as an element  of $A_I=\C[T_1,T_2]/I$, and then write it as a linear combination of elements in $T_{\lambda}$, allowing to apply $\alpha$  to it explicitly. 

Consider a  matrix $M_{\alpha}$  where  rows  and columns  are labelled by elements of $T_{\lambda}$  and put $\alpha(uv)$ as the entry at the intersection of row $u$ and column $v$. 

\begin{prop} Point $p$  with $\pi(p)\in U_{\lambda}$ is in the subset $D_k$ iff $\det(M_{\alpha})=0$.
\end{prop}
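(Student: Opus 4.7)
The plan is to identify $M_\alpha$ as the Gram matrix of the bilinear form $(x,y)_p = \alpha_p(xy)$ on $A_{\pi(p)}$, expressed in the basis $T_\lambda$, and then invoke the standard fact that a bilinear form on a finite-dimensional vector space is nondegenerate if and only if its Gram matrix in any basis has nonzero determinant.

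First I would verify the Gram matrix interpretation. The trivialization of $\mcTcheck_k$ over $U_\lambda$ identifies the fiber at $I = I_{\pi(p)}$ with $(\C T_\lambda)^\ast$, since the images of the monomials $T_\lambda$ form a $\C$-basis of $A_{\pi(p)} = \C[T_1,T_2]/I$. For basis elements $u,v \in T_\lambda$, writing the product $uv \in A_{\pi(p)}$ as a $\C$-linear combination of elements of $T_\lambda$ (reducing modulo $I$) and applying $\alpha_p$ yields exactly $\alpha_p(uv)$. Thus the $(u,v)$-entry of $M_\alpha$ equals $(u,v)_p$, so $M_\alpha$ is the Gram matrix of the symmetric bilinear form $(\cdot,\cdot)_p$ on $A_{\pi(p)}$ in the basis $T_\lambda$.

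Next I would combine this with the characterization of $D_k$ already developed in the text. By the description of $I_p'$ as the kernel of $(\cdot,\cdot)_p$ on $A_{\pi(p)}$, together with the isomorphism $I_p' \cong I_p/I_{\pi(p)}$, we have
\begin{equation*}
    p \in D_k \iff I_{\pi(p)} \subsetneq I_p \iff I_p' \neq 0 \iff (\cdot,\cdot)_p \text{ is degenerate}.
\end{equation*}
Since $(\cdot,\cdot)_p$ is degenerate on a finite-dimensional space exactly when its Gram matrix is singular, we conclude
\begin{equation*}
    p \in D_k \iff \det(M_\alpha) = 0,
\end{equation*}
which completes the proof.

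There is no real obstacle here; the result is essentially a reformulation of the definitions, and the only thing to watch is that the trivialization of $\mcTcheck_k$ over $U_\lambda$ is set up so that the functional $\alpha$ genuinely agrees with $\alpha_p$ on the basis $T_\lambda$, which is immediate from the construction of the trivialization.
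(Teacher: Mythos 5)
Your proof is correct and follows essentially the same route as the paper: identify $M_\alpha$ as the Gram (Hankel) matrix of the bilinear form $(x,y)_p = \alpha_p(xy)$ on $A_{\pi(p)}$ in the basis $T_\lambda$, and then invoke the standard fact that such a form is nondegenerate iff its Gram matrix is invertible. You spell out the intermediate equivalences $p\in D_k \iff I_{\pi(p)}\subsetneq I_p \iff I_p'\neq 0 \iff (\cdot,\cdot)_p$ degenerate a bit more explicitly than the paper does, but these are exactly the definitions the paper set up just above the proposition, so there is no substantive difference in approach.
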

\begin{proof}
   Matrix $M_{\alpha}$ is the {\it Gram} or {\it Hankel matrix} of the bilinear form $(x,y)=\alpha(xy)$ on the associative algebra $A_I$ in the  basis $T_{\lambda}$. A bilinear  form on a finite-dimensional algebra $B$ given by the composition of the multiplication with a fixed linear functional on $B$ is non-degenerate exactly when its  Hankel matrix with respect to some (equivalently, any) basis is non-degenerate,  that is, has a non-zero determinant. 
\end{proof}

Condition that $\det(M_{\alpha})=0$ is locally  a codimension one condition (given by  a single equation), unless the determinant  is identically zero on points $(p,\pi(p))$ with $\pi(p)$ on some irreducible component of  the open subset $U_{\lambda}$ of the Hilbert scheme. To see that the latter case does not happen, observe that a ``generic'' point $I$ on the Hilbert scheme $H^k$ corresponds to a semisimple  quotient (no nilpotent elements in $\C[T_1,T_2]/I$),  with the quotient algebra isomorphic to the product of $k$  fields, 
\begin{equation*}
    \C[T_1,T_2]/I \ \cong \C\times \C\times \dots\times \C. 
\end{equation*}
On this  quotient an open subset  of linear  functionals are non-degenerate, with the  associated bilinear forms having trivial kernels. Indeed, a functional $\alpha$ on the algebra $\prod_{i=1}^k \C$ is non-degenerate iff each of its $k$ coefficients is non-zero.

\vspace{0.1in}

These observations imply the following result. 

\begin{prop}  $D_k$ is a set-theoretic divisor on the variety $\mcTcheck_k$. 
\end{prop}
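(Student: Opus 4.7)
The plan is to leverage the local description obtained in the preceding proposition: on each affine chart $\pi^{-1}(U_\lambda) \cong U_\lambda \times \C^{T_\lambda}$ of the bundle $\mcTcheck_k \to H^k$, the subset $D_k$ is the zero locus of the single regular function $\det(M_\alpha)$. Gluing the resulting codimension-one zero subschemes over the open cover $\{U_\lambda\}_{\lambda \vdash k}$ of $H^k$ then exhibits $D_k$ as a set-theoretic divisor on $\mcTcheck_k$, provided that on each chart this defining equation is not identically zero.

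First I would record that $\det(M_\alpha)$ is in fact a regular function on the chart. Each product $uv$ with $u,v\in T_\lambda$ reduces modulo $I$ to a unique linear combination $uv = \sum_{w\in T_\lambda} c^{uv}_w(I)\,w$ in which the structure constants $c^{uv}_w$ depend polynomially on the coordinates of $U_\lambda$. Hence $\alpha(uv) = \sum_w c^{uv}_w(I)\,\alpha(w)$ is polynomial in both the base coordinates of $U_\lambda$ and the linear fiber coordinates $\alpha(w)$, $w \in T_\lambda$, and so is $\det(M_\alpha)$. Its zero locus on $\pi^{-1}(U_\lambda)$ coincides set-theoretically with $D_k \cap \pi^{-1}(U_\lambda)$ by the previous proposition.

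The crux is to show that $\det(M_\alpha)$ does not vanish identically on any component of $\pi^{-1}(U_\lambda)$. By Fogarty's theorem, $H^k = \mathrm{Hilb}^k(\mathbb{A}^2)$ is smooth and irreducible of dimension $2k$; hence $\mcTcheck_k$, being a vector bundle over it, is also irreducible, as is its restriction to $U_\lambda$. It therefore suffices to exhibit a single point $(p,I)\in \pi^{-1}(U_\lambda)$ at which $\det(M_\alpha)\neq 0$. The radical locus of $H^k$, parametrizing unordered configurations of $k$ distinct points in $\mathbb{A}^2$, is open and dense, so it meets the nonempty open $U_\lambda$. Choosing any such reduced $I$ yields $A_I \cong \C^k$, which is Frobenius; any functional assigning nonzero values to all $k$ primitive idempotents produces a nondegenerate bilinear form, supplying the desired point.

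The main obstacle is mild: one must know that $U_\lambda$ meets the radical locus, which is immediate from the irreducibility of $H^k$ combined with density of that locus. The local divisors $\{\det(M_\alpha)=0\}$ on the different charts $\pi^{-1}(U_\lambda)$ automatically agree on overlaps, since they all describe $D_k$ restricted to the corresponding open piece, and therefore patch to a well-defined set-theoretic divisor on $\mcTcheck_k$.
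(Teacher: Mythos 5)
Your argument is correct and matches the paper's proof in essence: both show that on each chart $\pi^{-1}(U_\lambda)$ the set $D_k$ is cut out by the single regular function $\det(M_\alpha)$, and both rule out identical vanishing by passing to a reduced ideal $I$, where $A_I \cong \C^k$ and an open set of functionals is nondegenerate. The only difference is that you make explicit two facts the paper leaves implicit — the polynomial dependence of the structure constants $c^{uv}_w$ on the coordinates of $U_\lambda$, and Fogarty's theorem on the smoothness and irreducibility of $\mathrm{Hilb}^k(\mathbb{A}^2)$ (which the paper sidesteps by phrasing the argument per irreducible component of $U_\lambda$).
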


It is straightforward to check that $D_k$ comes from an  actual divisor on $\mcTcheck_k$. 
For a finite-dimensional $\C$-vector space $V$ define a one-dimensional vector space
\begin{equation*}
\det\, V \ := \  (\Lambda^{\mathrm{top}} V)^\vee=\Lambda^{\mathrm{top}}(V^\vee).
\end{equation*}
The determinant $\det\,\ha$ of a bilinear form $\ha\colon V\otimes V\rightarrow \C$ is an element $\det\,\ha\in (\det\, V)^{\otimes 2}$ defined as the determinant of the matrix of 
$\ha$. Namely,  if $e_1,\ldots ,e_k$ is a basis of $V$ and $e^1,\ldots,e^k$ is the dual basis in $V^\vee$, then $e^1\wedge\cdots\wedge e^k$ is a basis in the one-dimensional space $\det\,V$ and 
\[\det\,\ha := \det || \ha(e_i,e_j)||\; (e^1\wedge\cdots \wedge e^k)^{\otimes 2}.\]

A point $p\in\mcTcheck_k$ defines a symmetric bilinear form $\ha_p(x,y) :=\alpha_p(xy)$ on the fiber $\mcT_{\pi(p)}=I_{\pi(p)}$ of the tautological bundle. The determinant of this form is an element of $(\det\,\mcT_{\pi(p)})^{\otimes 2}$. Hence the pullback line bundle 
\begin{equation*}
\pi^*\left((\det\mcT)^{\otimes 2}\right)\longrightarrow \mcTcheck_k
\end{equation*}
over  $\mcTcheck_k$ has a canonical section $\sigma_{\det}$ given by  $\sigma_{\det}(p) := \det\,\ha_p$. The set $D_k$ is the divisor of zeroes of this section.

\begin{cor}  $D_k$ is the divisor of zeros of the section $\sigma_{\det}$. 
\end{cor}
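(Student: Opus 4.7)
The plan is to identify $D_k$ with the zero locus of $\sigma_{\det}$ set-theoretically, then upgrade the identification to an equality of divisors using the fact, already verified in the preceding proposition, that the vanishing locus is a codimension-one condition on each component of $\mcTcheck_k$.

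First I would unwind the definitions. By construction, $\sigma_{\det}(p) = \det\,\ha_p \in (\det\,\mcT_{\pi(p)})^{\otimes 2}$, and this element vanishes precisely when the bilinear form $\ha_p(x,y) = \alpha_p(xy)$ on the fiber $A_{\pi(p)} = \C[T_1,T_2]/I_{\pi(p)}$ is degenerate. On the other hand, $p \in D_k$ was defined by the strict inclusion $I_{\pi(p)} \subsetneq I_p$, and the earlier discussion around the ideal $I_p' \cong I_p/I_{\pi(p)}$ identified $I_p'$ with the kernel (radical) of the bilinear form $(~,~)_p = \ha_p$ on $A_{\pi(p)}$. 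Thus $p \in D_k$ iff $I_p' \neq 0$ iff $\ha_p$ is degenerate iff $\sigma_{\det}(p) = 0$, yielding the set-theoretic equality of $D_k$ with the zero locus $Z(\sigma_{\det})$.

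Next I would argue that $\sigma_{\det}$ is a nonzero section on each irreducible component of $\mcTcheck_k$, so that its zero locus is a genuine Cartier divisor. This is exactly the content of the argument preceding the previous proposition: a generic point of $H^k$ corresponds to a reduced subscheme, whose coordinate ring is a product of $k$ copies of $\C$, on which a generic linear functional is non-degenerate. Lifting to $\mcTcheck_k$, generic fibers of $\pi$ contain points $p$ with $\ha_p$ non-degenerate, so $\sigma_{\det}$ does not vanish identically on any component. Combined with the local formula in the affine chart $U_\lambda$, where $\sigma_{\det}$ is represented by $\det(M_\alpha)$ (a single regular function), we conclude that $Z(\sigma_{\det})$ is a Cartier divisor coinciding set-theoretically with $D_k$.

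The only mildly delicate point is the scheme-theoretic equality, since in principle $\sigma_{\det}$ could vanish to higher order along some components of $D_k$; however, for the statement as formulated (``$D_k$ is the divisor of zeros''), it suffices to take $D_k$ with the scheme structure induced from $\sigma_{\det}$, i.e.\ the vanishing scheme of $\det(M_\alpha)$ in each chart $U_\lambda$. I expect no real obstacle beyond carefully matching the local formulas for $\sigma_{\det}$ in the trivialization of $\mcTcheck_k|_{U_\lambda}$ with the Hankel matrix determinant $\det(M_\alpha)$ already used in the preceding proposition, which makes the corollary a direct packaging of that proposition.
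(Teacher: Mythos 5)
Your proposal is correct and follows essentially the same route as the paper: unwind the definitions to see that $\sigma_{\det}(p)=0$ iff the form $\ha_p$ is degenerate iff $I_p'\neq 0$ iff $p\in D_k$, then invoke the preceding genericity argument to see that $\sigma_{\det}$ is not identically zero on any component, so its zero locus is a genuine divisor. The paper treats the corollary as an immediate restatement of the paragraph preceding it, and your unwinding reproduces exactly that reasoning, including correctly matching the local representative $\det(M_\alpha)$ in the chart $U_\lambda$ with $\sigma_{\det}$.
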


\vspace{0.1in} 

Each point  of  $\mcTcheck_k\setminus  D_k$ gives  rise to recognizable series $\alpha$  in two variables and  to the  corresponding rigid symmetric monoidal categories, as discussed in the Section~\ref{sec_linear}  and summarized in Section~\ref{subsec_summary}, including category $\tfs_{\alpha}$, the  Deligne  category $\dtfs_{\alpha}$ and  its gligible quotient $\udtfs_{\alpha}$. It  may be interesting to  understand these categories for various $\alpha$'s,  including finding the analogue of the classification result from~\cite{KKO} on when the category $\udtfs_{\,\alpha}$ is semisimple.

 %
 %

\section{Modifications}\label{sec_modif}


\subsection{Adding closed surfaces}\label{subsec_adding} 

$\quad$

\vspace{0.1in} 

Category $\tfs$ can be enlarged to a category $\SS$ with morphisms -- oriented 2D cobordisms  (surfaces)  with corners between oriented 1D manifolds  with corners. Extensions of 2D TQFTs to  this category have been widely studied~\cite{MS,LP,C,SP}. An oriented 1D manifold  with corners is diffeomorphic to a  disjoint union of finitely-many oriented intervals and  circles. We adopt a minimalist approach and  choose one manifold for each such  diffeomorphism class. Consequently, objects of $\SS$ are pairs $\undern = (n_1,n_2)$ of non-negative integers, and an object $\undern$ is represented  by a  fixed disjoint  union $W(\undern)=W(n_1,n_2)$ of $n_1$ intervals and $n_2$ circles. Morphisms from $\undern$  to  $\underm=(m_1,m_2)$ are compact oriented 2D cobordisms $M$, possibly with corners, with both \emph{horizontal} and  \emph{side} boundary and  corners  where these two different boundary types  meet: 
\begin{equation*}
    \partial M = \partial_h M \cup \partial_v M, \ \partial_h M =W(m_1,m_2) \sqcup (-W(n_1,n_2)).  
\end{equation*}
Cobordisms that  are diffeomorphic  rel boundary define  the same  morphisms. Category $\SS$ contains  $\tfs$ as a subcategory. 

$\SS$ is a rigid symmetric monoidal category, with self-dual objects. The unit  object $\one$ is the empty one-manifold  $W(0,0)$. Its endomorphism monoid is freely generated  by diffeomorphism types of compact connected surfaces with boundary. The latter are classified by surfaces $S_{\ell,g}$ with $\ell$ boundary components and  of  genus $g$, one  for each pair $(\ell,g)$, $\ell,g\in \Z_+$. The difference with endomorphisms of the  unit object of $\tfs$ is that in $\SS$ closed surfaces are  allowed, which corresponds to $\ell=0$ and  surfaces  $S_{0,g}$, over all $g\in \Z_+$.

Multiplicative evaluations $\beta$ of endomorphisms of the  unit object are  again encoded by a power series
\begin{equation}\label{eq_Z_rat_4}
    \tZ_{\beta}(T_1,T_2) = \sum_{k,g\ge 0}\beta_{k,g} T_1^k T_2^g, \ \ 
     \beta=(\beta_{k,g})_{k,g\in \Z_+}, \ \ \alpha_{k,g} \in \kk , 
\end{equation} 
with the first index shifted  by $1$ compared to  evaluations for $\tfs$. We  changed the label from $\alpha$ in evaluations in $\tfs$ to $\beta$ in $\SS$  to make it  easier to compare evaluations in these  two categories. Now the  coefficient  
\begin{equation*}
\beta_{k,g} = \beta(S_{k,g})
\end{equation*} 
is the  evaluation of connected genus $g$  surface with $k$ boundary components  rather than with $k+1$ components as in the  $\tfs$ case, see earlier. 

To relate these two  power series encodings, in formulas (\ref{eq_Z_rat_2}) and  (\ref{eq_Z_rat_1}) versus  (\ref{eq_Z_rat_4}), start with $Z_{\alpha}(T_1,T_2)$ as in (\ref{eq_Z_rat_1}) and also form a one-variable power  series 
\begin{equation*}
    Z_{\gamma}(T_2)=\sum_{k\ge 0} \gamma_k T_2^k, \ \ \gamma_k \in \kk.  
\end{equation*}
To the pair $(Z_{\alpha},Z_{\gamma})$ assign the series 
\begin{equation} \label{eq_alpha_bg}
 \tZ_{\beta}(T_1,T_2) = T_1 Z_{\alpha}(T_1,T_2) + Z_{\gamma}(T_2). 
\end{equation}
Adding coefficients of $Z_{\gamma}$ to the data provided by  $Z_{\alpha}$ precisely means  that we now include evaluations  of  closed surfaces, via coefficients $\gamma_k$ (for a closed surface genus $k$). The scaling factor $T_1$ in the formula is needed to match the discrepancy in the evaluation conventions in the two  categories $\tfs$ and  $\SS$. Formula (\ref{eq_alpha_bg}) gives a bijection between series encoded by $\beta$  and those encoded by $(\alpha,\gamma)$. Starting from $\tZ_{\beta}$, one recovers $Z_{\alpha}$ and $Z_{\gamma}$ as 
\begin{eqnarray*}
 Z_{\gamma}(T_2) & = & \tZ_{\beta}(0,T_2)  \\
 Z_{\alpha}(T_1,T_2)  & = &  (\tZ_{\beta}(T_1,T_2)-\tZ_{\beta}(0,T_2))/T_1. 
\end{eqnarray*}

From $\SS$ pass to its $\kk$-linearization $\kk\SS$ by allowing finite $\kk$-linear combinations of morphisms in $\SS$. 
Given series $\beta$, we can define analogues of categories $\vtfs_{\alpha}$  and    $\tfs_{\alpha}$ in (\ref{eq_seq_cd}). Denote these new  categories by $\vSS_{\beta}$ and $\SS_{\beta}$:
\begin{itemize}
    \item In $\vSS_{\beta}$ one evaluates floating  components to  elements of $\kk$ via $\beta$. A connected component is  \emph{floating} if it  has no  horizontal boundary. 
    \item To form category $\SS_{\beta}$ we quotient category $\vSS_{\beta}$  (alternatively, category $\kk\SS$) by the  two-sided ideal of \emph{negligible morphisms}, defined in the same way as for $\tfs$.
\end{itemize}

We say that evaluation $\beta$ (or series  $\tZ_{\beta}$) is \emph{recognizable} if category $\SS_{\beta}$ have finite-dimensional hom spaces.

\begin{prop} \label{prop_recog_beta}
$\beta$ is recognizable iff  the power series $\tZ_{\beta}$ has the form
\begin{equation}\label{eq_recog_10}
    \tZ_{\beta}(T_1,T_2) = \frac{\wtP(T_1,T_2)}{\wtQ_1(T_1)\wtQ_2(T_2)},
\end{equation}
where $\wtQ_1(T_1),\wtQ_2(T_2)$ are one-variable polynomials and $\wtP(T_1,T_2)$ is a two-variable polynomial, all with coefficients in the field $\kk$.  
\end{prop}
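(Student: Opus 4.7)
The plan is to reduce the proposition to the already-established Proposition~\ref{prop_recog_alpha} for $\alpha$, combined with the classical one-variable rational series criterion for $\gamma$. The bridge is the bijection (\ref{eq_alpha_bg}), which writes $\tilde{Z}_\beta = T_1 Z_\alpha + Z_\gamma$.

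First I would verify the purely algebraic equivalence: the series $\tilde{Z}_\beta$ has the form (\ref{eq_recog_10}) if and only if $Z_\alpha(T_1,T_2)=P(T_1,T_2)/(Q_1(T_1)Q_2(T_2))$ and $Z_\gamma(T_2)=R(T_2)/S(T_2)$ simultaneously. One direction is straightforward from
\begin{equation*}
Z_\gamma(T_2)=\tilde{Z}_\beta(0,T_2), \qquad Z_\alpha(T_1,T_2)=\frac{\tilde{Z}_\beta(T_1,T_2)-\tilde{Z}_\beta(0,T_2)}{T_1},
\end{equation*}
evaluated on the hypothesized rational expression; the opposite direction follows by adding $T_1Z_\alpha+Z_\gamma$ over a common denominator of the form (polynomial in $T_1$)$\cdot$(polynomial in $T_2$). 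By Proposition~\ref{prop_recog_alpha}, the condition on $Z_\alpha$ is equivalent to $\alpha$ being recognizable in the $\tfs$ sense; and by the classical one-variable criterion (see \cite{F}), rationality of $Z_\gamma(T_2)$ is equivalent to the ordinary recognizability of the one-variable sequence $\gamma$.

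Next I would show that $\beta$ is recognizable (i.e.\ $\SS_\beta$ has finite-dimensional hom spaces) if and only if both $\alpha$ and $\gamma$ are recognizable in their respective senses. For the forward implication, $\tfs$ embeds as the full monoidal subcategory of $\SS$ on objects $(n_1,0)$; restriction of $\beta$ to this subcategory coincides with $\alpha$, so the canonical functor $\tfs_\alpha\to\SS_\beta$ is fully faithful and finite-dimensionality upstairs forces $\dim\Hom_{\tfs_\alpha}(0,1)<\infty$, whence $\alpha$ is recognizable. Applying the same argument to the full subcategory on objects $(0,n_2)$, which is the ordinary $\Cob_2$ category with evaluation $\gamma$, forces $\gamma$ to define a finite-dimensional commutative Frobenius algebra on the circle and hence to be recognizable.

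For the converse I would mirror the skein construction of Section~\ref{subset_skein} inside $\SS$. Given recognizable $\alpha$ and $\gamma$, let $I_\alpha\subset\kk[T_1,T_2]$ be the syntactic ideal of $\alpha$ and $q_\gamma(T_2)\in\kk[T_2]$ the minimal polynomial annihilating the one-variable sequence $\gamma$. Build a skein category by imposing the $I_\alpha$ relations on any connected component carrying at least one side-boundary circle (tf-like components), and the single relation $q_\gamma(b_2^\circ)=0$ on closed-type components (those with no side boundary), where $b_2^\circ$ denotes the handle endomorphism on such a component. Using the classification of morphisms of $\SS$ — splitting a cobordism into viewable and floating pieces, then further splitting each connected piece according to whether it contains any side boundary — one shows as in Section~\ref{subset_skein} that each hom space is spanned by finitely many minimal cobordisms decorated with $B_\alpha$-companions on tf-type components and with bounded powers of the genus generator on closed-type components. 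Passing to the gligible quotient then yields $\SS_\beta$ with finite-dimensional homs.

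The main obstacle I anticipate is the bookkeeping for ``mixed'' connected components whose horizontal boundary contains both intervals and circles, and whose topology simultaneously records a side-boundary count, a genus, and intersections with two flavors of horizontal boundary. One must check that the $I_\alpha$ and $q_\gamma$ skein relations propagate consistently across these components, but the decomposition (\ref{eq_alpha_bg}) of $\tilde{Z}_\beta$ precisely encodes that $\beta$ evaluates the ``$\ell+1\ge 1$ boundary component'' topology and the ``closed, $\ell=0$'' topology independently, so the two families of relations do not interact and consistency reduces to the two already-established cases.
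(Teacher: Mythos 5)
Your overall strategy — reduce the statement to the already-proved two-variable criterion for $\alpha$ plus the classical one-variable criterion for $\gamma$, bridged by $\tZ_\beta = T_1 Z_\alpha + Z_\gamma$ — is genuinely different from the paper's. The paper proves Proposition~\ref{prop_recog_beta} directly by treating $\beta$ as a two-variable series with its own syntactic ideal $I_\beta\subset\kk[T_1,T_2]$ and rerunning the argument of Proposition~\ref{prop_recog_alpha} essentially verbatim; the splitting into $\alpha$ and $\gamma$ is stated afterwards as a corollary, not used in the proof. Your algebraic step (rationality of $\tZ_\beta$ in the form (\ref{eq_recog_10}) iff $Z_\alpha$ and $Z_\gamma$ are both rational in their respective forms) is correct.

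However, your converse direction has a genuine gap. You propose imposing the skein relation $q_\gamma(b_2^\circ)=0$ on ``closed-type'' components. If this is imposed only on floating components that are truly closed, then nothing in your relation set controls the genus of a \emph{viewable} component whose side boundary is empty (for example a one-holed genus-$g$ surface representing a morphism $\one\to(0,1)$), so the skein category's hom spaces remain infinite-dimensional and the argument does not close. If instead you try to impose $q_\gamma$ on viewable components without side boundary, the relation becomes incompatible with the $\beta$-evaluation: such a component can be closed off by a cap that \emph{does} carry side boundary, producing a floating surface with $k\ge 1$ boundary circles, and the resulting constraint $\sum_j (q_\gamma)_j\,\beta_{k,g+j}=\sum_j (q_\gamma)_j\,\alpha_{k-1,g+j}=0$ fails in general because $q_\gamma$ is the annihilator of $\gamma$, not of $\alpha$. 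Your intuition that ``the decomposition (\ref{eq_alpha_bg}) shows the two families of relations do not interact'' is exactly where the argument breaks: the two families \emph{do} interact through closures that change a no-side-boundary component into a side-boundary one. What actually works is to exhibit explicit elements of $I_\beta$: for instance, if $q_\alpha(T_1), q'_\alpha(T_2)$ are the minimal polynomials for $\alpha$ and $q_\gamma(T_2)$ for $\gamma$, then $T_1\,q_\alpha(T_1)$ and $q'_\alpha(T_2)\,q_\gamma(T_2)$ both lie in $I_\beta$ (check $k\ge 1$ using the identity $\beta\circ T_1 = \alpha$ and $k=0$ using that any multiple of $q_\gamma$ annihilates $\gamma$), and these already cut out a finite-codimension ideal; one then reruns the Proposition~\ref{prop_recog_alpha} argument with $I_\beta$.

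There is also a minor imprecision in the forward direction: the full subcategory of $\SS$ on objects $(0,n_2)$ is \emph{not} $\Cob_2$ with evaluation $\gamma$, since its morphisms can carry side boundary and its floating evaluations involve $\alpha$ as well as $\gamma$. The conclusion that $\gamma$ is recognizable is still correct, but should be extracted differently — e.g.\ from the finite-dimensionality of $\Hom_{\SS_\beta}(\one,(0,1))$ applied to the family $\undc_2^g\iota'$ and the trace pairing against genus-$g'$ caps, which directly gives a linear recurrence on $\gamma$.
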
 

It is easy to see  that series $\beta$ is recognizable iff hom  spaces 
\begin{equation*}
\Hom(\one,(1,0)) \ \  \mathrm{and}  \ \ \Hom(\one,(0,1))
\end{equation*}
in $\SS_{\beta}$  are finite-dimensional. These are the hom spaces from the empty  1-manifold $W(0,0)$ (representing the unit object $\one$)  to an  interval $W(1,0)$ and a circle $W(0,1)$, respectively. Necessity of this condition is obvious. Vice versa, if these homs are finite-dimensional, by analogy  with the proof  of  Proposition~\ref{prop_recog_alpha}, there are skein relations allowing to reduce some large number of handles (respectively, holes) on a connected component  to  a  linear  combination  of otherwise identical cobordisms but with fewer handles (respectively holes).  The  rest of the proof of Proposition~\ref{prop_recog_beta} follows  that of Proposition~\ref{prop_recog_alpha}. 

$\square$

\begin{cor}
   Series $\beta$  is recognizable iff the corresponding  series  $\alpha$ and  $\gamma$  are both recognizable. 
\end{cor}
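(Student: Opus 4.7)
The plan is to reduce the statement to a direct manipulation of rational expressions, using Proposition~\ref{prop_recog_beta} to characterize recognizability of $\beta$ and Proposition~\ref{prop_recog_alpha} for that of $\alpha$, while recognizability of $\gamma$ means that $Z_{\gamma}(T_2)$ is a ratio $R(T_2)/S(T_2)$ of two one-variable polynomials in $T_2$ (the analogous one-variable rationality condition). The bridge between the two sides of the equivalence is the identity (\ref{eq_alpha_bg}), namely
\begin{equation*}
\tZ_{\beta}(T_1,T_2) \ = \ T_1 Z_{\alpha}(T_1,T_2) + Z_{\gamma}(T_2).
\end{equation*}

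For the forward implication, assume $\alpha$ and $\gamma$ are recognizable, so that $Z_{\alpha}(T_1,T_2) = P(T_1,T_2)/(Q_1(T_1)Q_2(T_2))$ and $Z_{\gamma}(T_2) = R(T_2)/S(T_2)$. Placing the two summands above over a common denominator yields
\begin{equation*}
\tZ_{\beta}(T_1,T_2) \ = \ \frac{T_1 P(T_1,T_2) S(T_2) + R(T_2) Q_1(T_1) Q_2(T_2)}{Q_1(T_1)\, Q_2(T_2) S(T_2)},
\end{equation*}
which is of the shape (\ref{eq_recog_10}) with $\wtQ_1(T_1) = Q_1(T_1)$ and $\wtQ_2(T_2) = Q_2(T_2) S(T_2)$. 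By Proposition~\ref{prop_recog_beta}, $\beta$ is recognizable.

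For the reverse implication, assume $\beta$ is recognizable, so $\tZ_{\beta} = \wtP/(\wtQ_1 \wtQ_2)$ with $\wtQ_1(0) \neq 0$ and $\wtQ_2(0) \neq 0$. Setting $T_1 = 0$ gives
\begin{equation*}
Z_{\gamma}(T_2) \ = \ \tZ_{\beta}(0,T_2) \ = \ \frac{\wtP(0,T_2)}{\wtQ_1(0)\, \wtQ_2(T_2)},
\end{equation*}
which exhibits $Z_\gamma$ as a ratio of polynomials in $T_2$, so $\gamma$ is recognizable. For $\alpha$, we use $Z_{\alpha}(T_1,T_2) = (\tZ_{\beta}(T_1,T_2) - \tZ_{\beta}(0,T_2))/T_1$ and compute
\begin{equation*}
Z_{\alpha}(T_1,T_2) \ = \ \frac{\wtP(T_1,T_2)\wtQ_1(0) - \wtP(0,T_2)\wtQ_1(T_1)}{T_1\,\wtQ_1(0)\,\wtQ_1(T_1)\wtQ_2(T_2)}.
\end{equation*}
The key observation is that the numerator, as a polynomial in $T_1,T_2$, vanishes at $T_1 = 0$, hence is divisible by $T_1$. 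Writing
\begin{equation*}
P(T_1,T_2) \ := \ \frac{\wtP(T_1,T_2)\wtQ_1(0) - \wtP(0,T_2)\wtQ_1(T_1)}{T_1}
\end{equation*}
as an honest polynomial and absorbing the nonzero constant $\wtQ_1(0)$ into either factor of the denominator, we obtain $Z_{\alpha} = P/(\wtQ_1(T_1)\wtQ_2(T_2))$, which is of the form (\ref{eq_recog}). By Proposition~\ref{prop_recog_alpha}, $\alpha$ is recognizable.

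The only delicate point is verifying that the numerator in the formula for $Z_{\alpha}$ is genuinely divisible by $T_1$ as a polynomial and that the quotient is again a polynomial, but this is immediate from the computation above since the constant term in $T_1$ cancels identically. Everything else is formal manipulation of the identity (\ref{eq_alpha_bg}), so I expect no substantive obstacle.
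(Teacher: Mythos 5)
Your proof is correct and follows essentially the same route as the paper: both directions reduce to the identity $\tZ_{\beta}(T_1,T_2) = T_1 Z_{\alpha}(T_1,T_2) + Z_{\gamma}(T_2)$ and the two rational-form characterizations (Propositions~\ref{prop_recog_alpha} and~\ref{prop_recog_beta}), with the forward implication via common denominators (the paper makes this explicit in the remark just after the corollary) and the reverse via the specialization $T_1=0$ and the difference quotient. Your check that the numerator of $Z_{\alpha}$ is divisible by $T_1$ in $\kk[T_1,T_2]$ is the only point that needs saying, and you say it; the argument is sound.
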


Note that, when $\alpha$ and $\gamma$ are   recognizable, their rational function  presentation  may have very different  denominators, 
\begin{equation*}
    Z_{\alpha}(T_1,T_2)=\frac{P(T_1,T_2)}{Q_1(T_1)Q_2(T_2)}, \  \  Z_{\gamma}(T_2) = \frac{P_{\gamma}(T_2)}{Q_{\gamma}(T_2)},
\end{equation*}
so that 
\begin{eqnarray*}
    \tZ_{\beta}(T_1,T_2) & = & \frac{T_1 P(T_1,T_2)}{Q_1(T_1)Q_2(T_2)} + \frac{P_{\gamma}(T_2)}{Q_{\gamma}(T_2)} \\
    &  = & \frac{T_1 P(T_1,T_2)Q_{\gamma}(T_2)+Q_1(T_1)Q_2(T_2)P_{\gamma}(T_2)}{Q_1(T_1)Q_2(T_2)Q_{\gamma}(T_2)}.
\end{eqnarray*}
For generic polynomials, there are  no cancellations and  \begin{equation*}
    \wtQ_1(T_1) = Q_1(T_1), \ \ \wtQ_2(T_2) = Q_2(T_2)Q_{\gamma}(T_2)
\end{equation*}
are  the denominators in the minimal rational presentation (\ref{eq_recog_10}) for $\tZ_{\beta}$. 

\vspace{0.1in} 

For recognizable $\beta$, the state spaces 
\begin{equation*}
A_{\beta}(1,0) := \Hom_{\SS_{\beta}}(\one,(1,0)), \  \ 
A_{\beta}(0,1) := \Hom_{\SS_{\beta}}(\one,(0,1)),
\end{equation*} 
of homs from the unit object $\one=(0,0)$ to the interval and the circle objects, respectively, are both commutative Frobenius algebras. Annuli, viewed as morphisms between $(1,0)$ and $(0,1)$, see Figure~\ref{fig5_1}, give linear maps
\begin{equation*}
\delta_0  \ : \ A_{\beta}(1,0) \lra A_{\beta}(0,1), \ \ \ \ 
\delta_1  \ : \ A_{\beta}(0,1) \lra A_{\beta}(1,0)
\end{equation*} 
between  the underlying vector  spaces.  

\begin{figure}[h]
\psfrag{d0}{$\delta_0$}
\psfrag{d1}{$\delta_1$}
\begin{center}
\includegraphics[scale=0.75]{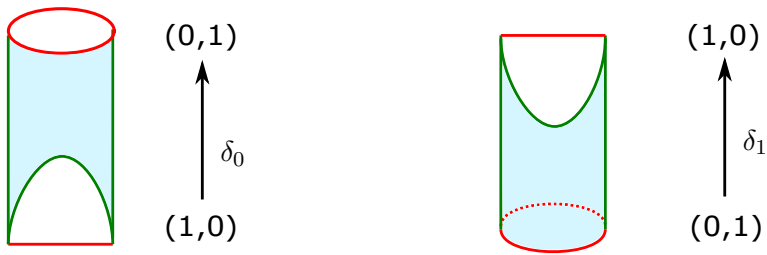}
\caption{Maps $\delta_0$ and $\delta_1$. }
\label{fig5_1}
\end{center}
\end{figure}

Consider the \emph{hole} and \emph{handle} endomorphisms $b_1,b_2$ of the interval and $c_1,c_2$ of the circle, respectively, in Figure~\ref{fig5_2} top. 

\begin{figure}[h]
\psfrag{b1}{$b_1$}
\psfrag{b2}{$b_2$}
\psfrag{c1}{$c_1$}
\psfrag{c2}{$c_2$}
\psfrag{ub1}{$\underline{b}_1$}
\psfrag{ub2}{$\underline{b}_2$}
\psfrag{uc1}{$\underline{c}_1$}
\psfrag{uc2}{$\underline{c}_2$}
\begin{center}
\includegraphics[scale=0.65]{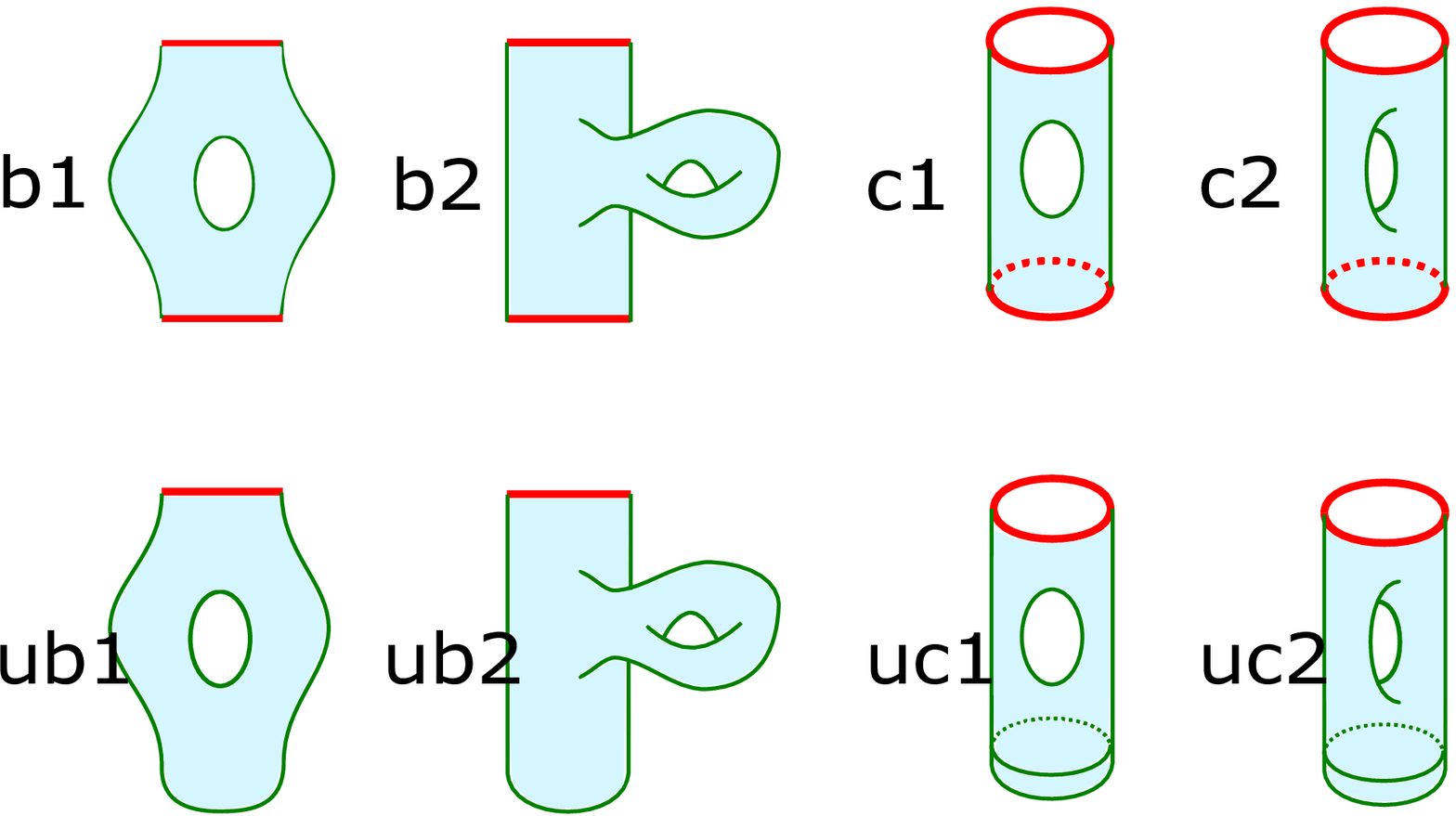}
\caption{Endomorphisms  $b_1,b_2$ of the interval, endomorphisms $c_1,c_2$ of the circle and corrresponding elements $\undb_1,\undb_2$ of $A_{\beta}(1,0)=\Hom(\one,(1,0))$ and elements $\undc_1,\undc_2 \in  
A_{\beta}(0,1)=\Hom(\one,(0,1))$.}
\label{fig5_2}
\end{center}
\end{figure}

Multiplications in algebras $A_{\beta}(1,0)$ and $A_{\beta}(0,1)$ are given by pants and flat pants cobordisms, see Figure~\ref{fig5_3}, where the cobordisms for the unit and trace morphisms on 
$A_{\beta}(1,0)$ and $A_{\beta}(0,1)$ are shown  as well. 

\begin{figure}[h]
\psfrag{M}{$m$}
\psfrag{i}{$\iota$}
\psfrag{E}{$\epsilon$}
\psfrag{M1}{$m^\prime$}
\psfrag{i1}{$\iota^\prime$}
\psfrag{E1}{$\epsilon^\prime$}
\begin{center}
\includegraphics[scale=0.65]{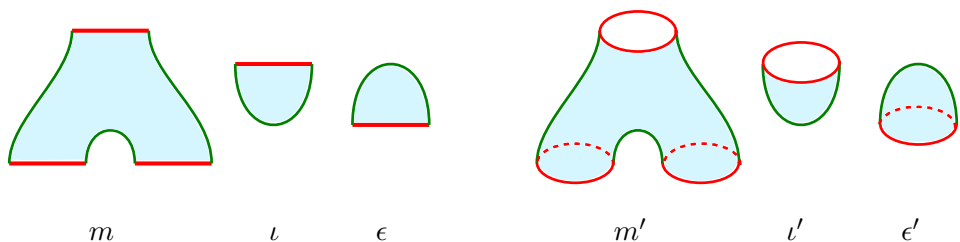}
\caption{Flat pants and pants cobordisms, together with the other structure maps $\iota,\epsilon$ and $\iota',\epsilon'$  (units $\iota,\iota'$ and counits $\epsilon,\epsilon'$) of commutative  Frobenius algebras $A_{\beta}(1,0)$ and $A_{\beta}(0,1)$. }
\label{fig5_3}
\end{center}
\end{figure}

Take endomorphisms $b_1,b_2,c_1,c_2$ of the interval and circle and cap them off at the  bottom with the unit morphisms $\iota$ and $\iota'$ for the interval and circle (see Figure~\ref{fig5_3}) to get elements $\undb_1=b_1\iota,\undb_2=b_2\iota$ in $A_{\beta}(1,0)$ and  elements $\undc_1=c_1\iota', \undc_2 = c_2\iota'$ in $A_{\beta}(0,1)$, shown in Figure~\ref{fig5_2}.

The analogue of Proposition~\ref{prop_y} holds in $\SS$, and the  ``interval'' Frobenius algebra $A_{\beta}(1,0)$ is generated by commuting elements  $\undb_1,\undb_2$ (the hole and handle elements). Likewise,  the  ``circle'' Frobenius algebra $A_{\beta}(0,1)$ is generated by commuting  hole and  handle  elements $\undc_1$ and $\undc_2$. 

Endomorphisms $b_1,b_3$ of the interval in the category $\SS$ are different  (endomorphism $b_3$ is also shown in Figure~\ref{fig2_2}), but they induce the same map  on $A_{\beta}(1,0)$, see Figure~\ref{fig5_4}. There, $x\in A_{\beta}(1,0)$  can be written as a linear combination of monomials $\undb_1^n\undb_2^m$, with $b_3$ acting  by 
\begin{equation*}
b_3 \undb_1^n\undb_2^m=\undb_1^{n+1}\undb_2^m = b_1\undb_1^n\undb_2^m. 
\end{equation*}

\begin{figure}[h]
\psfrag{x}{$x$}
\psfrag{b3}{$b_3$}
\psfrag{b1}{$b_1$}
\begin{center}
\includegraphics[scale=0.75]{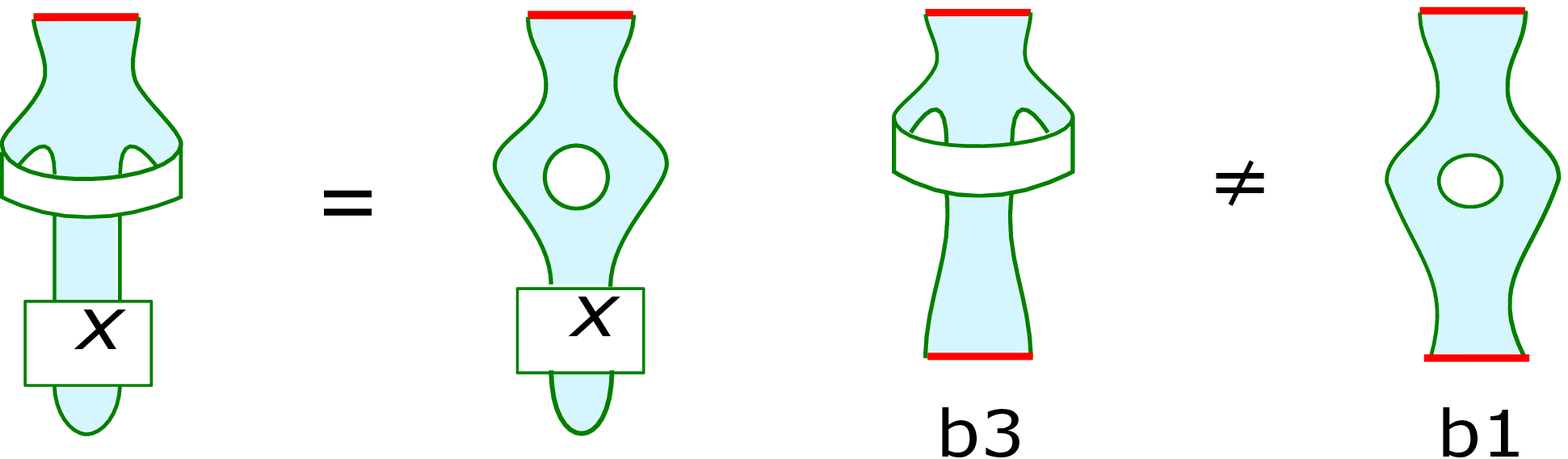}
\caption{$b_3x=b_1x$ for any $x\in A_{\beta}(1,0)$. $b_3\not= b_1$ as $\End((1,0))$ in $\SS$ (and in $\SS_{\beta}$, in general).}
\label{fig5_4}
\end{center}
\end{figure}

Trace maps 
\begin{equation} \label{eq_traces}
\epsilon \ : \  A_{\beta}(0,1)\lra \kk, \ \ \ \ 
\epsilon' \ : \  A_{\beta}(1,0)\lra \kk,
\end{equation} 
given by capping off the interval with a disk, respectively the circle with a cap, turn these two commutative algebras into Frobenius algebras (for recognizable $\beta$).

Compositions of $\delta_0$ and $\delta_1$ are endomorphisms of the  interval and the circle in $\SS$ (and in  $\SS_{\beta}$) and satisfy
\begin{eqnarray*}
    \delta_1\delta_0 & = &  b_3, \ \ \ \ \,
    \delta_0\delta_1 \  = \   c_1, \\
    \delta_1 c_1 & = & b_1 \delta_1, \ \ 
     \delta_1 c_2 \  = \  b_2 \delta_1, \\
     \delta_0 b_1  & = & c_1 \delta_0, \ \ 
    \delta_0 b_2  \ =  \ c_2 \delta_0.
\end{eqnarray*}

In particular, maps $\delta_0,\delta_1$ intertwine the hole endomorphisms $b_1,c_1$ of the interval and the  circle. They  also intertwine the handle endomorphisms $b_2,c_2$ of the  interval and  the  circle.  

Their two compositions produce the hole endomorphisms of the interval and the circle. 

The map $\delta_1$ is a surjective unital homomorphism of commutative algebras, while the map $\delta_0$ is an  injective homomorphism of cocommutative coalgebras, with comultiplications given by the dual of the multiplications on  these Frobenius algebras. In particular, $\delta_0$ respects  traces, in  the sense  that $\epsilon'\delta_0 = \epsilon.$

\vspace{0.1in}
A 
recognizable power series $\beta$ is encoded by a commutative  Frobenius algebra (the state space of  a circle) $A_{\beta}(0,1)$ with generators $\undc_1,\undc_2$ and non-degenerate trace map $\epsilon'$ such that 
\begin{equation} \label{eq_beta_epsilon}
    \beta_{\ell,g} = \epsilon'(\undc_1^{\ell}\undc_2^g), \ \  \ell,g\in \Z_+.
\end{equation}
Further unwrapping this data, to a 
recognizable power series $\beta$ we can associate
\begin{itemize}
\item 
Two commutative Frobenius algebras $A(1,0)=A_{\beta}(1,0)$ and $A(0,1)=A_{\beta}(0,1)$ with generators $\undb_1,\undb_2$ and $\undc_1,\undc_2$, respectively (hole and handle elements). 
\item
Non-degenerate traces $\epsilon$ and $\epsilon'$ as in (\ref{eq_traces}), subject to (\ref{eq_beta_epsilon})  and 
\begin{equation*} 
    \beta_{\ell+1,g} = \epsilon(\undb_1^{\ell}\undb_2^g), \ \  \ell,g\in \Z_+.
\end{equation*}
\item Linear maps $\delta_0,\delta_1$:
\begin{center}
\begin{tikzcd}
A_{\beta}(1,0)\arrow[r,shift left, "\delta_0" ]
    &
    A_{\beta}(0,1) \arrow[l,shift left,"\delta_1"]
\end{tikzcd}
\end{center} 
that  intertwine the action of handle elements $\undb_2$ and $\undc_2$. The  hole elements are given by   
\begin{equation*}
\undb_1=\delta_1\delta_0(1), \ \  \ 
\undc_1=\delta_0  \delta_1(1).  
\end{equation*} 
\item $\delta_1$  is a surjective unital homomorphism  of commutative algebras. 
\end{itemize}

\vspace{0.1in} 

The reader may want  to constrast the data coming from a recognizable series $\beta$  as above, with both algebras $A_{\beta}(0,1)$ and  $A_{\beta}(1,0)$ commutative Frobenius, with that given by a 2-dimensional TQFT with corners~\cite{MS,LP,C,SP} where the Frobenius algebra $B$ associated  to the interval  is not necessarily commutative and the algebra associated to the circle is  related to the center of  $B$. 

\vspace{0.1in} 

To a  recognizable series $\beta$ there is associated 
a  finite codimension ideal $I_{\beta}\subset \kk[T_1,T_2]$ describing relations on the hole and handle endomorphisms along any component of a surface. Starting with the \emph{viewable} category $\vSS_{\beta}$, described earlier, where floating components are evaluated via $\beta$, we impose relations in $I_{\beta}$ on hole and handle endomorphisms  along any component. The resulting category, denoted $\sSS_{\beta}$ (the \emph{skein} category) has finite-dimensional hom spaces. 

From the skein category we can  pass to the already defined gligible quotient $\SS_{\beta}$ by  taking the quotient of  $\sSS_{\beta}$ by the  ideal of  negligible  morphisms. This  ideal comes from the trace on $\sSS_{\beta}$ or, equivalently, from  the bilinear form given by pairing  of cobordisms. 

Taking the additive Karoubi closure of $\sSS_{\beta}$ results in the Deligne category $\dSS_{\beta}$. 

Taking the  quotient of $\dSS_{\beta}$ by the ideal of negligible morphisms produces the category  $\udSS_{\beta}$. Alternatively, this category is  equivalent to the  additive  Karoubi closure of $\SS_{\beta}$, and the square below  is commutative. 

The  following diagram summarizes these categories and functors (compare with (\ref{eq_seq_cd}), (\ref{eq_words}), and~\cite{KS}).  

\begin{equation} \label{eq_seq_cd_5}
\begin{CD}
\SS  @>>> \kk\SS @>>> \vSS_{\beta} @>>> \sSS_{\beta} @>>>  \dSS_{\beta} \\
@.   @. @.   @VVV   @VVV  \\    
 @.  @.   @.  \SS_{\beta}  @>>>  \udSS_{\beta}
\end{CD}
\end{equation}

Each  of the  four  categories in the vertices of the commutative square has finite-dimensional hom spaces between its objects. 


\subsection{Coloring side boundaries of cobordisms}  

$\quad$

\vspace{0.1in} 

Fix a  natural  number $r\ge 1$ and  consider a modification $\tfsr$ of  the category $\tfs$ where side  boundaries of cobordisms are colored by numbers from $1$ to $r$.  Let $\N_r= \{1,\dots,  r\}$ be the set of colors. A morphisms in  $\tfsr$ is  a tf-surface $x$, up to rel boundary diffeomorphisms, such that  each  side (or vertical) boundary component of $x$ carries a label  from  $\N_r$. Coloring of  $x$ induces a coloring on the set of  corners of  $x$, that  is, on endpoints of  the one-manifold $\partial_h x$ which is the horizontal boundary of $x$, see Figure~\ref{fig6_1}.

\begin{figure}[h]
\begin{center}
\includegraphics[scale=0.75]{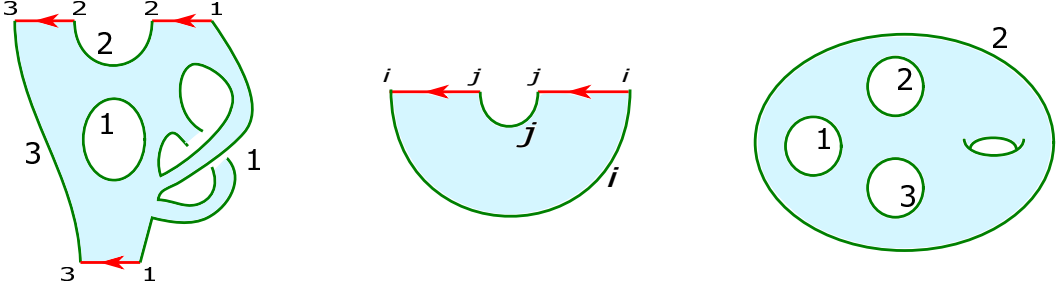}
\caption{Left: A morphism in $\tfsr$ from the colored interval $(3,1)$ to the union $(3,2)\sqcup (2,1)$ of two colored  intervals. Middle: the dual of object $(i,j)$ is  the  object   $(j,i)$. Right: a connected  floating component of genus $1$ and the sequence $(1,2,1)$. It  has one  boundary circle of colors $1$ and $3$ each and two circles of color $2$.}
\label{fig6_1}
\end{center}
\end{figure}
Consequently, each boundary  interval $I$ of $x$, being oriented, gets an induced ordered sequence $(r_1(I),r_0(I))$ of two colors. We consider a skeletal version of $\tfsr$, choosing only one object for  each isomorphism  class. An object $a$ then  is determined  by  the  $r\times r$ matrix $M=M(a)$ with the $(i,j)$-entry the number  of intervals in $a$  colored $(i,j)$.    

Thus, objects $a$ are described by  $r\times r$ matrices of non-negative integers counting number  of colored intervals in $a$. We can call these objects \emph{$r$-colored} or \emph{$r$-labelled} thin one-manifolds or \emph{$r$-boundary colored} thin one-manifolds. An object  can also be described by  a list of  colored intervals in it.

This  skeletal version is still  rigid tensor, with the obvious tensor product.
The unit object $\one = \emptyset$ corresponds to  the matrix of size $0\times 0$.

The notion of a connected component, floating and viewable components of a morphism are  defined as for  $\tfs$. 
Commutative  monoid $\End(\emptyset)$ of endomorphisms  of the empty  one-manifold $\emptyset$ is a  free  abelian monoid  generated  by diffeomorphism classes of  connected floating  $r$-colored tf-surfaces. Such a surface $S$ is classified by its genus $g\ge 0$ and a sequence of  $r$ non-negative  integers $\undn=(n_1,\dots,n_r)$, where  $n_i$ is the  number  of boundary components of  color $i$.
Denote such component by $S_{\undn,g}$. Figure~\ref{fig6_1} right shows the component  $S_{(1,2,1),1}$. 

For each color $i\le r$  there is an embedding of  $\tfs$ into $\tfsr$ by coloring each side boundary of morphisms in $\tfs$ by $i$. Each horizontal interval is then an $(i,i)$-interval. 

For a  morphisms between  two objects in $\tfsr$ to exist, there must exist  a suitable matching between the colorings of their endpoints. For instance, there are no morphisms from the empty object $\emptyset$   to $(i,j)$ interval if $i\not=j$, since the $i$ and $j$ endpoints must belong to  the same side interval and have the same coloring. There are morphisms from $\emptyset$ to $(i,j)\sqcup (j,i)$ but no morphisms from $\emptyset$  to $(i,j)\sqcup  (i,j)$  for $i\not= j$, since matching the two $i$'s via a side interval is not possible with our orientation setup. 

As usual,  denote  by $\kk\tfsr$ the $\kk$-linear version of $\tfsr$, with the same  objects as $\tfsr$ and morphisms --  $kk$-linear combinations of morphisms in $\tfsr.$

\vspace{0.1in} 

The  construction of  evaluation categories and  recognizable (or rational) series can be extended from $\tfs$ to  $\tfsr$ in a direct   way. 

An  evaluation $\alpha$  is a multiplicative homomorphism  from the  monoid $\End(\emptyset)$ of floating colored tf-surfaces to a field $\kk.$ Such  an  evaluation  is determined  by its  values on connected floating surfaces $S_{\undn,g}$. Let 
\begin{equation}
    Z_{\alpha}(T_0,\dots, T_{r})=\sum_{\undn,g} \alpha_{\undn,g}\, T_0^g T^{\undn}, \ \ \alpha_{\undn,g}\in \kk
\end{equation}
be a formal power series  in $r+1$ variables, with 
\begin{equation*}
    T^{\undn}\ := \ T_1^{n_1}\dots T_r^{n_r}, \ \ 
    \undn=(n_1,\dots, n_r), \ \ n_i\in \Z_+
\end{equation*}
where  $T^{\undn}$ is a monomial in $T_1,\dots, T_r$. Thus, $T_0$ is the \emph{genus} variable and $T_1,\dots, T_r$ are \emph{color} variables. Coefficient $\alpha_{\undn,g}$ at $T_0^gT_1^{n_1}\dots T_r^{n_r}$ encodes the evaluation of floating connected surface $S_{\undn,g}$. 

Since each component of a tf-surface has non-empty boundary, coefficients at $T_0^g$, with  $\undn=\mathbf{0}=(0,\dots, 0)$ do not  appear in this formal sum.  We  set them to zero and  extend the sum to these indices by  setting
\begin{equation}\label{eq_is_zero}
    \alpha_{\mathbf{0},g}  =0,\ \ g\in \Z_+.
\end{equation}
Thus, our  power series has the property that 
\begin{equation}\label{eq_Z_satisfy}
    Z_{\alpha}(T_0,0,\dots, 0)= 0 . 
\end{equation}
We  can also view $\alpha$ as a linear map of vector spaces
\begin{equation*}
    \alpha \ :  \ \kk[T_0,\dots, T_r]\lra \kk
\end{equation*}
subject to condition (\ref{eq_is_zero}), that is, $\alpha(T_0^g)=0$,  $g\ge 0$.  

\vspace{0.1in} 

To $\alpha$ we assign the category
$\vtfsr_{\alpha}$,  the quotient of $\kk\tfsr$  by the relations that a connected floating component diffeomorphic to  $S_{\undn,g}$ evaluates  to $\alpha_{\undn,g}$. This is the category of \emph{viewable} $r$-colored tf-surfaces with the $\alpha$-evaluation. 

\vspace{0.1in} 

Categor $\vtfsr_{\alpha}$ carries a natural trace form given on an endomorphism $x$ of an  object $a$ by closing $x$  into a floating surface $\widehat{x}$  and evaluating this  surface via  $\alpha$,  see  Figure~\ref{fig3_3}, where  now  side  boundaries are $r$-colored. If $x$ is not a single cobordism but a linear combination, we use linearity of  the trace to define $\tral(x)=\alpha(\widehat{x})$.

Denote by  $J_{\alpha}$ the two-sided ideal of negligible morphisms in $\vtfsr_{\alpha}$ for this trace map. Define the \emph{gligible} cobordism category $\tfsr_{\alpha}$ as the  quotient of $\vtfsr_{\alpha}$  by the ideal $J_{\alpha}$: 
\begin{equation*}
    \tfsr_{\alpha}  \ := \ \vtfsr_{\alpha}/J_{\alpha}.
\end{equation*}

We say that evaluation $\alpha$ is \emph{rational} or \emph{recognizable} if category $\tfsr_{\alpha}$ has  finite-dimensional hom spaces. 

\begin{prop} \label{prop_3_prop}
The following properties  are equivalent.  
\begin{enumerate}
    \item $\alpha$ is  recognizable.
    \item Hom spaces $\Hom(\emptyset,(i,i))$  from the empty one-manifold to the $(i,i)$-interval are finite-dimensional in $\tfsr_{\alpha}$ for all $i=1,\dots, r$.
    \item Power series $Z_{\alpha}$ has the form
    \begin{equation*}
        Z_{\alpha}(T_0,\dots, T_r) = \frac{P(T_0,\dots, T_r)}{Q_0(T_0)Q_1(T_1)\dots Q_r(T_r)},
    \end{equation*}
    where $P$ is a polynomial in $r+1$ variables and  $Q_0,\dots, Q_r$  are one-variable polynomials, with $Q_i(0)\not= 0$, $i=0,\dots,r$. 
\end{enumerate}
\end{prop}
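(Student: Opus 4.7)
The plan is to extend the two-variable argument of Proposition~\ref{prop_recog_alpha} to $r+1$ variables, proving the cyclic chain (1)$\Rightarrow$(2)$\Rightarrow$(3)$\Rightarrow$(1).

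The implication (1)$\Rightarrow$(2) is immediate. For (3)$\Rightarrow$(1), expansion of the rational presentation yields linear recurrences on the coefficients $\alpha_{\undn,g}$ in each of the $r+1$ variable directions. These translate into skein-type relations: suitable polynomials in the handle endomorphism $b_2$ and in the color-$j$ hole endomorphisms $b_1^{(j)}$, applied along any connected component of a cobordism, produce negligible morphisms in $\vtfsr_{\alpha}$. By the direct analogue of the argument in Section~\ref{subset_skein}, every morphism in $\vtfsr_{\alpha}$ reduces modulo these relations to a finite linear combination of minimal cobordisms decorated on each component by monomials from the finite-dimensional quotient $\kk[T_0,T_1,\ldots,T_r]/(Q_0(T_0),\ldots,Q_r(T_r))$. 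Hom spaces in the gligible quotient $\tfsr_{\alpha}$ are then finite-dimensional.

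The core work is (2)$\Rightarrow$(3). Fix $i\in\{1,\ldots,r\}$. A viewable cobordism from $\emptyset$ to the interval $(i,i)$ is connected, with a unique boundary circle hosting the horizontal interval (flanked by one side arc of color $i$), plus $n_j$ additional monochromatic circles of color $j$ for each $j=1,\ldots,r$. Denote such a cobordism
\[
x_{g,\undn} \;=\; b_2^g\,(b_1^{(1)})^{n_1}\cdots (b_1^{(r)})^{n_r}\,\iota^{(i)},
\]
where $\iota^{(i)}$ is the disk cap on $(i,i)$. Gluing $x_{g_1,\undn^1}$ above $y_{g_2,\undn^2}\in\Hom((i,i),\emptyset)$ along the $(i,i)$-interval merges the two interval-carrying boundary circles (each with a color-$i$ side arc) into a single circle of color $i$, producing a connected floating surface of genus $g_1+g_2$ with color counts $\undn^1+\undn^2+e_i$, where $e_i$ denotes the $i$-th standard unit vector in $\Z_+^r$. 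The induced trace pairing is
\[
(x_{g_1,\undn^1},\,y_{g_2,\undn^2}) \;=\; \alpha_{\undn^1+\undn^2+e_i,\,g_1+g_2},
\]
precisely the Hankel pairing of the shifted series $\beta^{(i)}$ with $\beta^{(i)}_{\undn,g}:=\alpha_{\undn+e_i,g}$. Hence (2) is equivalent to the recognizability of each $\beta^{(i)}$ as a power series in $T_0,T_1,\ldots,T_r$.

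Applying the $(r+1)$-variable version of Proposition~\ref{prop_recog_alpha} (whose proof applies to any number of variables, as noted there) to each $\beta^{(i)}$, one obtains for every $(i,j)$ with $j\in\{0,\ldots,r\}$ a nonzero polynomial $R_{i,j}(T_j)$ giving a linear recurrence for $\beta^{(i)}$ in the $T_j$-direction. Re-indexing via $\undm=\undn+e_i$ translates these into recurrences on $\alpha_{\undm,g}$ valid whenever $m_i\geq 1$. Setting $Q_j(T_j):=\prod_i R_{i,j}(T_j)$ yields a uniform $T_j$-recurrence on $\alpha_{\undm,g}$ for every $\undm\neq \mathbf{0}$, while the boundary case $\undm=\mathbf{0}$ is handled using the normalization $\alpha_{\mathbf{0},g}=0$ from~(\ref{eq_is_zero}): the $l=0$ term vanishes and the remaining sum is covered by the $\beta^{(j)}$-recurrence at $\undm'=\mathbf{0}$, after possibly multiplying $Q_j$ by $T_j$. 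Executing the reconstruction step of Proposition~\ref{prop_recog_alpha} successively in each of the $r+1$ directions, the product $Z_{\alpha}\cdot\widehat{Q}_0(T_0)\cdots\widehat{Q}_r(T_r)$ is a polynomial, where $\widehat{Q}_j(T_j)=T_j^{\deg Q_j}Q_j(T_j^{-1})$ automatically satisfies $\widehat{Q}_j(0)\neq 0$, yielding presentation~(3). The principal obstacle is precisely this assembly step: merging the $i$-indexed recurrences on the shifted series $\beta^{(i)}$ into a single recurrence for the full series $\alpha$ covering every multi-index including $\undm=\mathbf{0}$; this depends crucially on the normalization $\alpha_{\mathbf{0},g}=0$, without which $\{\beta^{(i)}\}_{i=1}^r$ would fail to determine $\alpha$ on the empty-color slice $T_1=\cdots=T_r=0$ and no common denominator of the required form would suffice.
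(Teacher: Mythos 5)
Your proof is correct and carries out exactly the generalization the paper gestures at: the paper's proof of Proposition~\ref{prop_3_prop} is the single sentence ``Proof of Proposition~\ref{prop_recog_alpha} carries directly to the case of arbitrary $r$,'' and you have supplied the missing content. The genuine work you add — and which the paper does not spell out — is the identification of the trace pairing on $\Hom(\emptyset,(i,i))$ with the Hankel pairing of the shifted series $\beta^{(i)}_{\undn,g}=\alpha_{\undn+e_i,g}$, and the subsequent assembly step merging the $r$ families of recurrences (valid only when $m_i\geq 1$) into a uniform recurrence for $\alpha$ covering $\undm=\mathbf{0}$ via the normalization~(\ref{eq_is_zero}). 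That assembly step is precisely where the colored case differs from the $r=1$ case (where the off-by-one indexing convention $\alpha_{\ell,g}\leftrightarrow S_{\ell+1,g}$ makes the shift invisible), and your observation that without $\alpha_{\mathbf{0},g}=0$ the collection $\{\beta^{(i)}\}$ would not determine $\alpha$ on the empty-color slice is the key structural point. One small cleanup: after replacing $Q_j$ by $T_jQ_j$ for $j\geq 1$ to handle $\undm=\mathbf{0}$, the resulting denominator polynomial no longer satisfies $Q_j(0)\neq 0$; this is repaired exactly as you indicate by the reversal $\widehat{Q}_j$, whose constant term is the (nonzero) leading coefficient of $Q_j$, so the final presentation in~(3) does satisfy $\widehat{Q}_j(0)\neq 0$ as required.
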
 
Polynomials $Q_i$ can be normalized so that $Q_i(0)=1$ for all $i$. Power series $Z_{\alpha}$ also satisfies equation (\ref{eq_Z_satisfy}). 

\begin{proof}
The proof is essentially  the same as in $r=1$ case, when all side components carry the same color and  there is no  need to mention colors.  Proof of Proposition~\ref{prop_recog_alpha} carries directly to the case of arbitrary $r$. 
\end{proof}

Take any floating component $S$ and a monomial $T=T_0^g T_1^{n_1}\dots T_r^{n_r}$. Define $S(T)$ as the surface $S$ with additional $g$ handles and additional $n_i$ holes with boundary colored $i$,  for  $i=1,\dots, r.$

Given a linear combination $y  = \sum \mu_i T_i$ of monomials, define $S(y)=\sum \mu_i S(T_i)$ as the linear combination of corresponding floating surfaces. Evaluation $\alpha(S(y))$ is an element of the ground field $\kk$. 

\vspace{0.1in} 

Given $\alpha$, we can then define the syntactic ideal $I_{\alpha}\subset \kk[T_0,\dots, T_r]$. 
Namely, $y\in I_{\alpha}$ if $\alpha(S(y))=0$ for any floating $S$.

\begin{prop} $\alpha$ is recognizable iff the ideal $I_{\alpha}$ has finite codimension in $\kk[T_0,\dots, T_r]$. 
\end{prop}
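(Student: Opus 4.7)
The plan is to route both implications through the rational form of $Z_\alpha$ established in Proposition~\ref{prop_3_prop}. For a one-variable polynomial $Q(T) = \sum_k q_k T^k$ of degree $D$, let $\widetilde{Q}(T) := T^D Q(T^{-1}) = \sum_k q_{D-k} T^k$ denote its reversal.

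For the forward direction, I would assume $\alpha$ is recognizable, so that $Z_\alpha \cdot Q_0(T_0) \cdots Q_r(T_r) = P$ for some polynomial $P \in \kk[T_0,\dots,T_r]$. For each $i$, multiplying $Z_\alpha$ by only the factor $Q_i(T_i)$ yields $Z_\alpha Q_i(T_i) = P/\prod_{j\neq i}Q_j(T_j)$, a formal power series whose degree in $T_i$ is bounded by $d_i := \deg_{T_i} P$. Reading off the coefficient of $T_0^g T^\undn$ in this product and reversing indices produces, for each $i$, a recurrence of the form $\sum_{k} \widetilde{q}_{i,k}\,\alpha_{\undm + k\mathbf{e}_i,\,g} = 0$ valid once $m_i$ (respectively $g$, when $i=0$) exceeds a threshold depending only on $d_i$ and $\deg Q_i$. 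Unpacking the defining condition of $I_\alpha$ then shows that $T_i^{s_i}\widetilde{Q}_i(T_i) \in I_\alpha$ for a suitable shift $s_i := \max(0, d_i - \deg Q_i + 1)$. These $r+1$ polynomials generate an ideal whose quotient algebra is isomorphic to $\bigotimes_{i=0}^r \kk[T_i]/(T_i^{s_i}\widetilde{Q}_i(T_i))$, a finite-dimensional $\kk$-algebra; hence $I_\alpha$ contains an ideal of finite codimension, and so has finite codimension itself.

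For the converse, assume $I_\alpha$ has finite codimension. Then for each $i$, the class of $T_i$ in $\kk[T_0,\ldots,T_r]/I_\alpha$ satisfies a monic polynomial identity, yielding a monic $f_i(T_i) \in I_\alpha$. The membership $f_i(T_i) \in I_\alpha$ unpacks via the definition of the syntactic ideal to a linear recurrence: $\sum_k f_{0,k}\,\alpha_{\undm,\,g+k}=0$ for $i = 0$ and $\sum_k f_{i,k}\,\alpha_{\undm + k\mathbf{e}_i,\, g}=0$ for $i \geq 1$, valid for all $g \geq 0$ and all $\undm \neq \mathbf{0}$. Translating to generating functions, multiplying $Z_\alpha$ by $\widetilde{f}_0(T_0)$ produces a series whose coefficient of each monomial $T^\undm$ is a polynomial in $T_0$ of degree less than $\deg f_0$; a direct calculation confirms that the remaining recurrences are preserved under this multiplication, since the shift operators act on disjoint variables. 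Iterating for $i=0,1,\ldots,r$ yields that $Z_\alpha \cdot \widetilde{f}_0(T_0) \cdots \widetilde{f}_r(T_r)$ is bounded in each variable individually, hence a polynomial. Since each $f_i$ is monic, $\widetilde{f}_i(0) \neq 0$, and Proposition~\ref{prop_3_prop} then delivers the recognizability of $\alpha$.

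The main obstacle is the asymmetry in the definition of $I_\alpha$ created by the restriction $\undm \neq \mathbf{0}$. In the forward direction this forces the introduction of the shifts $T_i^{s_i}$ to bring the recurrences into the range where they were derived, while in the backward direction the condition is benign because the hypothesis $\alpha_{\mathbf{0},g}=0$ makes the $\undm = \mathbf{0}$ case of the recurrence hold automatically. A secondary technical point is verifying that the one-variable recurrences persist after multiplying $Z_\alpha$ by the other reversed polynomials $\widetilde{f}_j$; this works precisely because those operations act on disjoint variables and do not disturb the linear relations in the $i$-th direction, so the iterative argument yielding a polynomial from separate boundedness in each variable goes through cleanly.
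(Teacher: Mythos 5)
Your proof is correct, and it supplies an argument the paper leaves implicit: the paper states this proposition without proof, evidently relying on Proposition~\ref{prop_3_prop} (recognizability $\Leftrightarrow$ rational form of $Z_\alpha$) together with the proof ideas for Proposition~\ref{prop_recog_alpha}. Routing through Proposition~\ref{prop_3_prop} is the right move. Your backward direction---extracting monic $f_i(T_i)\in I_\alpha$ from finite codimension, reading off recurrences, and clearing denominators with the reversed polynomials $\widetilde{f}_i$---is the multivariable analogue of the argument given for Proposition~\ref{prop_recog_alpha}. Your forward direction (rational form $\Rightarrow$ $I_\alpha$ of finite codimension, via showing $T_i^{s_i}\widetilde{Q}_i(T_i)\in I_\alpha$) is the converse that the paper there explicitly leaves to the reader, and you handle it correctly.

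Two minor points. First, your diagnosis of the ``main obstacle'' is off: the shifts $T_i^{s_i}$ are not caused by the restriction $\undm\ne\mathbf{0}$ in the definition of $I_\alpha$. They are needed because the recurrence read off from the bound on the $T_i$-degree of $Z_\alpha Q_i(T_i)$ is only eventual, kicking in once that index exceeds $d_i$; this is standard for rational series, with or without the restriction. Indeed, a direct check shows $T_i^{s_i}\widetilde{Q}_i(T_i)$ lies in the unrestricted kernel $\{y:\alpha(zy)=0\ \text{for all }z\}$ as well. The $\undm\ne\mathbf{0}$ asymmetry only weakens the membership condition for $I_\alpha$, so it makes the forward direction easier, not harder. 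Second, your iterative argument in the backward direction can be replaced by a one-line degree count: $\prod_{j\ne i}\widetilde{f}_j(T_j)$ is free of $T_i$, so the degree in $T_i$ of $Z_\alpha\prod_j\widetilde{f}_j(T_j)$ equals that of $Z_\alpha\widetilde{f}_i(T_i)$, which you have already bounded by $\deg f_i$. This sidesteps the claim that ``the remaining recurrences are preserved,'' which is not quite literally true as stated (the recurrence of index $i$ applied at $\undm$ with $\undm_i=0$ need not survive multiplication by $\widetilde{f}_j$ for $j\ne i$); your argument still works because those indices never arise when extracting the $T_i$-degree bound, but the shortcut avoids the bookkeeping entirely.
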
 

Thus, for recognizable $\alpha$, one can define the \emph{skein} category $\stfsr_{\alpha}$ as the quotient of $\vtfsr_{\,\alpha}$ by the relations that inserting any $y\in I_{\alpha}$ into a cobordism is zero. 
Category  $\stfsr_{\alpha}$ has finite-dimensional hom spaces. It also has  the  ideal of negligible  morphisms, with the quotient  category isomorphic to $\tfsr_{\,\alpha}$. One can then  define the analogue of the Deligne category for $\stfsr_{\alpha}$ by taking its additive Karoubi closure and define the glibigle quotient of the latter. The resulting diagram of categories and functors  below  mirrors diagrams (\ref{eq_seq_cd}),  (\ref{eq_seq_cd_5}), and the  corresponding diagram in~\cite{KS}. The square is commutative. 

\begin{equation} \label{eq_seq_cd_11}
\begin{CD}
\tfsr  @>>> \kk\tfsr @>>> \vtfsr_{\,\alpha} @>>> \stfsr_{\,\alpha} @>>>  \dtfsr_{\,\alpha} \\
@.   @. @.   @VVV   @VVV  \\    
 @.  @.   @.  \tfsr_{\,\alpha}  @>>>  \udtfsr
\end{CD}
\end{equation}

\vspace{0.1in} 

Condition (\ref{eq_Z_satisfy})  on the  power series $Z_{\alpha}$ seems rather unnatural. It can be  removed by passing to the larger category, as in Section~\ref{subsec_adding}, where now  closed components are allowed. Objects of the new category that extends $\tfsr$ are disjoint  unions of oriented intervals (with endpoints colored by elements of $\N_r$) and circles.  Morphisms are two-dimensional oriented cobordisms between  these collections, with side boundary intervals and side circles colored by elements of  $\N_r$. In  the definition of evaluation $\alpha$ we can now omit condition   (\ref{eq_is_zero}) or, equivalently, restriction (\ref{eq_Z_satisfy}) on the power series $Z_{\alpha}$.

\vspace{0.1in} 

Definition and basic properties or recognizable series now  work as in the $\tfsr$ case. In the analogue of Proposition~\ref{prop_3_prop} for this modification, property (2) is  replaced by the condition that  the state  space of the circle is finite-dimensional (hom space $\Hom(\emptyset,\mathbb{S}^1)$ is finite-dimensional). This is  due to the surjection from the state space of the circle to that of the  interval $(i,i)$ induced by the map $\delta_1$  in Figure~\ref{fig5_2} with the side (vertical) interval colored $i$.  
It is straightforward to set up the analogue of the diagram (\ref{eq_seq_cd_11}) of categories and functors  for this case as well, for recognizable $\alpha$.





\begin{thebibliography}{alpha}
\raggedright

\bibitem[BHMV]{BHMV} C.~Blanchet, N.~Habegger, G.~Masbaum, and P.~Vogel, {\sl Topological quantum field theories derived from the Kauffman bracket}, Topology {\bf 34} 4 (1995), 883-927.  

\bibitem[C]{C} C.~Caprau, {\sl Twin TQFTs and  Frobenius algebras}, Journal of Mathematics, vol.~2013, Article ID 407068, 25 pages, \url{http://dx.doi.org/10.1155/2013/407068}.

\bibitem[CO]{CO} J.~Comes and V.~Ostrik, {\sl On  blocks of Deligne's category $\underline{Rep}(S_t)$},  Adv. in Math. {\bf 226}  (2011), 1331-1377, \href{https://arxiv.org/abs/0910.5695}{arXiv:0910.5695
}. 

\bibitem[D]{D} P.~Deligne, {\sl La cat\'egorie des repr\'esentations du  groupe sym\'etrique $S_t$, lorsque $t$ n'est  pas en entier naturel},  in 
Proceedings of the Int. Colloquium on  Alg. Groups  and  Homogeneous  Spaces,  Tata Inst. Fund. Res. Studies Math. Mumbai 2007, 209-273.  

\bibitem[EO]{EO} P.~Etingof  and V.~Ostrik,  {\sl On semisimplification of  tensor  categories}, arXiv:1801.04409.   

\bibitem[EGNO]{EGNO} P.~Etingof, S.~Gelaki, D.~Nikshych, and V.~Ostrik, {\sl Tensor categories}, Math. Surveys and  Monographs {\bf 205}, AMS 2015. 

\bibitem[F]{F} M.~Fliess, {\sl S\'eries reconnaissables, rationnelles et alg\'ebriques}, Bull. Sci. Math. {\bf 94} (1970), 231-239. 

\bibitem[Fr]{Fr} R.~Friedrich, {\sl Types, codes and TQFTs}, arXiv:1805.02286. 

\bibitem[FKNSWW]{FKNSWW} M.~H.~Freedman, A.~Kitaev, C.~Nayak, J.~K.~Slingerland, K.~Walker and Z.~Wang, {\sl Universal manifold pairings and positivity}, Geometry and Topology {\bf 9} (2005), 2303-2317. 

\bibitem[FM]{FM} E.~Fornasini, G.~Marchesini, {\sl State-space realization theory of two-dimensional filters}, IEEE Trans. Aut. Contr., Vol. AC-21, n.~4 (1976), 484-492. 

\bibitem[Ha]{Ha} M.~Hazewinkel, {\sl Cofree coalgebras and  multivariable recursiveness}, J. Pure  Applied Algebra {\bf 183} (2003), 61-103. 

\bibitem[H]{H} U.~Helmke, {\sl Linear dynamical systems and instantons in Yang-Mills theory}, IMA Jour. Math. Control \& Information {\bf 3} (1986), 151-166.

\bibitem[Kh1]{Kh1} M.~Khovanov, 
{\sl sl(3) link homology},  Alg. Geom. Top. {\bf 4} (2004), 1045–1081.

 \bibitem[Kh2]{Kh2} M.~Khovanov, {\sl
Universal construction of topological theories in two dimensions}, \href{https://arxiv.org/abs/2007.03361}{arXiv:2007.03361}. 
 
 \bibitem[KKO]{KKO} M.~Khovanov, Y.~Kononov,  and V.~Ostrik, In preparation. 

\bibitem[KS]{KS} M.~Khovanov and R.~Sazdanovic, {\sl Bilinear pairings on two-dimensional cobordisms and generalizations of the Deligne category}, \href{https://arxiv.org/abs/2007.11640}{arXiv:2007.11640}.

\bibitem[LP]{LP} A.~D.~Lauda  and  H.~Pfeiffer, {\sl Open–closed strings: Two-dimensional extended TQFTs and Frobenius algebras}, Topology and its Applications {\bf 155} (2008), 623-666. 


\bibitem[MiS]{MiS} E.~Miller and B.~Sturmfels, {\sl Combinatorial commutative algebra},  Graduate Texts in Mathematics {\bf 227}, Springer 2005. 

\bibitem[MS]{MS}  G.~W.~Moore and G.~Segal, {\sl D-branes and K-theory in  2D  topological field theory}, arXiv:hep-th/0609042. 

\bibitem[N]{N} H.~Nakajima, {\sl Lectures on Hilbert schemes of points on surfaces}, University Lecture Series {\bf 18}, AMS, 1999. 

\bibitem[RW]{RW} L.-H.~Robert and E.~Wagner, {\sl A closed formula for the evaluation of $\mathfrak{sl}_N$-foams}, to appear in Quantum Topology, \href{https://arxiv.org/abs/1702.04140}{arXiv:1702.04140}.

\bibitem[S]{S} E.~D.~Sontag, {\sl  A remark on bilinear systems and moduli  spaces of instantons}, Systems \& Control Letters {\bf 9} (1987), 361-367. 

\bibitem[SP]{SP} C.~Schommer-Pries, {\sl The classification of two-dimensional extended topological field theories}, \href{https://arxiv.org/abs/1112.1000}{arXiv:1112.1000}.

\end{thebibliography}
\end{document}